\date{\today}
\newtheoremstyle{named}%
    {}{}{\itshape}{}{\bfseries}{.}{.5em}{\thmnote{#3}}
\theoremstyle{named}
\crefname{equation}{}{}
\crefname{introcor}{Corollary}{Corollary}
\Crefname{figure}{Figure}{Figures}
\theoremstyle{plain}
\newtheorem{thm}{Theorem}[section]
\newtheorem{theorem}[thm]{Theorem}
\newtheorem*{thm*}{Theorem}
\newtheorem{corollary}[thm]{Corollary}
\newtheorem{prop}[thm]{Proposition}
\newtheorem{proposition}[thm]{Proposition}
\newtheorem{lemma}[thm]{Lemma}
\theoremstyle{definition}
\newtheorem{example}[thm]{Example}
\newtheorem{remark}[thm]{Remark}
\newtheorem{notn}[thm]{Notation}
\newenvironment{pf}{\begin{proof}}{\end{proof}}
\DeclareMathOperator{\Ho}{Ho}
\DeclareMathOperator{\CAlg}{CAlg}
\DeclareMathOperator{\hoco}{hocolim}
\DeclareMathOperator{\Id}{Id}
\DeclareMathOperator{\fib}{fib}
\DeclareMathOperator{\Mod}{\mathrm{Mod}}
\DeclareMathOperator{\Sp}{Sp}
\DeclareMathOperator{\Cl}{Cl}
\DeclareMathOperator{\hocolim}{hocolim}
\newcommand{\C}{\mathbb{C}}
\newcommand{\Z}{\mathbb{Z}}
\newcommand{\F}{\mathbb{F}}
\newcommand{\Q}{\mathbb{Q}}
\newcommand{\R}{\mathbb{R}}
\newcommand{\cF}{\mathcal{F}}
\newcommand{\cP}{\mathcal{P}}
\newcommand{\lra}[1]{\overset{#1}{\longrightarrow}}
\let\c@equation\c@thm
\numberwithin{equation}{section}
\DeclareMathOperator{\Set}{Set}
\DeclareMathOperator{\Gal}{Gal}
\DeclareMathOperator{\coker}{coker}
\newcommand{\ul}{\underline}
\newcommand{\rtarr}{\longrightarrow}
\newcommand{\xrtarr}{\xrightarrow}
\newcommand{\smsh}{\wedge}
\newcommand{\id}{\mathrm{id}}
\newcommand{\iso}{\cong}
\newcommand{\mf}[1]{\ul{#1}}
\newcommand{\mpi}{\mf{\pi}}
\newcommand{\longto}{\longrightarrow}%
\newcommand{\onto}{\twoheadrightarrow}%
\DeclareMathOperator{\Mack}{Mack}
\newcommand*{\da@rightarrow}{\mathchar"0\hexnumber@\symAMSa 4B }
\newcommand*{\da@leftarrow}{\mathchar"0\hexnumber@\symAMSa 4C }
\newcommand*{\xdashrightarrow}[2][]{%
  \mathrel{%
    \mathpalette{\da@xarrow{#1}{#2}{}\da@rightarrow{\,}{}}{}%
  }%
}
\newcommand*{\da@xarrow}[7]{%
  \sbox0{$\ifx#7\scriptstyle\scriptscriptstyle\else\scriptstyle\fi#5#1#6\m@th$}%
  \sbox2{$\ifx#7\scriptstyle\scriptscriptstyle\else\scriptstyle\fi#5#2#6\m@th$}%
  \sbox4{$#7\dabar@\m@th$}%
  \dimen@=\wd0 %
  \ifdim\wd2 >\dimen@
    \dimen@=\wd2 %
  \fi
  \count@=2 %
  \def\da@bars{\dabar@\dabar@}%
  \@whiledim\count@\wd4<\dimen@\do{%
    \advance\count@\@ne
    \expandafter\def\expandafter\da@bars\expandafter{%
      \da@bars
      \dabar@ 
    }%
  }%
  \mathrel{#3}%
  \mathrel{%
    \mathop{\da@bars}\limits
    \ifx\\#1\\%
    \else
      _{\copy0}%
    \fi
    \ifx\\#2\\%
    \else
      ^{\copy2}%
    \fi
  }%
  \mathrel{#4}%
}
\newcommand{\KUG}{KU_G}
\title{On the $KU_G$-local equivariant sphere}
\author[P. J. Bonventre]{Peter {J}. Bonventre}
\email{peter.bonventre@georgetown.edu}
\author[B. J. Guillou]{Bertrand {J}. Guillou}
\email{bertguillou@uky.edu}
\author[N. J. Stapleton]{Nathaniel {J}. Stapleton}
\email{nat.j.stapleton@uky.edu}
\date{\today}
\thanks{
Guillou was supported by NSF grant DMS-2003204.
Stapleton was supported by NSF grant DMS-1906236 and a Sloan Fellowship.
}
\begin{document}

\maketitle

\begin{abstract}
Equivariant complex $K$-theory and the equivariant sphere spectrum are two of the most fundamental equivariant spectra. For an odd $p$-group, we calculate the zeroth homotopy Green functor of the localization of the equivariant sphere spectrum with respect to equivariant complex $K$-theory. Further, we calculate the zeroth homotopy Tambara functor structure in the case of odd cyclic $p$-groups.
\end{abstract}

\tableofcontents

\section{Introduction}

In the '70s, Adams--Baird (unpublished) and Ravenel \cite{Rav}, see also \cite{Bousfield}, calculated the homotopy groups of the $KU$-localization of the sphere spectrum. In particular, they found that $\pi_0 L_{KU}S \cong \Z \oplus \F_2$. In fact, the natural homotopy commutative ring structure on $L_{KU}S$ endows $\pi_0L_{KU}S$ with the ring structure $\mathbb Z[x]/(2x,x^2)$. Both complex $K$-theory $KU$ and the sphere spectrum $S$ admit natural equivariant refinements. Let $q$ be an odd prime. The goal of this paper is to calculate the Mackey functor $\mpi_0$ of the localization of the equivariant sphere spectrum with respect to equivariant complex $K$-theory when the group of equivariance is a $q$-group. 

Fix a $q$-group $G$. Let $KU_G$ be genuine equivariant complex $K$-theory and let $S_G$ be the genuine equivariant sphere spectrum. We will denote the zeroth homotopy Mackey or Green functor by $\mpi_0$. 
Both $\mpi_0 KU_G$ and $\mpi_0 S_G$ admit concrete descriptions --- $\mpi_0 KU_G \cong \mf{RU}$, the complex representation ring Green functor, and $\mpi_0 S_G \cong \mf{A}$, the Burnside ring Green functor.

Our goal is to understand $\mpi_0 L_{KU_G} S_G$, the equivariant generalization of the result of Adams--Baird and Ravenel mentioned above. As $S_G$ is an $E_\infty$-ring in $G$-spectra, \cite{Hill19}*{Corollary~3.12} implies $L_{KU_G} S_G$ is again $E_\infty$, and hence $\mpi_0 L_{KU_G} S_G$ is a Green functor. 
In fact, we show in \cref{LKUGSH_GEinf} that $L_{KU_G} S_G$ is a $G$-$E_\infty$ ring.
This implies that $\mpi_0 L_{KU_G} S_G$ is furthermore a Tambara functor, and we determine this structure in the case that $G$ is an odd cyclic $q$-group.

Let $\mf{J} \subset \mf{A}$ be the Mackey ideal with $\mf{J}(G/H) \subset \mf{A}(G/H)$ generated by virtual $H$-sets $X$ such that $|X^h| = 0$ for all $h \in H$. 
We show the following:

\begin{theorem}
\label{thm:main} 
Let $G$ be an odd $q$-group. Then there is an isomorphism of Green functors 
\[
\underline{\pi}_0 L_{KU_G} S_G \cong
(\underline{A}/\underline{J}) \otimes \pi_0(L_{KU}S)
\cong
(\underline{A}/\underline{J}) [x]/(2x,x^2).
\]
\end{theorem}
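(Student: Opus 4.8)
The plan is to glue $L_{KU_G}S_G$ from its $q$-completion and its $q$-inverted localization via the arithmetic fracture square, and to compute each piece separately. Two structural facts drive the argument. First, after inverting $q$ the Burnside Green functor $\mf{A}[1/q]=\mpi_0 S_G[1/q]$ is split by the primitive idempotents $e_{(H)}$ of $A(G)[1/q]\cong \prod_{(H)}\Z[1/q]$, giving a decomposition $S_G[1/q]\simeq \prod_{(H)}e_{(H)}S_G[1/q]$. Second, for every nontrivial subgroup $H\le G$ the geometric fixed points $\Phi^H KU_G$ have $q$ inverted — indeed $\Phi^H KU_H=0$ when $H$ is noncyclic, while for cyclic $H\neq e$ inverting the relevant Euler classes inverts $q$ — so that $\Phi^H(KU_G)^\wedge_q=0$ and hence $(KU_G)^\wedge_q$ is a \emph{free} $G$-spectrum. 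Throughout I use that $\mf{J}=\ker(\mf{A}\to\mf{RU})$, the kernel of the permutation-representation map induced by the unit $S_G\to KU_G$, so that $\mf{A}/\mf{J}\cong\im(\mf{A}\to\mf{RU})$ is levelwise the subring of $R(H)$ spanned by permutation representations. Since $\Res^G_H$ commutes with $KU_G$-localization (it has both adjoints and $\Res^G_H KU_G=KU_H$), the computation at the level $G/H$ is the computation for the odd $q$-group $H$, and naturality of the fracture square ensures the restrictions and transfers come out correctly, so it suffices to treat a fixed $G$ and track $\pi_0^G$.

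Away from $q$: for noncyclic $H$ the idempotent $e_{(H)}$ has vanishing character (its marks at cyclic subgroups are all zero), so $e_{(H)}\in\mf{J}[1/q]=\ker\!\big(A(G)[1/q]\to R(G)[1/q]\big)$; hence $e_{(H)}$ acts by $0$ on $KU_G[1/q]$, the summand $e_{(H)}S_G[1/q]$ is $KU_G$-acyclic, and $L_{KU_G[1/q]}\!\big(e_{(H)}S_G[1/q]\big)\simeq 0$. For cyclic $C$ the summand $e_{(C)}S_G[1/q]$ is concentrated over the conjugacy class $(C)$; identifying it with the corresponding Borel $W_G(C)$-spectrum, its $KU_G[1/q]$-localization is controlled by non-equivariant $L_{KU}S[1/q]$ applied to $BW_G(C)$, and since $|W_G(C)|$ is a power of $q$ the stable classifying space $BW_G(C)$ becomes trivial after inverting $q$, giving $\mpi_0 L_{KU_G[1/q]}\!\big(e_{(C)}S_G[1/q]\big)\cong\pi_0 L_{KU}S[1/q]$. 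Summing over cyclic conjugacy classes yields $\mpi_0 L_{KU_G}S_G[1/q]\cong (\mf{A}/\mf{J})[1/q]\otimes\pi_0 L_{KU}S[1/q]$.

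At $q$ — the heart of the argument — freeness of $(KU_G)^\wedge_q$ means a $G$-spectrum is $(KU_G)^\wedge_q$-acyclic exactly when its underlying spectrum is $K(1)$-acyclic (at $q$); consequently $(KU_G)^\wedge_q$-local objects are Borel complete, and $(L_{KU_G}S_G)^\wedge_q\simeq F\!\big(EG_+,L_{K(1)}S\big)$ with $L_{K(1)}S$ carrying the trivial $G$-action. Thus $\mpi_0(L_{KU_G}S_G)^\wedge_q(G/H)\cong (L_{K(1)}S)^0(BH)$. Using the fiber sequence $L_{K(1)}S\to KU^\wedge_q\xrightarrow{\psi^g-1}KU^\wedge_q$ for a topological generator $g$ of $\Z_q^\times$ (valid as $q$ is odd) together with $KU^{-1}(BH)=0$ (Atiyah--Segal), this is $\ker\!\big(\psi^g-1\colon KU^0(BH)^\wedge_q\to KU^0(BH)^\wedge_q\big)$, and by flatness of completion it is the completion of $\ker\!\big(\psi^g-1\colon R(H)\to R(H)\big)$. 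Now the key representation-theoretic input: $\psi^g$ generates the Galois action of $(\Z/\exp H)^\times$ on $R_\C(H)$, so its fixed subring is the image of $R_\Q(H)$ in $R_\C(H)$ — here the oddness of $q$ enters, ensuring all Schur indices are $1$ — and by the Ritter--Segal theorem $A(H)\to R_\Q(H)$ is surjective for the $q$-group $H$, so this image is exactly $\im\!\big(A(H)\to R(H)\big)=(\mf{A}/\mf{J})(G/H)$. Hence $\mpi_0 L_{KU_G}S_G{}^\wedge_q\cong (\mf{A}/\mf{J})^\wedge_q\otimes\pi_0 L_{K(1)}S$, using $\pi_0 L_{K(1)}S\cong\Z_q$.

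Finally, gluing along the arithmetic square — the correction terms from $\pi_1$ vanish, the relevant rationalized homotopy of $F(BG_+,L_{K(1)}S)$ being concentrated in degrees $0$ and $-1$ — reassembles $(\mf{A}/\mf{J})[1/q]\otimes\pi_0 L_{KU}S[1/q]$ and $(\mf{A}/\mf{J})^\wedge_q\otimes\pi_0 L_{K(1)}S$ over their common rationalization into $(\mf{A}/\mf{J})\otimes\pi_0 L_{KU}S$; the $2$-torsion class $x$, living only away from $q$, survives untouched. For the Green structure one notes that the unit $\mf{A}=\mpi_0 S_G\to\mpi_0 L_{KU_G}S_G$ has image $\mf{A}/\mf{J}$ (compatibly with the fracture), that $\mf{J}$ and $2$ act by zero on the $x$-summand, and that $x^2=0$ as non-equivariantly, which pins down the ring as $(\mf{A}/\mf{J})[x]/(2x,x^2)$. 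The main obstacle is the $q$-complete step: establishing freeness of $(KU_G)^\wedge_q$ from a description of $\Phi^H KU_G$, carefully commuting $q$-completion past the $KU_G$-localization, and above all the identification $\ker(\psi^g-1\mid R(H))=\im(A(H)\to R(H))$, which is precisely where both hypotheses on $G$ are used. Making the geometric-summand analysis away from $q$ fully rigorous is a secondary technical point.
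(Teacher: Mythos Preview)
Your overall strategy is sound and, at the prime $q$, arguably cleaner than the paper's main argument: the paper constructs a stable \emph{equivariant} Adams operation $\psi^\ell$ on $(KU_G)^\wedge_q$ (this occupies two full sections) and identifies $L_{KU_G/q}S_G$ as the fiber of $\psi^\ell-1$, whereas you reduce directly to the Borel picture and invoke only the nonequivariant fiber sequence. The paper in fact records your route in a remark as an alternative suggested by Balderrama. Away from $q$, your idempotent-splitting argument is essentially the paper's treatment of the primes $p\neq q$, carried out all at once rather than one prime at a time, and the representation-theoretic identification $\ker(\psi^g-1\mid R(H))=\mathrm{im}(A(H)\to R(H))$ via Ritter--Segal and triviality of Schur indices is exactly the paper's key algebraic input.

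There is, however, a concrete error in your $q$-complete step. The spectrum $(KU_G)^\wedge_q$ is \emph{not} free: geometric fixed points do not commute with $q$-completion, and running the Euler-class localization over $\Z_q$ rather than $\Z$ gives, for instance, $\Phi^{C_q}\big((KU_{C_q})^\wedge_q\big)\simeq KU\otimes\Q_q(\zeta_q)\neq 0$. Consequently the Bousfield class of $(KU_G)^\wedge_q$ is strictly larger than that of $KU_G/q$ --- for example $\widetilde{EG}$ is $KU_G/q$-acyclic but not $(KU_G)^\wedge_q$-acyclic --- so your deduction ``$(KU_G)^\wedge_q$-acyclic $\Leftrightarrow$ underlying $K(1)$-acyclic'' fails as stated. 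The fix is immediate: replace $(KU_G)^\wedge_q$ by $KU_G/q$ throughout this step. Since $\Phi^H$ is exact, $\Phi^H(KU_G/q)=\Phi^H(KU_G)/q=0$ for every nontrivial $H$, so $KU_G/q$ genuinely is free; and since $(L_{KU_G}S_G)^\wedge_q\simeq L_{KU_G/q}S_G$, your conclusion $(L_{KU_G}S_G)^\wedge_q\simeq F(EG_+,L_{K(1)}S)$ then follows exactly as you outline.
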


Our work on this result was motivated by two things. First was our desire to understand the genuine equivariant analogue of a question of Ravenel's \cite{ProbSess} about the kernel of the canonical map from the Burnside ring to the $K(n)$-local cohomotopy of $BG$ when $n=1$. \cref{thm:main} is certainly the kind of answer that Ravenel would have expected. See also \cite{Szymik}*{Section~4.2} for Ravenel's question when $n=1$. Second was our desire to understand how to calculate with localizations in genuine equivariant stable homotopy theory. We learned that the geometric fixed point functors are the most powerful tools in the toolkit. From this perspective, we view \cref{thm:main} as a first nontrivial exercise to solve.

To prove \cref{thm:main}, we follow the standard strategy for calculating the homotopy groups of the $KU$-local sphere, adding in some applications of the geometric fixed point functors when needed. That is, we use the arithmetic fracture square \cref{eq:fracsq} in order to work locally at a prime $p$. The calculation looks different when $p$ is equal to $q$ in comparison to when $p$ is different from $q$.

We show that for $\ell$ coprime to $q$ and furthermore primitive mod $q^k$ for all $k>0$, there is a fiber sequence of equivariant spectra
\[
L_{KU_G/q} S_G \to (KU_G)^{\wedge}_q \overset{\psi^\ell-1}{\longrightarrow} (KU_G)^{\wedge}_q, 
\]
and we use this to calculate $\mpi_0L_{KU_G/q} S_G$. This requires that we show that $\psi^\ell$ is stable after inverting $\ell$ and also uses the fact that $G$ is a $q$-group to describe the kernel of $\mpi_0 (\psi^\ell-1)$ in terms of the Burnside ring. To see that $\psi^\ell$ is stable after inverting $\ell$, we make use of the Atiyah--Segal character map and formulas for the Adams operations obtained by the third author with Barthel and Berwick-Evans.

When $p \neq q$, we calculate $\mpi_0L_{KU_G/p} S_G$ using the product decomposition of the category of equivariant spectra localized away from the order of the group. 
In this case, the collection of geometric fixed point functors can be used to produce an equivalence between the category of equivariant spectra (localized away from $|G|$) and the product over conjugacy classes of subgroups $H \subseteq G$ of the categories of $p$-local Borel-equivariant $W(H)$-spectra, where $W(H)$ is the Weyl group of $H$ in $G$. 
There is also an algebraic incarnation of this equivalence. 
The key result underlying both is that, after inverting the order of the group, the Burnside ring factors as a product of copies of $\Z[1/{|G|}]$. This leads to a corresponding decomposition of the 
 category of $p$-local $G$-Mackey functors as a product of simpler algebraic categories. We give an explicit formula for the inverse to these equivalences. Making use of the facts that geometric fixed points send localizations to localizations and that $\Phi^H KU_G$ is trivial unless $H \subseteq G$ is cyclic, it is reasonably straight forward to find $\mpi_0 L_{KU_G/p} S_G$ in this case.

 \subsection*{Acknowledgements} 
It is a pleasure to thank Tomer Schlank for his invaluable input. He caught an error in our first version of the argument and suggested the work-around. From the beginning of the project he encouraged us to exploit the geometric fixed point functors. We also thank John Greenlees for a helpful discussion concerning the geometric fixed points of $KU_G$.  We are indebted to Mike Geline for making us aware of Schilling's theorem. Further, we thank William Balderrama for several specific suggestions. We also thank Anna Marie Bohmann, Davis Deaton, and Mike Hill for helpful discussions.

\subsection{Organization}

We begin with a quick review of background material in \cref{sec:review}.
In \cref{sec:geometric} we calculate the geometric fixed points of $KU_G$.
We analyze the behavior of the Atiyah-Segal character map on a certain equivariant Bott class in \cref{sec:Bott}
and use this in \cref{sec:stable} to show that the Adams operation $\psi^\ell$ lifts to a map of equivariant spectra.
This leads to the above fiber sequence, which we use to calculate $\mpi_0L_{KU_G/q} S_G$ in \cref{sec:fiber}.
We review the $p$-local splitting of $G$-spectra and its algebraic analogue in \cref{sec:splitting} and use this in \cref{sec:pneqq} to describe $L_{KU_G/p}S_G$ when $p\neq q$.
In \cref{sec:fracturesquare} we synthesize these calculations in a fracture square to prove our main result, \cref{thm:main}.
In the final \cref{sec:Hillforthewin}, we show that $L_{KU_G} S_G$ inherits a $G$-$E_\infty$ structure and calculate the Tambara functor structure on $\mpi_0L_{KU_G/q} S_G$, when $G$ is an odd cyclic $q$-group.

\section{Preliminaries}
\label{sec:review}

For the duration of the paper, we fix a finite group $G$. At times, we will assume it is further an (odd) $q$-group. In this section we describe the algebraic and topological objects that will play a role in the rest of the paper.
\subsection{Algebra}
We will make use of several commutative rings associated to $G$:
\begin{itemize}
    \item Let $A(G)$ be the Burnside ring. This is the Grothendieck group of isomorphism classes of finite left $G$-sets under disjoint union. The product is induced by the product of left $G$-sets.
    \item Let $R\Q(G)$ (resp. $RO(G)$, $RU(G)$) be the rational (resp. real, complex) representation ring. This is the Grothendieck group of isomorphism classes of finite-dimensional rational (resp. real, complex) $G$-representations under direct sum, with product induced by the tensor product of $G$-representations.
    \item For $\Q \subseteq R \subseteq \C$, let $\Cl(G,R)$ be the ring of $R$-valued class functions on $G$.
    This is the ring of $R$-valued functions on the set of conjugacy classes of $G$.
    \item Let $\chi \colon RU(G) \to \Cl(G,\C)$ be the character map. This map is injective and thus $RU(G)$ may be viewed as a subring of $\Cl(G,\C)$. 
    \item Let $R\Q_{\chi}(G)$ be the subring of $RU(G)$ 
    consisting of virtual representations for which the character takes rational values. In other words, $R\Q_{\chi}(G) = RU(G) \cap \Cl(G,\Q)$, where the intersection takes place in $\Cl(G,\C)$.
\end{itemize}

There are canonical ring maps
\begin{equation} \label{eq:all}
A(G) \to R\Q(G) \to RO(G) \to RU(G) \to \Cl(G,\C),
\end{equation}
none of which are necessarily isomorphisms. The first map is induced by the operation that sends a finite $G$-set to the free rational vector space on the underlying set, the second map is induced by base change from $\Q$ to $\R$, the third map is induced by base change from $\R$ to $\C$, and the fourth map is the character map $\chi$. The ring $R\Q_{\chi}(G)$ sits in between $R\Q(G)$ and $RU(G)$.

Recall that a Green functor is a Mackey functor that takes values in commutative rings and for which the restriction maps are ring maps and the transfer maps satisfy Frobenius reciprocity. 
Equivalently, a Green functor is a commutative monoid in the symmetric monoidal category of Mackey functors
\cite{Lewis}.
Each of the constructions above extends to a Green functor. We will denote the associated Green functors with an underline. For example, $\mf{A}$ is the $G$-Green functor defined by $\mf{A}(G/H) = A(H)$. As all of the maps in \ref{eq:all} are compatible with restriction and transfer maps, we have maps of Green functors
\begin{equation} \label{eq:mackeyall}
\mf{A} \to \mf{R\Q} \to \mf{RO} \to \mf{RU} \to \mf{\Cl}.
\end{equation}

In fact, each of these constructions, as well as $\mf A / \mf J$, extend to (maps of) Tambara functors, an even richer algebraic structure. However, we will not make use of this observation.

\subsection{Review of $G$-spectra}
We will work throughout in the category $Sp^G$ of (genuine) $G$-equivariant spectra.
We will make use of several equivariant ring spectra:
\begin{itemize}
    \item Let $S_{G}$ be the equivariant sphere spectrum.
    \item For a $G$-Mackey functor $\ul{M}$, let $H_G \ul{M}$ be the equivariant Eilenberg-Mac~Lane spectrum
    \item Let $\KUG$ be the equivariant complex $K$-theory spectrum.
\end{itemize}

We will use the same notation for a pointed $G$-space and its  suspension $G$-spectrum. A cofiber sequence of pointed $G$-spaces then gives rise to a cofiber sequence of $G$-spectra via the suspension $G$-spectrum functor.

Given $G$-spectra $E$ and $X$, we will write $L_{E}X$ for the Bousfield localization of $X$ with respect to $E$. This construction has been studied previously in \cite{Carrick}. If $X$ is an $E_{\infty}$-ring in equivariant spectra, then $L_{E}X$ is an $E_{\infty}$-ring in equivariant spectra. Further, we will write
\[
X^{\wedge}_p = L_{M(p)_G}X
\]
where $M(p)_G = S_G/p$ is the mod $p$ Moore spectrum,
and 
\[
\Q \otimes X = L_{H_G\ul{\Q}} X \simeq H_G\ul{\Q} \smsh X.
\]
These fit together into the arithmetic fracture square
\begin{equation}
    \label{eq:fracsq}
    \xymatrix{X \ar[r] \ar[d] & \prod_{p} X^{\wedge}_p \ar[d] \\ \Q \otimes X \ar[r] & \Q \otimes (\prod_p X^{\wedge}_p),}  
\end{equation}
which is a homotopy pullback of equivariant spectra.
If $X$ has the structure of an $E_\infty$-ring $G$-spectrum, then this is a homotopy pullback of $E_\infty$-ring $G$-spectra.

For a $G$-spectrum $X$, the Mackey functor $\mpi_n(X)$ has values 
\[ \mpi_n(X)(G/H) = \pi_n^H (X) = \pi_n(X^H),\]
where $X^H$ is the fixed-point spectrum, as in \cref{sec:ReviewFix} below.
Some of the Green functor-valued homotopy groups of some of the equivariant spectra above are well-known:
\begin{itemize}
    \item $\mpi_0 S_G \cong \mf{A}$.
    \item $\mpi_* \KUG \cong \mf{RU}[\beta, \beta^{-1}]$, where $\beta$ is in degree $2$.
\end{itemize}

We will also make use of the category $Sp^{hG}$ of Borel G-equivariant spectra. This is the localization of $Sp^G$ at the set of underlying equivalences. The localization functor $Sp^G \rtarr Sp^{hG}$ has both a left and a right adjoint. The left adjoint is given by $X \mapsto EG_+ \smsh X$, while the right adjoint is $X \mapsto F(EG_+,X)$.

\subsection{Fixed points and geometric fixed points}
\label{sec:ReviewFix}

For any subgroup $H \leq G$, there is a restriction-induction adjunction
\[
\begin{tikzcd}
    Sp^{G} \ar[r,"\downarrow^G_{H}",shift left=0.5ex] & Sp^{H} \ar[l,"\uparrow_{H}^G",shift left=0.5ex]
\end{tikzcd}
\]
between the category of $G$-spectra and $H$-spectra.
According to the Wirthmuller isomorphism, restriction is both left and right adjoint to induction.

Suppose that $N \unlhd G$ is a normal subgroup. Then there is an adjoint pair
\begin{equation}
\label{InflationAdjunction}
\begin{tikzcd}
    Sp^{G/N} \ar[r,"\mathrm{inf}_{G/N}^G",shift left=0.5ex] & Sp^G, \ar[l,"(-)^N",shift left=0.5ex]
\end{tikzcd}
\end{equation}
where $\inf_{G/N}^G$ is inflation and $(-)^N$ is the categorical $N$-fixed points functor. We will also heavily employ the geometric fixed points functor, which fits into an adjunction as
\begin{equation}
\label{GeometricAdjunction}
\begin{tikzcd}
    Sp^{G} \ar[r,"\Phi^N",shift left=0.5ex] & Sp^{G/N}. \ar[l,"\phi_N^*",shift left=0.5ex]
\end{tikzcd}
\end{equation}
Denote by $\cF[N]$ the family of subgroups of $G$ which do not contain $N$, and let 
$\widetilde{E\cF[N]}$  be the cofiber of $E\cF[N]_+\rtarr S_G$.
Then the geometric fixed points functor is 
\[\Phi^N(X) = ( \widetilde{E\cF[N]} \smsh X)^N,\]
while the geometric inflation functor is
\[
\phi_N^*(Y) = \widetilde{E\cF[N]} \smsh \mathrm{inf}_{G/N}^G Y.
\]
In the case $N=G$, then $\cF[G]$ is the family of proper subgroups, which we will write as $\mathcal{P}_G$. 
Both fixed point functors can be extended to the case of a not-necessarily-normal subgroup $H\leq G$ by composing with the restriction-induction adjunction
\[
\begin{tikzcd}
    Sp^{G} \ar[r,"\downarrow^G_{N_GH}",shift left=0.5ex] & Sp^{N_GH}, \ar[l,"\uparrow_{N_GH}^G",shift left=0.5ex]
\end{tikzcd}
\]
where $N_GH \leq G$ is the normalizer of $H$ in $G$.

\section{Geometric fixed points of $\KUG$} \label{sec:geometric}

In this section, we will compute the geometric fixed points of $KU_G$ at $q$-subgroups where $q$ is a prime,
following \cite{Gr}. 

\begin{notn}
As usual, we will use $\rho = \rho_G$ to denote the complex regular representation of $G$.
\end{notn}

The categorical fixed points of $\KUG$ were calculated by Segal.

\begin{prop}\cite{Seg}*{Proposition~2.2} \label{CatFPKU}
There is an equivalence of homotopy commutative ring spectra $(\KUG)^H \simeq KU \otimes RU(H)$.
\end{prop}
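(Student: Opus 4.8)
The plan is to reduce to the case $H = G$, to promote $(\KUG)^G$ to a $KU$-algebra, and then to produce an explicit $KU$-linear ring equivalence by resolving $RU(G)$ as a free abelian group. Here and below, $KU \otimes RU(G)$ denotes the $KU$-module spectrum $\bigvee_i KU$ indexed by a $\Z$-basis $\{v_i\}$ of $RU(G)$, equipped with the evident ring structure coming from those on $KU$ and on $RU(G)$.

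First I would observe that the $H$-fixed points of a $G$-spectrum depend only on its restriction to $H$, and that $\Res^G_H \KUG \simeq KU_H$ (restriction of equivariant $K$-theory), so that $(\KUG)^H \simeq (KU_H)^H$; it therefore suffices to treat $H = G$. Next I would record that $\pi_0^G\KUG \cong RU(G)$ as rings and $\pi_*^G \KUG \cong RU(G)[\beta^{\pm 1}]$ with $\beta$ the equivariant Bott class (the $K$-theory of a point, together with equivariant Bott periodicity; this is also the $G/G$-level of the identification $\mpi_* \KUG \cong \mf{RU}[\beta,\beta^{-1}]$ from \cref{sec:review}), so that $(\KUG)^G$ is a homotopy commutative, evenly graded, $2$-periodic ring spectrum with $\pi_0 \cong RU(G)$.

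I would then exhibit $(\KUG)^G$ as a $KU$-algebra. The $G$-spectrum $\KUG$ is split: sending a complex vector bundle over a trivial $G$-space to the same bundle with trivial fiberwise action is a multiplicative natural transformation, which refines to a homotopy ring map $\inf_{G/G}^G KU \to \KUG$ of $G$-spectra. Applying $(-)^G$ and using $(-)^G \circ \inf_{G/G}^G \simeq \id$ (the case $N = G$ of the adjunction \eqref{InflationAdjunction}), one obtains a homotopy ring map $KU \to (\KUG)^G$, which is unital since a trivial line bundle maps to $1 \in RU(G)$. In particular $(\KUG)^G$ becomes a $KU$-module via $KU \smsh (\KUG)^G \to (\KUG)^G \smsh (\KUG)^G \to (\KUG)^G$, and the Bott class $\beta \in \pi_2 KU$ maps to a unit multiple $a\beta$ of the equivariant Bott class, i.e.\ $a \in RU(G)^\times$. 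With this in hand I would define $\Phi\colon KU \otimes RU(G) \to (\KUG)^G$ as the composite
\[
KU \smsh \Big(\bigvee_i S\Big) \xrightarrow{\;\id \smsh v\;} KU \smsh (\KUG)^G \longrightarrow (\KUG)^G ,
\]
where $v\colon \bigvee_i S \to (\KUG)^G$ has components the basis elements $v_i \in RU(G) = \pi_0(\KUG)^G$ and the second map is the $KU$-module structure above. On homotopy, $\Phi_*$ carries the $\Z[\beta^{\pm 1}]$-basis element $\beta^n v_i$ of $KU_* \otimes RU(G)$ to $a^n v_i \beta^n$; since $a$ is a unit, $\{a^n v_i\}_i$ is again a $\Z$-basis of $RU(G)$, so $\Phi_*$ is an isomorphism in each degree and $\Phi$ is an equivalence. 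Multiplicativity of $\Phi$ is formal: it sends a generator $x \otimes v$ to $\bar x \cdot v$ with $\bar x$ the image of $x \in \pi_* KU$, and $\bar x \cdot v \cdot \bar y \cdot w = \overline{xy} \cdot (vw)$ because $(\KUG)^G$ is homotopy commutative and $KU \to (\KUG)^G$ is a ring map; the unit also goes to the unit. Applying the case just proved to $KU_H$ and composing with $(\KUG)^H \simeq (KU_H)^H$ then gives the statement in general.

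I expect the main obstacle to be the multiplicativity bookkeeping: producing the split ring map $\inf_{G/G}^G KU \to \KUG$ and checking that each construction above respects ring structures at the level of homotopy ring spectra. One can sidestep the splitting by instead using the canonical complex orientation of $\KUG$: it induces a complex orientation on $(\KUG)^G$ whose formal group law is multiplicative with coordinate $\beta$, and Snaith's theorem $KU \simeq \Sigma^\infty_+ \mathbb{CP}^\infty[\beta^{-1}]$ then produces the ring map $KU \to (\KUG)^G$ directly.
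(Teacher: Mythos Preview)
The paper does not give a proof of this proposition; it is simply cited from Segal \cite{Seg}*{Proposition~2.2} and then restated as saying that $(\KUG)^H$ is a free $KU$-module of rank the number of conjugacy classes of $H$. So there is no argument in the paper to compare against, and your proof stands on its own.

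Your argument is correct. The reduction to $H=G$ via $\downarrow^G_H\KUG\simeq KU_H$, the use of the split ring map $\mathrm{inf}_{G/G}^G KU\to\KUG$ to make $(\KUG)^G$ a $KU$-algebra, and the construction of $\Phi$ as a $KU$-module map inducing the evident isomorphism on $\pi_*$ are all fine. The one spot that deserves a word more than ``formal'' is multiplicativity: agreement on $\pi_*$ alone does not literally force the square comparing $\Phi\circ\mu$ with $\mu\circ(\Phi\smsh\Phi)$ to commute in $\Ho(\Sp)$. What makes it work is precisely what you already invoke parenthetically---$\Phi$ is by construction $\mu\circ(\alpha\smsh v)$, where $\alpha\colon KU\to(\KUG)^G$ and $v\colon\bigvee_i S\to(\KUG)^G$ are each ring maps up to homotopy into a homotopy-commutative target, and the product of two commuting ring maps into such a target is again a ring map; your $\pi_*$ calculation is the shadow of that spectrum-level statement. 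As a minor aside, the split map sends the nonequivariant Bott class to the equivariant Bott class for the trivial representation on the nose, so your unit $a$ is in fact $1$, though as you note only invertibility is needed.
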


In other words, we have that the categorical fixed points are a free $KU$-module, of rank equal to the number of conjugacy classes in $G$.
\medskip

We begin by computing geometric fixed points with respect to the cyclic subgroups $C_{q^k} \leq G$. 
We will see below in \cref{PhiNonCycKUzero} that if $H\leq G$ contains a non-cyclic $q$-group, then $\Phi^H(\KUG)\simeq \ast$, so that cyclic groups are the only case of interest.
We denote by $\overline{RU}(C_{q^k})$ the quotient 
\begin{equation}
\label{defnRUbar}
\overline{RU}(C_{q^k}) = RU(C_{q^k})/\rho(k-1),
\end{equation}
where $\rho(k-1)$ is the pullback of $\rho_{C_q} \in RU(C_q)$ along the quotient map $C_{q^k} \rtarr C_q$. 

Fix an isomorphism $RU(C_{q^k}) \iso \Z[x]/(x^{q^k}-1)$, where $x$ denotes a nontrivial 1-dimensional irreducible representation of $C_{q^k}$. In this ring, the regular representation is given by
\[
\rho_{C_{q^k}} = x^{{q^k}-1}+ \dots + x + 1.
\]  
We can identify $\rho(k-1)$ under this isomorphism as 
\[
\rho(k-1) = x^{(q-1)q^{k-1}}+\dots+x^{q^{k-1}}+1,
\]
and the defining relation for $RU(C_{q^k})$ splits as the product 
 \begin{equation} \label{regularfactorization}
 x^{q^k}-1 = (x^{q^{k-1}}-1) \cdot \rho(k-1).
 \end{equation}
We see that $\overline{RU}(C_{q^k})$ is a free abelian group of rank $(q-1)q^{k-1}$.

\begin{proposition}
\label{CyclicGeoFP}
Let $C_{q^k} \subseteq G$ be a cyclic $q$-subgroup of $G$. There is an equivalence of $KU$-modules 
\[
\Phi^{C_{q^k}} \KUG \simeq KU \otimes \overline{RU}(C_{q^k}) [\frac1q].
\]
\end{proposition}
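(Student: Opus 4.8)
The plan is to follow the approach of Greenlees \cite{Gr}, combining the isotropy separation sequence for the normal subgroup $C_{q^k}\leq N_G(C_{q^k})$ with Segal's computation of the categorical fixed points. Since geometric fixed points are defined by passing to the normalizer, we may as well assume $C_{q^k}$ is normal in $G$ for the computation, working in $Sp^G$ with $N = C_{q^k}$; the answer as a $KU$-module is insensitive to this reduction. Recall the cofiber sequence $E\mathcal{P}_N{}_+ \smsh X \to X \to \widetilde{E\mathcal{P}_N}\smsh X$ obtained for the family $\mathcal{F}[N]$ of subgroups not containing $N$; applying categorical $N$-fixed points and using that $(\widetilde{E\mathcal{F}[N]}\smsh X)^N = \Phi^N X$, we get a cofiber sequence relating the categorical fixed points $(\KUG)^N \simeq KU\otimes RU(C_{q^k})$ (by \cref{CatFPKU}) to $\Phi^{C_{q^k}}\KUG$, with fiber the $N$-fixed points of $E\mathcal{F}[N]_+ \smsh \KUG$.

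The key computational input is the identification of this fiber term. The point is that $E\mathcal{F}[N]_+ \smsh \KUG$ only sees the ``parts'' of $\KUG$ supported on proper subgroups not containing $N$; after passing to $C_{q^k}$-fixed points, its homotopy should be built from $RU$ of proper quotients. Concretely, I expect that the $N$-fixed points of $E\mathcal{F}[N]_+\smsh \KUG$ have homotopy given by $KU\otimes \rho(k-1)\cdot RU(C_{q^k})$ after inverting $q$, i.e., the kernel of $RU(C_{q^k}) \to \overline{RU}(C_{q^k})$; equivalently this is the image of induction/restriction from the subgroup $C_{q^{k-1}}$, since the regular representation $\rho_{C_q}$ pulled back to $C_{q^k}$ is precisely $\mathrm{ind}$ of something trivial on the proper part. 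Using the factorization $x^{q^k}-1 = (x^{q^{k-1}}-1)\cdot \rho(k-1)$ from \cref{regularfactorization}, together with the fact that $\rho(k-1)$ and $x^{q^{k-1}}-1$ generate coprime ideals after inverting $q$ (their resultant is a power of $q$, since $\rho(k-1)$ evaluated at a $q^{k-1}$-st root of unity is $q$ or $0$, and one checks the relevant ideal contains $q$), one obtains a splitting $RU(C_{q^k})[\tfrac1q] \cong \overline{RU}(C_{q^k})[\tfrac1q] \times RU(C_{q^{k-1}})[\tfrac1q]$. The cofiber sequence then degenerates after inverting $q$: the fiber term is the ``$RU(C_{q^{k-1}})$'' factor (induced up from below, hence killed by geometric fixed points up to the $q$-torsion which we have inverted), and the quotient $\Phi^{C_{q^k}}\KUG$ is the complementary summand $KU\otimes \overline{RU}(C_{q^k})[\tfrac1q]$.

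More carefully, the cleanest route avoids analyzing the middle term directly: one argues that $\Phi^{C_{q^k}}$ applied to the localization away from $q$ commutes with the relevant constructions, and that $\Phi^{C_{q^k}}(\mathrm{ind}_{C_{q^{k-1}}}^{C_{q^k}} Y) \simeq 0$ for any $Y$ (geometric fixed points of an induced spectrum from a subgroup not containing $N$ vanish). Then the splitting of $(\KUG)^N[\tfrac1q] \simeq KU\otimes RU(C_{q^k})[\tfrac1q]$ as a $KU$-module into an induced part and a complement, realized topologically, forces $\Phi^{C_{q^k}}\KUG[\tfrac1q]$ to be the complement $KU\otimes \overline{RU}(C_{q^k})[\tfrac1q]$; finiteness of the relevant $q$-torsion (bounded because everything in sight is a finitely generated module over $KU$ with $RU$ a finitely generated free $\Z$-module) guarantees that inverting $q$ kills it entirely rather than just rationally. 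The $KU$-module structure, and the fact that the answer is a free $KU[\tfrac1q]$-module of rank $(q-1)q^{k-1}$, then follow since $\overline{RU}(C_{q^k})$ is free abelian of that rank.

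The main obstacle I anticipate is the bookkeeping around $q$-torsion: showing not merely that the identification holds rationally but that inverting a single prime $q$ suffices. This comes down to a resultant/discriminant computation showing that the obstruction to the splitting of $\Z[x]/(x^{q^k}-1)$ into $\Z[x]/(x^{q^{k-1}}-1) \times \overline{RU}(C_{q^k})$ is supported exactly at $q$ — which is classical (it is the statement that the only ramified prime in the relevant cyclotomic-type decomposition is $q$) but needs to be stated and cited or checked. A secondary subtlety is making precise the claim that the fiber term $E\mathcal{F}[N]_+\smsh \KUG$, after $N$-fixed points, is induced from $C_{q^{k-1}}$ in a way compatible with Segal's identification; here I would lean on Greenlees's treatment rather than reprove the isotropy separation analysis from scratch.
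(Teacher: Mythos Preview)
Your approach via isotropy separation is genuinely different from the paper's. The paper never analyzes the fiber term $(E\mathcal{P}_+ \smsh KU_G)^{C_{q^k}}$; instead it uses equivariant Bott periodicity to rewrite the colimit defining $\Phi^{C_{q^k}} KU_G$ as the localization of $KU \otimes RU(C_{q^k})$ obtained by inverting the Euler class $e(\overline{\rho}) = \prod_{i=1}^{q^k-1}(x^i-1)$, and then two short algebraic lemmas identify this localization with $\overline{RU}(C_{q^k})[\tfrac1q]$. In particular the $q$-inversion emerges from the algebra: one checks that $(1-x^{q^{k-1}})$ times an explicit polynomial equals $q$ in $\overline{RU}(C_{q^k})$, so inverting that Euler-class factor forces $q$ to be a unit.

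Your outline has a real gap on exactly this point. At best you compute $\Phi^{C_{q^k}} KU_G$ \emph{after} inverting $q$: the splitting $RU(C_{q^k})[\tfrac1q] \cong \overline{RU}(C_{q^k})[\tfrac1q] \times RU(C_{q^{k-1}})[\tfrac1q]$ and the vanishing of $\Phi$ on induced spectra both require $q$ inverted beforehand. But the proposition asserts that $\Phi^{C_{q^k}} KU_G$ itself already has $q$ inverted, and nothing in your argument establishes this; your ``finiteness of $q$-torsion'' remark addresses a different concern (that inverting $q$ suffices rather than rationalizing) and does not show $q$ acts invertibly on the geometric fixed points. There is also a secondary looseness: the phrase ``splitting realized topologically'' needs the idempotents of $A(C_{q^k})[\tfrac1q]$ acting on $G$-spectra (as in \cref{sec:splitting}), not merely a $KU$-module splitting of the fixed-point spectrum. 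Once you invoke those idempotents the fiber identification does go through, since the image of transfer from $C_{q^{k-1}}$ in $RU(C_{q^k})$ is precisely the ideal $(\rho(k-1))$; but you would still owe the $q$-inversion, for which the paper's Euler-class computation (or something equivalent) seems unavoidable.
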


\begin{proof}
As we are only considering the underlying spectrum of the geometric fixed points, as opposed to the more equivariantly sophisticated variant from \eqref{GeometricAdjunction}, we may without loss of generality suppose that $G=C_{q^k}$.
Recall that one model for the space $\widetilde{E\mathcal{P}_G}$ of \cref{sec:ReviewFix} is 
\[
\widetilde{E\mathcal{P}_G} = S^{\infty V} = \hocolim_j S^{jV},
\]
 where $V$ is a real $G$-representation such that $V^G=0$ and $V^H\neq 0$ for all proper subgroups $H$. 
For example, we may take $V=\overline{\rho}_\R = \rho_\R-1$, the reduced real regular representation.
It follows that for any $G$-spectrum $X$, the geometric fixed points can be computed as 
\[ 
\Phi^G X = ( S^{\infty V} \smsh X)^G \simeq \hoco_j (S^{jV} \smsh X)^G, 
\]
The maps in the colimit are given by multiplication by the Euler class $e_V\in \pi_{-V}(S_G)$ of $V$ on $\pi_{\star} X$.

Now, if $V$ is the underlying $2n$-dimensional real representation of an $n$-dimensional complex representation, then equivariant Bott periodicity (see \cite{At} or \cite{Alaska}*{Section XIV.4}) gives a canonical equivalence of equivariant spectra $\Sigma^V KU_{C_{q^k}} \simeq \Sigma^{2n} KU_{C_{q^k}}$.
If $q$ is odd, then $V=\overline{\rho}_\R$ underlies a complex representation of dimension $\frac{q^k-1}2$. 
On the other hand,  in the case of $C_2$, $\overline{\rho}_\R = \sigma_\R$ is the 1-dimensional sign representation, but $2\sigma_\R$ underlies the 1-dimensional complex sign representation $\sigma_\C$, and so we take $V=2\sigma_\R$ in this case. 
Similarly, for $C_{2^k}$, we take $V=2\overline{\rho}_\R$.
In either of these cases, the Euler class may be identified with a $\Z$-graded class, and the geometric fixed points may be rewritten as
\[ \Phi^{C_{q^k}} KU_{C_{q^k}} \simeq \hoco_j  \Sigma^{2jn} (KU_{C_{q^k}})^{C_{q^k}}.\]
According to \cref{CatFPKU} and the 2-fold Bott periodicity of $KU$, this is equivalent to
\[ \Phi^{C_{q^k}} KU_{C_{q^k}} \simeq \hoco_j KU \otimes RU(C_{q^k}),\]
where the maps in the colimit are multiplication by the Euler class of $V$, thought of as a class in degree 0 via Bott periodicity.
In other words, we are inverting the image of the Euler class in $KU\otimes RU(C_{q^k})$.

Under the fixed isomorphism $RU(C_{q^k}) \iso \Z[x]/(x^{q^k}-1)$ from above,
the reduced complex regular representation can be identified as
\[
\overline{\rho} = \overline{\rho}_{C_{q^k}} = x^{q^k-1} + \dots + x.
\]
Then 
\begin{equation}
\label{EulerRho}
e(\overline{\rho}) = e\left( \sum_{i=1}^{q^k-1} x^i \right) = \prod_{i=1}^{q^k-1}( x^i-1).
\end{equation}
The maps in the colimit computing the geometric fixed points are given by multiplying by this Euler class, so it remains to understand the effect of inverting this class in $RU(C_{q^k})$. We carry this out in \cref{LocRUEuler} and \cref{LocRUx-1}.
\end{proof}

Let $V$ and $W$ be complex representations with $KU$-theory Euler classes $e(V)$ and $e(W)$ in $RU(G)$. Recall that $e(V\oplus W) = e(V) \cdot e(W)$, and further, if $V$ is 1-dimensional, then $e(V) = V-1$.

\begin{lemma}\label{LocRUEuler}
The localization $RU(C_{q^k}) [ \frac1{e(\overline{\rho})}]$ is isomorphic to 
$RU(C_{q^k}) [ \frac1{x^{q^{k-1}}-1}]$.
\end{lemma}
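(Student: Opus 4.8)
The plan is to translate the statement into the polynomial model $R := RU(C_{q^k}) \iso \Z[x]/(x^{q^k}-1)$ fixed before the lemma, and then to prove the two divisibilities that pin down the two localizations. Write $f = x^{q^{k-1}}-1$ and, using \eqref{EulerRho}, $g = e(\overline{\rho}) = \prod_{i=1}^{q^k-1}(x^i-1)$. Since $1 \le q^{k-1} \le q^k-1$, the element $f$ occurs literally among the factors of $g$, so $f \mid g$ in $R$; by the universal property of localization this yields a canonical map $R[\tfrac1f] \to R[\tfrac1g]$. To see it is an isomorphism it suffices to show $g$ is already invertible in $R[\tfrac1f]$, and for that it is enough to prove that \emph{each} factor $x^i-1$, for $1 \le i \le q^k-1$, divides a power of $f$ in $R$; in fact I will show $x^i - 1 \mid f$ in $R$.

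The heart of the argument is an exponent reduction inside the quotient ring. Fix $i$ with $1 \le i \le q^k-1$ and let $e$ be the $q$-adic valuation of $i$, so $0 \le e \le k-1$, and write $i = q^e m$ with $\gcd(m,q)=1$. Because $q^e \mid q^{k-1}$, we already have $x^{q^e}-1 \mid x^{q^{k-1}}-1 = f$ in $\Z[x]$, so it suffices to see that $x^i - 1$ divides $x^{q^e}-1$ in $R$. For this, choose $m'$ with $m m' \equiv 1 \pmod{q^{k-e}}$; then $q^e m m' \equiv q^e \pmod{q^k}$, hence $x^{q^e} = (x^i)^{m'}$ in $R$, and therefore
\[
x^{q^e} - 1 = (x^i)^{m'} - 1 = (x^i - 1)\bigl( (x^i)^{m'-1} + \dots + x^i + 1 \bigr)
\]
is a multiple of $x^i - 1$ in $R$. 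Combining the two divisibilities gives $x^i - 1 \mid f$ in $R$, as wanted.

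Putting this together, $g = \prod_{i=1}^{q^k-1}(x^i-1)$ divides $f^{\,q^k-1}$ in $R$ while $f \mid g$, so $f$ and $g$ generate the same radical ideal and $R[\tfrac1f] = R[\tfrac1g]$ as localizations of $R$, which is the lemma. The only genuinely nontrivial step is the exponent reduction in the middle paragraph — the observation that, modulo $x^{q^k}-1$, raising $x^i$ to the power $m'$ (an inverse of $m = i/q^{v_q(i)}$ mod $q^{k-v_q(i)}$) turns $x^i-1$ into a multiple of $x^{q^{v_q(i)}}-1$, and conversely; everything else is bookkeeping. An alternative would be to argue on $\Spec R$ via the factorization of $x^{q^k}-1$ into cyclotomic polynomials $\Phi_{q^j}$ and check $V(f) = V(g)$, but the elementary divisibility argument above is cleaner and avoids invoking the prime structure of $R$.
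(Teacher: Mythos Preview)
Your proof is correct. Both you and the paper reduce to showing that each factor $x^i-1$ becomes a unit after inverting $f=x^{q^{k-1}}-1$, but the arguments diverge at that point. The paper first passes to the quotient $\Z[x]/\rho(k-1)$ via the factorization \eqref{regularfactorization}, then treats the cases $i=q^j$ (divisibility in $\Z[x]$) and $i$ coprime to $q$ (showing $\rho_i=1+x+\dots+x^{i-1}$ is a unit because it shares no complex roots with $\rho(k-1)$) separately; the mixed case $i=q^j m$ with $m>1$ coprime to $q$ is left implicit. Your exponent-inversion trick --- choosing $m'$ with $mm'\equiv 1 \pmod{q^{k-e}}$ so that $(x^i)^{m'}=x^{q^e}$ in $R$ --- handles all $i$ uniformly, stays entirely inside $R$, and avoids both the passage to the quotient ring and the appeal to roots over $\C$. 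It is the more elementary and more complete of the two arguments; the paper's route has the mild advantage that the quotient $\Z[x]/\rho(k-1)$ is set up anyway for the next lemma.
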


\begin{proof}

 Since $x^{q^{k-1}}-1$ is a factor of $e(\overline{\rho})$ according to \cref{EulerRho}, it is clear that inverting the Euler class also inverts $x^{q^{k-1}}-1$. 

Conversely, $\eqref{regularfactorization}$ implies that there is an isomorphism
\begin{equation}\label{RUCp1overxminus1}
 RU(C_{q^k})\left[ \frac1{x^{q^{k-1}}-1}\right] \iso \Z[x]/\rho(k-1) \left[ \frac1{x^{q^{k-1}}-1}\right].
\end{equation} 
For any $j<k-1$, the class $x^{q^j}-1$ divides $x^{q^{k-1}}-1$ and therefore becomes invertible after inverting $x^{q^{k-1}}-1$. It remains to consider $x^i-1$, where $i$ is prime to $q$.
Since $x^i-1 = (x-1)(x^{i-1}+\dots+1)$, and $x-1$ has already been inverted,
it suffices
by \cref{RUCp1overxminus1} to show that $\rho_i = x^{i-1}+\dots+1$ is invertible in 
$\Z[x]/\rho(k-1)$ for $i$ prime to $q$. This follows from the fact that if $i$ is prime to $q$ then $\rho_i$ and $\rho(k-1)$ do not share any common roots (over $\C$).
\end{proof}

\begin{lemma}\label{LocRUx-1}
 The localization $RU(C_{q^k}) [ \frac1{x^{q^{k-1}}-1}]$ is isomorphic to 
 \[ 
 \overline{RU}(C_{q^k}) \left[ \frac1q\right] 
 \iso \Z\left[x,\frac1q\right]/(1+x^{q^{k-1}}+\dots+x^{(q-1)q^{k-1}}),\]
 where $\overline{RU}(C_{q^k})$ is as in \cref{defnRUbar}.
\end{lemma}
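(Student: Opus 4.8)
The plan is to start from the isomorphism $RU(C_{q^k}) \cong \Z[x]/(x^{q^k}-1)$ and the factorization \eqref{regularfactorization}, $x^{q^k}-1 = (x^{q^{k-1}}-1)\cdot\rho(k-1)$. Inverting $x^{q^{k-1}}-1$ is the same as inverting its reduction; I would first observe that modulo $\rho(k-1)$ the class $x^{q^{k-1}}$ is a primitive $q$th root of unity (it satisfies $1+x^{q^{k-1}}+\dots+x^{(q-1)q^{k-1}} = \rho(k-1) \equiv 0$), so $(x^{q^{k-1}}-1)$ is not a zero divisor but is also not a unit in $\Z[x]/\rho(k-1)$. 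The key computation is to identify $\bigl(x^{q^{k-1}}-1\bigr)\bigl(x^{q^{k-1}}-1\bigr)^{\vee}$ or rather the product $\prod_{j=1}^{q-1}\bigl(x^{j q^{k-1}}-1\bigr)$, which is a unit multiple of $q$ in $\Z[x]/\rho(k-1)$: indeed evaluating the identity $\prod_{j=1}^{q-1}(T-\zeta^j) = 1+T+\dots+T^{q-1}$ at $T=1$ gives $q = \prod_{j=1}^{q-1}(1-\zeta^j)$, and $\zeta = x^{q^{k-1}}$ here. Since $x$ is already a unit, each $x^{jq^{k-1}}-1$ differs from $1-x^{q^{k-1}}$ by a unit times $\rho_j$-type cyclotomic factor that is itself a unit mod $\rho(k-1)$ (no shared roots, as in the previous lemma), so inverting $x^{q^{k-1}}-1$ forces $q$ to be inverted, and conversely once $q$ is inverted, $x^{q^{k-1}}-1$ becomes invertible since $q = \prod(1-x^{jq^{k-1}})$ exhibits it as a factor of a unit.

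Concretely I would argue: (1) $RU(C_{q^k})\bigl[\frac1{x^{q^{k-1}}-1}\bigr] \cong \Z[x]/\rho(k-1)\bigl[\frac1{x^{q^{k-1}}-1}\bigr]$ by \eqref{regularfactorization}, since the other factor $x^{q^{k-1}}-1$ is already inverted. (2) In $\Z[x]/\rho(k-1)$, the element $q$ divides a power of $x^{q^{k-1}}-1$ (up to units), via the $q = \prod_{j=1}^{q-1}(1-x^{jq^{k-1}})$ relation and the fact that $\frac{x^{jq^{k-1}}-1}{x^{q^{k-1}}-1}$ is a unit mod $\rho(k-1)$ for $1\le j\le q-1$ — this is exactly the "no common roots over $\C$" argument used in \cref{LocRUEuler}. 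Hence $\Z[x]/\rho(k-1)\bigl[\frac1{x^{q^{k-1}}-1}\bigr] = \Z[x,\frac1q]/\rho(k-1)$. (3) Finally identify $\rho(k-1) = 1 + x^{q^{k-1}} + \dots + x^{(q-1)q^{k-1}}$ under the fixed isomorphism (already recorded after \eqref{defnRUbar}), so the target is $\Z[x,\frac1q]/(1+x^{q^{k-1}}+\dots+x^{(q-1)q^{k-1}})$, which by definition is $\overline{RU}(C_{q^k})[\frac1q]$.

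The main obstacle I anticipate is step (2): showing cleanly that inverting $x^{q^{k-1}}-1$ is equivalent to inverting $q$ in the ring $\Z[x]/\rho(k-1)$. One direction (inverting $q$ makes $x^{q^{k-1}}-1$ invertible) is immediate from the norm relation $q = \prod_{j=1}^{q-1}(1-x^{jq^{k-1}})$ combined with $x^{q^{k-1}}-1$ being a factor (up to the unit $-1$) of the right-hand side. The reverse direction requires knowing that each complementary factor $\rho_j^{(k-1)} := 1 + x^{q^{k-1}} + \dots + x^{(j-1)q^{k-1}}$ for $j$ coprime to $q$ is a unit in $\Z[x]/\rho(k-1)$ — since $\rho(k-1)$ factors over $\C$ into $\prod$ over primitive $q$th roots of unity of $(x - \zeta)$ (pulled back along $x \mapsto x^{q^{k-1}}$... one must be a little careful that $\rho(k-1)$ as a polynomial in $x$ is $\prod_{\zeta^q=1,\zeta\ne1}(x^{q^{k-1}} - \zeta)$, so its roots are the $q^k$th roots of unity that are not $q^{k-1}$th roots of unity), and $\rho_j^{(k-1)}$ vanishes only at roots of $x^{jq^{k-1}}=1$, i.e. $q^{k-1}$th roots of unity (as $\gcd(j,q)=1$), which are disjoint from the roots of $\rho(k-1)$. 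So $\gcd$ of the two polynomials in $\Q[x]$ is $1$, giving a Bézout relation that, after clearing the denominator (a power of $q$, by a resultant computation), shows $\rho_j^{(k-1)}$ is invertible in $\Z[\frac1q][x]/\rho(k-1)$. Assembling these facts yields the claimed isomorphism; I would present the resultant/Bézout bookkeeping tersely since it is routine once the root-disjointness is in hand.
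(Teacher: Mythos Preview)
Your reduction via \eqref{regularfactorization} and the norm identity $q=\prod_{j=1}^{q-1}(1-y^j)$ in $\Z[x]/\rho(k-1)$ (with $y=x^{q^{k-1}}$) are correct, and the direction ``inverting $q$ inverts $y-1$'' follows as you say: if the product is a unit, so is each factor. The gap is in the reverse direction. You need each $\rho_j^{(k-1)}=(1-y^j)/(1-y)$ to be a unit in $\Z[x]/\rho(k-1)$ (or at least in its localization at $y-1$), but your B\'ezout argument only establishes invertibility in $\Z[\tfrac1q][x]/\rho(k-1)$ --- precisely the ring you are trying to reach, so the reasoning is circular. The appeal to \cref{LocRUEuler} does not close this either: that lemma concerns the elements $\rho_i=1+x+\dots+x^{i-1}$, not your $\rho_j^{(k-1)}=1+y+\dots+y^{j-1}$, and for $k\ge 2$ these are genuinely different. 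There is an easy repair inside your framework: since $y^q=1$ in $\Z[x]/\rho(k-1)$ and $\gcd(j,q)=1$, choose $m$ with $jm\equiv 1\pmod q$; then $1-y^j$ divides $1-(y^j)^m=1-y$, so once $y-1$ is inverted each $1-y^j$ is a unit, and hence $q=\prod_j(1-y^j)$ is too.

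By contrast, the paper's proof avoids the norm relation and the attendant unit-tracking altogether. For ``inverting $y-1$ inverts $q$'' it uses the freshman's dream: $(y-1)^q\equiv y^q-1\pmod q$, and $y^q-1=0$ already in $RU(C_{q^k})$, so $q\mid(y-1)^q$. For the converse it simply exhibits the explicit identity
\[
(1-y)\bigl(y^{q-2}+2y^{q-3}+\dots+(q-2)y+(q-1)\bigr)=q-\rho(k-1)=q
\]
in $\Z[x]/\rho(k-1)$. These two one-line computations settle both directions directly, without needing to know which cyclotomic quotients are units over $\Z$.
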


\begin{proof}
According to \cref{RUCp1overxminus1}, it suffices to show that in $\overline{RU}(C_{q^k}) \iso \Z[x]/\rho(k-1)$, inverting $x^{q^{k-1}}-1$ agrees with inverting $q$. For simplicity, we write $y=x^{q^{k-1}}$ in the rest of this argument.

On the one hand, $(y-1)^q \equiv y^q-1 \pmod{q}$. Since $y^q-1=0$ in $RU(C_{q^k})$, we conclude that $(y-1)^q$ is divisible by $q$ in $RU(C_{q^k})$ (and therefore also in the quotient $\overline{RU}(C_{q^k})$). It follows that inverting $y-1$ also inverts $q$. 

On the other hand, we can check directly that
\[ (1-y) \cdot  (y^{q-2} + 2 y^{q-3} + 3 y^{q-4} + ... + (q-2) y + (q-1) ) = -\rho(k-1)+ q = q\]
in $\overline{RU}(C_{q^k}) \cong \Z[x]/\rho(k-1)$.
Therefore inverting the integer $q$ also inverts $y-1$ in the ring $\Z[x]/\rho(k-1)$.
\end{proof}

We now study the geometric fixed points with respect to a non-cyclic $q$-subgroup.

\begin{proposition}
\label{PhiNonCycKUzero}
Suppose that $H\leq G$ is a non-cyclic $q$-subgroup. Then $\Phi^H \KUG \simeq \ast$.
\end{proposition}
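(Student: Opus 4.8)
The plan is to follow the proof of \cref{CyclicGeoFP} almost verbatim, replacing the concluding ring-theoretic computation by a single observation about the character map. First I would reduce to the case $G=H$. Since geometric fixed points commute with restriction along inclusions of subgroups that contain $H$, the underlying spectrum of $\Phi^H\KUG$ coincides with that of $\Phi^H KU_H$; moreover, a genuine $N_GH/H$-spectrum is trivial as soon as all of its geometric fixed points $\Phi^{\bar L}$ vanish, and the composition formula for iterated geometric fixed points rewrites $\Phi^{\bar L}\Phi^H\KUG$ (for $\bar L = L/H$ with $H\leq L\leq N_GH$) in terms of $\Phi^L KU_G$, with $L$ again noncyclic. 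Hence it suffices to prove $\Phi^K KU_K\simeq\ast$ as a spectrum for every noncyclic finite $q$-group $K$, and we may assume $G=H$ is noncyclic.

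Exactly as in \cref{CyclicGeoFP}, I would take $V=\overline{\rho}_\C = \rho_\C-1$, the reduced complex regular representation of $G$. Because $G$ is noncyclic, no element generates $G$, so $V^G=0$ while $V^{G'}\neq 0$ for every proper subgroup $G'\leq G$; thus $S^{\infty V}$ is a model for $\widetilde{E\mathcal{P}_G}$. Since $V$ underlies a complex representation, equivariant Bott periodicity applies just as before, and with \cref{CatFPKU} one obtains
\[
\Phi^G KU_G \simeq \hocolim_j\bigl(KU\otimes RU(G)\bigr) \simeq KU\otimes RU(G)[\tfrac1{e(V)}],
\]
the transition maps being multiplication by the $K$-theory Euler class $e(\overline{\rho}_\C)\in RU(G)$, placed in degree zero via Bott periodicity. (The cyclic hypothesis was used in \cref{CyclicGeoFP} only to factor this Euler class into linear pieces, which we now sidestep.)

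It then remains to show that $e(\overline{\rho}_\C)=0$ in $RU(G)$, for then the displayed localization is the zero ring and $\Phi^G KU_G\simeq\ast$. I would verify this using injectivity of the character map $\chi\colon RU(G)\into\Cl(G,\C)$: for any $g\in G$, the trivial character occurs in $\rho_\C|_{\langle g\rangle}$ with multiplicity $|G|/|\langle g\rangle|\geq q\geq 2$, hence with multiplicity at least $1$ in $\overline{\rho}_\C|_{\langle g\rangle}$, so $1$ is among the eigenvalues $\lambda_i$ of $g$ acting on $\overline{\rho}_\C$ and therefore $\chi(e(\overline{\rho}_\C))(g) = \prod_i(\lambda_i-1)=0$. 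As this holds for every $g$, injectivity of $\chi$ forces $e(\overline{\rho}_\C)=0$.

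I expect the only real friction to be in the first step --- bookkeeping with how $\Phi^H$ interacts with restriction and with iterated geometric fixed points for a possibly non-normal $H$ --- whereas the mathematical content, the vanishing of $e(\overline{\rho}_\C)$, is the one-line character computation above. If one only wants the statement at the level of underlying spectra, which is all that is used subsequently (e.g.\ in \cref{sec:pneqq}), even that bookkeeping largely disappears.
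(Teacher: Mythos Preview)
Your argument is correct and, in the key step, more direct than the paper's. Both proofs begin by reducing to the case $G=H$, and both compute $\Phi^G KU_G$ as the localization of $KU\otimes RU(G)$ at the relevant Euler class. From there the arguments diverge. The paper exploits the surjection $G\twoheadrightarrow C_q\times C_q$ available for any noncyclic $q$-group, constructs a ring map $\Phi^{C_q\times C_q}KU_{C_q\times C_q}\to\Phi^G KU_G$ using the global structure of $KU$, and then shows the source vanishes by an explicit computation in $RU(C_q\times C_q)\cong\Z[x,y]/(x^q-1,y^q-1)$: after inverting $x-1$ one has a primitive $q$th root of unity, and inverting the Euler classes $e(yx^i)$ forces $y^q-1=0$ to become a unit. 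Your approach bypasses both the reduction and the explicit computation by observing that $e(\overline{\rho}_\C)=0$ already in $RU(G)$, via the character map and the single fact that in a noncyclic group every element generates a proper subgroup.

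What each buys: your route is shorter, works uniformly for all noncyclic $G$ without invoking the global structure of $KU$ or any quotient, and makes transparent exactly where noncyclicity enters. The paper's route isolates $C_q\times C_q$ as the minimal obstruction and records, along the way, a concrete description of the ring map on geometric fixed points induced by a group surjection --- machinery that could be of independent interest even if, as the remark following the paper's proof observes, it is more than strictly necessary here. Your comment about the sign of the Euler class character ($\prod(\lambda_i-1)$ versus $\prod(1-\lambda_i)$) is harmless for the vanishing conclusion, and your choice of $V=\overline{\rho}_\C$ rather than $\overline{\rho}_\R$ is a clean way to avoid the parity case split in \cref{CyclicGeoFP}.
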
 

\begin{proof}
By restriction, it suffices to consider the case $H=G$.
If $G$ is not cyclic, then it admits a surjection to $C_q\times C_q$. This induces a ring map $\Phi^{C_q\times C_q} KU_{C_q \times C_q} \rtarr \Phi^G \KUG$ as follows.

More generally, given a surjection $q_N \colon G \rtarr G/N$, there is a canonical map of $G$-spaces $q_N^* \widetilde{E\cP}_{G/N} \rtarr \widetilde{E\cP}_G$. 
Moreover, as $KU$ is a global ring spectrum, it comes equipped with a  map of ring $G$-spectra 
$\mathrm{inf}_{G/N}^G  KU_{G/N} \rtarr KU_G$ (see \cite{LMS}*{II.8.5}).
Adjoint to this is a
map of ring $G/N$-spectra $\xi\colon KU_{G/N} \rtarr (KU_G)^N$.
Then the desired map on geometric fixed points is
\[ 
    \begin{tikzcd}
        \Phi^{G/N} KU_{G/N} = \left( \widetilde{EP}_{G/N} \wedge KU_{G/N} \right)^{G/N} 
        \ar[d,"\id \wedge \xi"] & \left(\widetilde{EP}_G \wedge KU_G\right)^G = \Phi^G KU_G,
          \\
         \left(  \widetilde{EP}_{G/N} \wedge (KU_{G})^N \right)^{G/N} \ar[r,"\sim"]
         & \left( q^*\widetilde{EP}_{G/N} \wedge KU_G \right)^G \ar[u]
    \end{tikzcd}
\]
where the equivalence on the second row is the projection formula (see \cite{BalmerSanders}*{2.(C)} or \cite{HKBPO}*{Lemma~2.13}).

It now remains to show that $\Phi^{C_q\times C_q}KU_{C_q\times C_q}\simeq \ast$. The geometric fixed points are computed by inverting the Euler classes of nontrivial irreducible representations of $C_q \times C_q$ in $KU\otimes RU(C_q\times C_q)$. Now 
\[ 
RU(C_q\times C_q) \iso \Z[x,y]/(x^q-1,y^q-1).
\]
According to \cref{LocRUx-1}, inverting the Euler class $x-1$ gives
\[
RU(C_q\times C_q)\left[\frac1{x-1}\right] \iso \Z\left[x,y,\frac1q\right]/(x^{q-1}+\dots+x+1,y^q-1).
\]
In this localization, $x$ is a $q$th root of unity, so that
\[ 
y^q-1 = \prod_{i=0}^{q-1} (y-x^i) =
x^{\frac{q(q-1)}2} \prod_{i=0}^{q-1} (y x^{q-i} -1) =
\prod_{i=0}^{q-1} e(y x^{q-i}).
\]
Thus inverting the Euler classes $e(yx^{q-i})$ will invert $y^q-1$, which is zero in $RU(C_q\times C_q)$. It follows that the localization is zero.
\end{proof}

\begin{remark}
In the proposition above, the map of ring spectra $\Phi^{C_q\times C_q} KU_{C_q \times C_q} \rtarr \Phi^G \KUG$ is strictly more than we need to prove the result. It suffices to know that the Euler classes inverted in $RU(C_q \times C_q)$ in the formula for $\pi_0(\Phi^{C_q\times C_q} KU_{C_q \times C_q})$ are inverted in $RU(G)$ in the formula for $\pi_0(\Phi^G \KUG)$. Given this, it follows that $\pi_0(\Phi^G \KUG) = 0$, which implies that $\Phi^G \KUG \simeq *$ as it is a commutative ring spectrum.
\end{remark}


\section{The character of the equivariant Bott classes}
\label{sec:Bott}

Let $V$ be a finite-dimensional complex representation of the finite group $G$.
The Thom isomorphism in equivariant complex $K$-theory is a canonical isomorphism of $RU(G)$-modules
\[
\KUG^0(*) \cong \widetilde{KU}_G^0(S^{V}),
\]
where $S^{V}$ is the representation sphere associated to $V$. This isomorphism is given by multiplication by the equivariant Bott class $\beta^{V}$ (see \cite{Seg}*{Section 3} and \cite{Alaska}*{Section XIV.4}).
Thus $\widetilde{KU}_G^0(S^V)$ is a free module of rank one over $RU(G)$ on the class $\beta^{V}$:
\[
\widetilde{KU}_G^0(S^{V}) \cong RU(G)\{\beta^{V}\}.
\]
This algebraic statement is a consequence of the topological statement of Bott periodicity in the proof of \cref{CyclicGeoFP}. Given two finite dimensional complex $G$-representations $V$ and $W$, the canonical isomorphism of $RU(G)$-modules
\[
\widetilde{KU}_G^0(S^{V}) \otimes_{RU(G)} \widetilde{KU}_G^0(S^{W}) \cong \widetilde{KU}_G^0(S^{V \oplus W})
\]
sends $\beta^{V} \otimes \beta^{W}$ to $\beta^{V \oplus W}$.

Let $\rho_G$ be the complex regular representation of $G$ and let
\[
\chi \colon \widetilde{KU}_G^0(S^{\rho_G}) \rtarr \prod_{[g]} \widetilde{H}^0((S^{\rho_G})^g, \C[\beta,\beta^{-1}])
\]
be the Atiyah--Segal character map \cite{AtSe} applied to the finite $G$-CW complex $S^{\rho_G}$.

\begin{prop}
\label{CharacterBottRhoG}
The image of $\beta^{\rho_G}$ under the Atiyah--Segal character map $\chi$ is the ``class function" $\chi(\beta^{\rho_G})$ sending
\[
[g] \mapsto (|g|\beta)^{|G|/|g|}.
\]
\end{prop}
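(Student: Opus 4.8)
The plan is to reduce the computation to an explicit identification of the fixed-point spaces $(S^{\rho_G})^g$ together with an understanding of how the Bott class restricts, and then to use the naturality and multiplicativity of the Atiyah--Segal character map. First I would recall that the Atiyah--Segal character map is defined componentwise over conjugacy classes $[g]$ by first restricting along $\langle g\rangle \hookrightarrow G$, then taking $\langle g\rangle$-geometric (or Borel) fixed points, and finally evaluating the resulting nonequivariant $K$-theory class via the ordinary Chern character (which over $\C[\beta,\beta^{-1}]$ is just the rank on each component). So the entire statement is local in $g$: it suffices to compute the image of $\beta^{\rho_G}$ in $\widetilde{H}^0((S^{\rho_G})^g,\C[\beta,\beta^{-1}])$ for each $g$. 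Because the character map is multiplicative on Bott classes --- $\chi$ sends $\beta^{V\oplus W}$ to the product of $\chi(\beta^V)$ and $\chi(\beta^W)$, by the multiplicativity of the Thom/Bott classes recalled just above --- and because $\rho_G$ decomposes under $\langle g\rangle$ as a sum of copies of $\rho_{\langle g\rangle}$ (namely $|G|/|g|$ copies, since $\Res^G_{\langle g\rangle}\rho_G \cong (|G|/|g|)\cdot \rho_{\langle g\rangle}$), I can further reduce to the case $G = \langle g\rangle = C_m$ cyclic with $g$ a generator, and then raise to the $|G|/|g|$ power at the end.

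So the heart of the argument is the cyclic case: compute $\chi(\beta^{\rho_{C_m}})$ evaluated at a generator $g$ of $C_m$. Here $(S^{\rho_{C_m}})^g = S^0$ since $\rho_{C_m}^g = \rho_{C_m}^{C_m} = \C$ has real dimension... wait, actually $\rho_{C_m}^{C_m}$ is the one-dimensional trivial summand, so $(S^{\rho_{C_m}})^g = S^{\C} = S^2$, contributing a single copy of $\C[\beta,\beta^{-1}]$ in degree $0$ after the Thom iso. Next I would use the standard formula for the character of a $K$-theory Euler/Bott class of a complex line on which $g$ acts by a primitive root of unity $\zeta$: the Bott class of a character line $L_\zeta$ has Atiyah--Segal character at $g$ equal to $1 - \zeta^{-1}$ (the Euler class $e(L_\zeta) = L_\zeta - 1$ evaluated at $g$), and more precisely the \emph{Thom class} contributes the factor $(1-\zeta^{-1})$, so that multiplying over all the nontrivial characters $\zeta = \omega, \omega^2,\dots,\omega^{m-1}$ of $C_m$ appearing in $\bar\rho_{C_m}$ gives $\prod_{j=1}^{m-1}(1 - \omega^{-j})$, which is the value of the cyclotomic-type polynomial and equals $m$ (the standard identity $\prod_{j=1}^{m-1}(1-\omega^j) = m$). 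Keeping track of the Bott periodicity bookkeeping — each character line contributes a shift, and one reassembles these via equivariant Bott periodicity exactly as in the proof of \cref{CyclicGeoFP} — the trivial summand contributes the "$\beta$" and the $m-1$ nontrivial summands contribute the numerical factor $m = |g|$ together with the appropriate power of $\beta$, yielding $\chi(\beta^{\rho_{C_m}})(g) = |g|\beta$ (or $(|g|\beta)^1$, consistent with $|G|/|g| = 1$ here).

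Finally I would assemble the general case: restricting $\beta^{\rho_G}$ along $\langle g\rangle \hookrightarrow G$ identifies it with $\beta^{\Res\rho_G} = \beta^{(|G|/|g|)\rho_{\langle g\rangle}} = (\beta^{\rho_{\langle g\rangle}})^{|G|/|g|}$ under the multiplicativity isomorphism, so $\chi(\beta^{\rho_G})(g) = (\chi(\beta^{\rho_{\langle g\rangle}})(g))^{|G|/|g|} = (|g|\beta)^{|G|/|g|}$, as claimed. The main obstacle I anticipate is not any single computation but rather carefully normalizing the Bott-periodicity identifications so that the numerical factor really is $|g|$ rather than $\pm |g|$ or a root-of-unity multiple of it: the product $\prod_{j=1}^{m-1}(1-\omega^{-j})$ must be pinned down to be exactly $m$ (not $\pm m$), and one must check that the degree shifts from each complex summand are absorbed consistently into a single power of $\beta$ matching the exponent $|G|/|g|$. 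I would handle this by working with a fixed primitive root of unity and the explicit model $\widetilde{E\mathcal{P}}\wedge(-)$ for geometric fixed points as in \cref{sec:geometric}, where the Euler-class computation $e(\bar\rho_{C_m}) = \prod_{i=1}^{m-1}(x^i-1)$ already appears and specializes correctly under the character map.
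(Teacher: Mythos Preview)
Your proposal is correct and follows essentially the same route as the paper. Both arguments restrict to the cyclic subgroup $\langle g\rangle$, split the (copies of the) regular representation into its trivial and reduced parts, use that the Thom/Bott class restricts to the Euler class along the zero section, and evaluate $e(\bar\rho_{\langle g\rangle})=\prod_{i=1}^{|g|-1}(x^i-1)$ at a primitive root of unity to get $|g|$; the only organizational difference is that you first reduce by multiplicativity to a single copy of $\rho_{\langle g\rangle}$ and raise to the $|G|/|g|$ power at the end, whereas the paper tracks the $|G|/|g|$ copies throughout. Your flagged concern about the sign in $\prod_i(\zeta^i-1)$ versus $\prod_i(1-\zeta^i)$ is exactly the point where the paper's conventions (with $e(L)=L-1$) and yours (with $1-\zeta^{-1}$) diverge, but the paper handles it by the same cyclotomic identity you invoke.
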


\begin{proof}
Fix a conjugacy class $[g] \subseteq G$ and $g \in [g]$. We will describe the part of the character map
\[
\chi \colon \widetilde{KU}_G^0(S^{\rho_G}) \to  \widetilde{H}^0((S^{\rho_G})^g, \C[\beta,\beta^{-1}])
\]
associated to the conjugacy class $[g]$. Assume $|g| = k$, let $m = |G|/k$, and let $\Z/k \to G$ pick out $g$. Note that $(S^{\rho_G})^g$ is homeomorphic to $S^{2m}$. The Atiyah--Segal character map factors in the following way:
\[
\begin{split}
\widetilde{KU}_G^0(S^{\rho_G}) &\to \widetilde{KU}_{\Z/k}^0(S^{\rho_G}) \\ 
&\to \widetilde{KU}_{\Z/k}^0((S^{\rho_G})^g) \\
&\cong RU(\Z/k) \otimes \widetilde{KU}^0((S^{\rho_G})^g)\\
&\to \C \otimes \widetilde{KU}^0((S^{\rho_G})^g)\\
&\cong \widetilde{H}^0((S^{\rho_G})^g, \C[\beta,\beta^{-1}]),
\end{split}
\]
where the first map is induced by restriction along $\Z/k \to G$ picking out $g$, the second map is restriction along the inclusion $(S^{\rho_G})^g \to S^{\rho_G}$, the following isomorphism is due to the fact that the $\Z/k$-action on the fixed points is trivial, and the next map is induced by any map $RU(\Z/k) \to \C$ picking out a primitive $k$th root of unity.

We will trace $\beta^{\rho_G}$ through these maps. There is a commutative diagram
\[
\xymatrix{\widetilde{KU}_{\Z/k}^0(S^{\rho_G}) \ar[r]^-{\cong} & \widetilde{KU}_{\Z/k}^0(S^{m\rho_{\Z/|g|}}) \ar[r]^-{\cong} \ar[d] & \widetilde{KU}_{\Z/k}^0(S^{m\overline{\rho}_{\Z/k}}) \otimes_{RU(\Z/k)} \widetilde{KU}_{\Z/k}^0(S^{2m}) \ar[d] \\ & \widetilde{KU}_{\Z/k}^0((S^{m\rho_{\Z/k}})^{\Z/k}) \ar[r]^-{\cong} & \widetilde{KU}_{\Z/k}^0(S^{0}) \otimes_{RU(\Z/k)} \widetilde{KU}_{\Z/k}^0(S^{2m}).}
\]
We may trace $\beta^{\rho_G}$ through this diagram:
\[
\xymatrix{\beta^{\rho_G} \ar@{|->}[r] & \beta^{m\rho_{\Z/k}} \ar@{|->}[r] & \beta^{m\overline{\rho}_{\Z/k}} \otimes \beta^m \ar@{|->}[d] \\  & e(m\overline{\rho}_{\Z/k})\beta^m & e(m\overline{\rho}_{\Z/k}) \otimes \beta^m.  \ar@{|->}[l]}
\]
As restriction along $\Z/k \to G$ sends $\rho_G$ to $m\rho_{\Z/k}$, the top left isomorphism sends $\beta^{\rho_G}$ to $\beta^{m\rho_{\Z/k}}$. The right vertical mapping follows from the fact that the element $\beta^{m\overline{\rho}_{\Z/k}} \in \widetilde{KU}_{\Z/k}^0(S^{m\overline{\rho}_{\Z/k}})$ is the Thom class for $m\overline{\rho}_{\Z/k}$ and the vertical restriction map is restriction along the zero section.

Finally, the map $RU(\Z/k) \to \C$ sends the Euler class $e(m\overline{\rho}_{\Z/k})$ to $k^m$. This is because $e(m\overline{\rho}_{\Z/k}) = e(\overline{\rho}_{\Z/k})^m$ and, after fixing an isomorphism $RU(\Z/k) \cong \Z[x]/(x^k-1)$, 
we have 
\[
e(\overline{\rho}_{\Z/k}) = \prod_{i=1}^{k-1} (x^i-1). 
\] 
Setting $x = \zeta_{k}$, we get $k$ as this is the same value we get by setting $y=1$ in $(1-y^k)/(1-y)$.
\end{proof}

\section{Stable Adams operations}
\label{sec:stable}

Throughout this section, we fix an odd prime $q$ and assume that $G$ is a $q$-group. Further, let $\ell$ be a primitive root modulo $|G|$. This implies that, for any $g \in G$, the subgroup generated by $g$ is equal to the subgroup generated by $g^\ell$. Also, recall \cite{AtKthy}*{Proposition~3.2.2} that the action of the Adams operation $\psi^\ell$ on the ordinary Bott class $\beta$ is given by $\psi^\ell(\beta) = \ell \beta$.

The Adams operation $\psi^\ell \colon \widetilde{KU}_G^0(S^{\rho_G}) \to \widetilde{KU}_G^0(S^{\rho_G})$ extends to a ring endomorphism on the target of the Atiyah--Segal character map:
\[
\psi^\ell \colon \prod_{[g]} \widetilde{H}^0((S^{\rho_G})^g, \C[\beta,\beta^{-1}]) \to \prod_{[g]} \widetilde{H}^0((S^{\rho_G})^g, \C[\beta,\beta^{-1}]).
\]
An explicit formula for this map was given in \cite{BBS}*{Corollary 4.5}. Applying this formula to \cref{CharacterBottRhoG} gives
\begin{equation}
\label{FormulaPsiellCharacter}
\psi^\ell(\chi(\beta^{\rho_G}))\, \colon [g] \mapsto (|g^\ell|\ell\beta)^{|G|/|g^\ell|}.
\end{equation}

Our goal now is to compute $\psi^\ell(\beta^{\rho_G})$ under the hypotheses above. 
Compatibility of the character map with this formula for $\psi^\ell$ means that $\psi^\ell(\chi(\beta^{\rho_G})) = \chi(\psi^\ell(\beta^{\rho_G}))$. 
Since $|g| = |g^\ell|$, by \cref{CharacterBottRhoG,FormulaPsiellCharacter} we have 
\[
\chi ( \psi^\ell (\beta^{\rho_G})) = \ell^{|G|/|\bullet|} \chi(\beta^{\rho_G})
\]
where $\ell^{|G|/|\bullet|}$ sends $[g]$ to $\ell^{|G|/|g|}$.

Since the Atiyah-Segal character map $\chi$ is injective (as the $RU(G)$-modules are free), it suffices to find the finite dimensional $G$-representation with character $\ell^{|G|/|\bullet|}$.
Consider
the permutation representation
\[
\ell^{\otimes G} =  \C\{\Set(G,\underline{\ell})\}, 
\]
where $\underline{\ell}$ is a set of size $\ell$ with trivial $G$-action.
Then, for $g\in G$, the $g$-fixed points of the $G$-set $\Set(G,\underline{\ell})=\underline{\ell}^G$ are $\underline{\ell}^{G/\langle g \rangle}$.
Since the character of a permutation representation counts the cardinality of the fixed points,
$\chi(\ell^{\otimes G}) = \ell^{|G|/|\bullet|}$.
We have proved the following proposition:

\begin{prop}
Assume that $G$ is an odd $q$-group and $\ell$ is a primitive root modulo $|G|$. Let 
\[
\psi^\ell \colon \widetilde{KU}_G^0(S^{\rho_G}) \to \widetilde{KU}_G^0(S^{\rho_G})
\]
be the $\ell$th Adams operation. Then 
\[
\psi^\ell(\beta^{\rho_G}) =  \ell^{\otimes G} \beta^{\rho_G}.
\]
\end{prop}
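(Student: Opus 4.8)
The plan is to reduce the identity to a computation of characters, exploiting that the Atiyah--Segal character map $\chi$ is injective on $\widetilde{KU}_G^0(S^{\rho_G})$. By the Thom isomorphism recalled at the start of \cref{sec:Bott}, $\widetilde{KU}_G^0(S^{\rho_G})$ is a free $RU(G)$-module of rank one on $\beta^{\rho_G}$; hence $\chi$ is a monomorphism of $RU(G)$-modules, and it suffices to check that $\chi(\psi^\ell(\beta^{\rho_G})) = \chi(\ell^{\otimes G}\beta^{\rho_G})$. Since $\chi$ is $RU(G)$-linear and $\ell^{\otimes G}\in RU(G)$, the right-hand side equals $\chi(\ell^{\otimes G})\cdot\chi(\beta^{\rho_G})$, so the claim becomes an equality of ``class functions'' valued in $\prod_{[g]}\widetilde{H}^0((S^{\rho_G})^g,\C[\beta,\beta^{-1}])$.

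First I would compute the left-hand side. The character map intertwines $\psi^\ell$ on $\widetilde{KU}_G^0(S^{\rho_G})$ with the ring endomorphism $\psi^\ell$ of the Atiyah--Segal target whose explicit formula is \cite{BBS}*{Corollary~4.5}, so $\chi(\psi^\ell(\beta^{\rho_G})) = \psi^\ell(\chi(\beta^{\rho_G}))$. Substituting the formula of \cref{CharacterBottRhoG} into \eqref{FormulaPsiellCharacter} and using that $\ell$, being a primitive root modulo $|G|=q^n$, is coprime to $q$ --- so that $\langle g^\ell\rangle = \langle g\rangle$, and in particular $|g^\ell| = |g|$, for every $g\in G$ --- I get that $\chi(\psi^\ell(\beta^{\rho_G}))$ is the class function $[g]\mapsto \ell^{|G|/|g|}(|g|\beta)^{|G|/|g|} = \ell^{|G|/|g|}\,\chi(\beta^{\rho_G})([g])$. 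In other words $\chi(\psi^\ell(\beta^{\rho_G})) = \ell^{|G|/|\bullet|}\cdot\chi(\beta^{\rho_G})$, where $\ell^{|G|/|\bullet|}$ denotes the class function $[g]\mapsto \ell^{|G|/|g|}$.

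It remains to identify $\ell^{|G|/|\bullet|}$ as $\chi(\ell^{\otimes G})$. The representation $\ell^{\otimes G} = \C\{\Set(G,\underline{\ell})\}$ is a permutation representation, so its character at $g$ is the number of $g$-fixed points of the $G$-set $\Set(G,\underline{\ell}) = \underline{\ell}^G$; these are exactly the functions constant on $\langle g\rangle$-orbits, i.e.\ $\underline{\ell}^{G/\langle g\rangle}$, a set of cardinality $\ell^{|G|/|g|}$. Hence $\chi(\ell^{\otimes G}) = \ell^{|G|/|\bullet|}$, and combining with the previous paragraph yields $\chi(\psi^\ell(\beta^{\rho_G})) = \chi(\ell^{\otimes G})\cdot\chi(\beta^{\rho_G}) = \chi(\ell^{\otimes G}\beta^{\rho_G})$. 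Injectivity of $\chi$ then gives $\psi^\ell(\beta^{\rho_G}) = \ell^{\otimes G}\beta^{\rho_G}$.

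I expect the only real content to sit upstream --- in \cref{CharacterBottRhoG} and in importing the \cite{BBS} formula for $\psi^\ell$ on the character target together with its compatibility with $\chi$. Granting those, this final assembly is elementary representation theory, the one point needing (trivial) care being the equality $|g^\ell| = |g|$, which is precisely where the hypothesis that $\ell$ is a primitive root modulo $|G|$ (equivalently, coprime to $q$) is used.
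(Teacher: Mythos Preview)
Your proof is correct and follows essentially the same argument as the paper: reduce to characters via the injectivity of $\chi$, apply the \cite{BBS} formula together with \cref{CharacterBottRhoG} and the equality $|g^\ell|=|g|$, and then identify $\ell^{|G|/|\bullet|}$ as the character of the permutation representation $\ell^{\otimes G}$. One small slip in your commentary: being a primitive root modulo $|G|$ is strictly stronger than being coprime to $q$, not equivalent --- though only coprimality is needed for $|g^\ell|=|g|$, so the argument is unaffected.
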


It follows from \cite[Proposition 2.1.2]{tomDieck} that, when $\ell$ is coprime to $|G|$, $\ell^{\otimes G}$ is an invertible element in $RU(G)[\ell^{-1}]$.

\begin{prop}
Assume that $G$ is an odd $q$-group and $\ell$ is a primitive root modulo $|G|$. 
Then the Adams operation 
\[
\psi^\ell \colon RU(G)[\ell^{-1}] \to RU(G)[\ell^{-1}] 
\]
extends to a map of equivariant ring spectra
\[
\psi^\ell \colon \KUG[\ell^{-1}] \to \KUG[\ell^{-1}].
\]
\end{prop}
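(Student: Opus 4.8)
The plan is to bootstrap the statement from the degree-$0$ case by a Bott-periodicity argument, using the description of $\psi^\ell$ on the Bott class $\beta^{\rho_G}$ obtained in the previous proposition. First I would recall that $\KUG$ is a $2$-periodic equivariant ring spectrum whose periodicity is witnessed, after choosing a complex $G$-representation $V$, by an equivalence $\Sigma^{V}\KUG \simeq \Sigma^{2\dim_\C V}\KUG$ implemented by multiplication by the equivariant Bott class $\beta^{V} \in \widetilde{KU}_G^0(S^V)$ (exactly as used in the proof of \cref{CyclicGeoFP}). Taking $V = \rho_G$, the regular representation, one gets a self-map of $\KUG$ (up to the shift $\Sigma^{2|G|}$) encoded by the unit $\beta^{\rho_G}$. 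The point is that, after inverting $\ell$, the element $\ell^{\otimes G}$ is a unit in $RU(G)[\ell^{-1}]$ by the cited consequence of \cite[Proposition 2.1.2]{tomDieck}, so the previous proposition says $\psi^\ell$ multiplies $\beta^{\rho_G}$ by a unit; this will let the ``unstable'' Adams operation on $\KUG^0$ be spread out compatibly across all the suspensions $\Sigma^{2j|G|}$.

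Concretely, the key steps, in order, are: (1) Observe that $\psi^\ell$ is a well-defined natural ring endomorphism of the $RU(G)[\ell^{-1}]$-valued cohomology theory $\KUG[\ell^{-1}]$ represented on finite $G$-CW complexes, i.e.\ of the cohomology theory $X \mapsto \KUG^*(X)[\ell^{-1}]$; this is the classical Adams operation, whose existence on $\KUG$ itself follows from the $\lambda$-ring structure, and one only needs to check that it commutes with the localization at $\ell$, which is immediate since it is additive. (2) Show that $\psi^\ell$ commutes (up to the scalar $\ell^{|G|}$, which is a unit after inverting $\ell$) with the periodicity isomorphism given by $\beta^{\rho_G}$: by the previous proposition $\psi^\ell(\beta^{\rho_G} \cdot u) = \ell^{\otimes G}\,\beta^{\rho_G}\,\psi^\ell(u)$, and since $\ell^{\otimes G}$ is a unit in $RU(G)[\ell^{-1}]$, multiplication by $\beta^{\rho_G}$ intertwines $\psi^\ell$ with $(\ell^{\otimes G})^{-1}\psi^\ell$ up to units; more usefully, the composite $(\ell^{\otimes G})^{-1}\cdot\psi^\ell$ commutes with $\beta^{\rho_G}$-multiplication on the nose. (3) Conclude that $\psi^\ell$ (or its twist by the unit $(\ell^{\otimes G})^{-1}$) assembles into a map of representing spectra: since $\KUG[\ell^{-1}]$ is built as the $\Z$-indexed colimit/limit of its even suspensions glued by $\beta^{\rho_G}$, a self-natural-transformation of the associated $RU(G)[\ell^{-1}]$-cohomology theory that is compatible with these gluings yields, by Brown representability in $Sp^G$ (or directly by the fact that $\KUG$ is an $RU(G)$-module object in $Sp^G$ and $\psi^\ell$ is an $RU(G)$-semilinear module endomorphism after inverting $\ell$, hence extends over the Bott tower), a self-map of $G$-spectra $\psi^\ell\colon \KUG[\ell^{-1}] \to \KUG[\ell^{-1}]$. (4) Finally, check that this spectrum-level map is a ring map: it is multiplicative on $\pi_0$ of all fixed points by the ring-map property of the unstable $\psi^\ell$, and multiplicativity on homotopy groups together with the even concentration of $\pi_*\KUG$ upgrades to an $E_\infty$ (or at least homotopy-ring) structure; for the purposes of this paper a map of homotopy ring $G$-spectra suffices.

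The main obstacle will be step (3): turning a natural operation on the cohomology theory into an actual map of $G$-spectra, compatibly with the periodicity witness $\beta^{\rho_G}$. The subtlety is that $\psi^\ell$ is \emph{not} $\KUG$-linear (it is a ring endomorphism, not a module map over the original ring structure), so one cannot simply invoke $\KUG$-module structure; instead one must realize $\KUG[\ell^{-1}]$ as a filtered homotopy colimit of shifts of a ``genuine'' $\KUG^0(*)[\ell^{-1}] = RU(G)[\ell^{-1}]$-module spectrum along $\beta^{\rho_G}$-multiplication, observe that $\psi^\ell$ respects this filtered system up to the unit $\ell^{\otimes G}$ (this is precisely where the previous proposition and the invertibility of $\ell^{\otimes G}$ after inverting $\ell$ are essential), and then pass to the colimit. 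One clean way to package this: use that $\KUG$ is obtained from its connective cover / its $0$-th Postnikov section $H_G\mf{RU}$ by a Bott-periodic construction, or appeal to the fact that the $\Z/2$-graded sheaf of rings $RU(G)[\beta^{\pm 1}]$ admits $\psi^\ell$ as an endomorphism (sending $\beta \mapsto \ell\beta$, which matches $\psi^\ell(\beta^{\rho_G}) = \ell^{\otimes G}\beta^{\rho_G}$ after identifying $\beta^{\rho_G}$ with $\beta^{|G|}$ up to a unit), and then realize this algebraic endomorphism topologically because $\KUG$ is formal in an appropriate sense or because one can lift along the Adams operation on nonequivariant $KU$ together with Segal's splitting $(\KUG)^H \simeq KU \otimes RU(H)$ of \cref{CatFPKU}, on each of which $\psi^\ell$ is classically known to be realized. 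Everything else --- naturality, the ring property, compatibility with the localization --- is routine.
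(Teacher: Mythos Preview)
Your proposal is correct and rests on the same key computation as the paper: after inverting $\ell$, the element $\ell^{\otimes G}$ is a unit in $RU(G)[\ell^{-1}]$, so the twist $(\ell^{\otimes G})^{-1}\psi^\ell$ commutes with multiplication by $\beta^{\rho_G}$ and hence with the structure maps of the spectrum.

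The difference is entirely in the packaging of step (3). The paper avoids filtered colimits, Brown representability, formality, and Segal's splitting altogether: it works directly with the representing $G$-space $\Z\times B_GU$ for $\KUG$, notes that the unstable Adams operation is already a self-map of this $G$-space (hence of its $\ell$-localization), and then checks a \emph{single} homotopy-commutative square expressing compatibility with the structure map $S^{\rho_G}\wedge(\Z\times B_GU)[\ell^{-1}]\to(\Z\times B_GU)[\ell^{-1}]$. Setting the dashed arrow to be $f=\psi^\ell/\ell^{\otimes G}$ and tracing the universal class through both composites gives the commutativity in one line. This is exactly your step (2), but used once rather than iterated over a tower; it sidesteps the issues you flag about $\psi^\ell$ not being $\KUG$-linear, since one only needs compatibility with one Bott-type structure map, not module linearity. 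Your alternative suggestions in step (3) (formality of $\KUG$, lifting along Segal's fixed-point splitting) are not needed and would each require additional justification; the direct space-level argument is both shorter and more elementary.
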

\begin{proof}
Since $\psi^\ell$ is a cohomology operation and $\Z \times B_GU$ represents equivariant complex $K$-theory, $\psi^{\ell}$ gives a map of $G$-spaces
\[
\psi^{\ell} \colon \Z \times B_GU \to \Z \times B_GU.
\]
Since $\psi^\ell$ is a ring map on $\pi_0$, it induces a map $\psi^\ell \colon (\Z \times B_GU)[\ell^{-1}] \to (\Z \times B_GU)[\ell^{-1}]$. To show that $\psi^\ell$ induces a map of equivariant cohomology theories, it suffices to show that $\psi^{\ell}$ can be extended to commute (up to homotopy) with the structure map for the equivariant spectrum:
\[
\xymatrix{S^{\rho_G} \wedge (\Z \times B_GU)[\ell^{-1}] \ar[r] \ar[d]_{1 \wedge \psi^\ell} & (\Z \times B_GU)[\ell^{-1}] \ar@{-->}[d]^{f} \\ S^{\rho_G} \wedge (\Z \times B_GU)[\ell^{-1}] \ar[r] & (\Z \times B_GU)[\ell^{-1}].}
\]
The structure map
\[
S^{\rho_G} \wedge (\Z \times B_GU)[\ell^{-1}] \to (\Z \times B_GU)[\ell^{-1}]
\]
is induced by the equivariant Bott map $\beta^{\rho_G} \colon S^{\rho_G} \to (\Z \times B_GU)[\ell^{-1}]$. To find $f$ such that the square commutes, it suffices to understand the two ways of going around the square on the universal map $u \colon \Z \times B_GU \to \Z \times B_GU[\ell^{-1}]$.

The two ways of going around the square give us $\beta^{\rho_G}\psi^\ell(u)$ and $f(\beta^{\rho_G}u)$. As $\ell$ has been inverted, we may set $f = \psi^\ell/\ell^{\otimes G}$, then
\[
\begin{split}
f(\beta^{\rho_G}u) &= \psi^\ell(\beta^{\rho_G}u)/\ell^{\otimes G} \\
&= \psi^\ell(\beta^{\rho_G})\psi^\ell(u)/\ell^{\otimes G} \\
&= \ell^{\otimes G}\beta^{\rho_G}\psi^\ell(u)/\ell^{\otimes G}\\
&= \beta^{\rho_G}\psi^\ell(u).
\end{split}
\]

\end{proof}

\section{The fiber of $\psi^\ell-1$}
\label{sec:fiber}

The goal of this section is to prove \cref{FiberSequenceProp,prop:maincalcp=q}, identifying the fiber of the map of equivariant spectra
\begin{equation}
\label{eq:psil1}
(\KUG)^{\wedge}_q \lra{\psi^\ell-1} (\KUG)^{\wedge}_q
\end{equation}
and identifying $\mf\pi_0$ of the fiber when $G$ is an odd $q$-group.

We begin with a lemma:

\begin{lemma}
    \label{FiberIsLocal}
If $R$ is an equivariant ring spectrum, then the $p$-completion $R^\wedge_p$ is $R/p$-local.
\end{lemma}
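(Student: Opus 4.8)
The plan is to verify directly that $T := R^{\wedge}_p = L_{M(p)_G} R$ is $R/p$-local, i.e.\ that the function $G$-spectrum $F(C, T)$ is contractible for every $R/p$-acyclic $G$-spectrum $C$. Two properties of the localization will be used: $T$ is $M(p)_G$-local, and the canonical map $R \to T$ is an $M(p)_G$-equivalence, so smashing with $M(p)_G$ yields an equivalence $R/p \xrightarrow{\ \sim\ } T \wedge M(p)_G$. I want to flag at the outset that $M(p)_G$-locality alone does \emph{not} imply $R/p$-locality --- for instance $R/p \simeq \ast$ when $R$ is rational --- so the argument must genuinely use the ring structure on $T$ and not merely its $M(p)_G$-locality.

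Given $C$ with $C \wedge R \wedge M(p)_G \simeq \ast$, the second property above rewrites the hypothesis as $C \wedge T \wedge M(p)_G \simeq \ast$; equivalently, multiplication by $p$ acts invertibly on the free $T$-module $C \wedge T$, so the canonical map $C \wedge T \to (C \wedge T) \wedge S_G[p^{-1}]$ is an equivalence of $T$-modules, where $S_G[p^{-1}] := \hocolim\!\big(S_G \xrightarrow{\,p\,} S_G \xrightarrow{\,p\,} \cdots\big)$. Working in $\Mod_{T}(Sp^{G})$, which is tensored and cotensored over $Sp^{G}$, the free--forgetful and tensor--cotensor adjunctions then give
\begin{align*}
F(C, T) &\simeq F_{T}(C \wedge T,\, T) \simeq F_{T}\big((C \wedge T) \wedge S_G[p^{-1}],\, T\big) \\
&\simeq F\big(S_G[p^{-1}],\, F_{T}(C \wedge T, T)\big) \simeq F\big(S_G[p^{-1}],\, F(C, T)\big).
\end{align*}
Now $S_G[p^{-1}]$ is $M(p)_G$-acyclic (multiplication by $p$ is an equivalence on it, so $S_G[p^{-1}] \wedge M(p)_G \simeq \mathrm{cofib}(p\colon S_G[p^{-1}] \to S_G[p^{-1}]) \simeq \ast$), while $F(C, T)$, being a function $G$-spectrum into the $M(p)_G$-local spectrum $T$, is itself $M(p)_G$-local; hence $F\big(S_G[p^{-1}], F(C,T)\big) \simeq \ast$, and comparing with the display yields $F(C, T) \simeq \ast$.

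The only real obstacle is the conceptual point flagged in the first paragraph: one must resist trying to deduce $R/p$-locality directly from $M(p)_G$-locality, and instead route the module structure on $T$ through the (co)tensor adjunctions of $\Mod_T$, where the hypothesis on $C$ becomes precisely the statement that $C \wedge T$ has $p$ inverted. One could equivalently argue in $\Mod_R$, using that $R \to T$ is a map of rings so that $T$ is an $R$-module and that $C \wedge R$ already has $p$ inverted. Everything else is routine manipulation of Bousfield localizations.
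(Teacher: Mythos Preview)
Your argument is correct. Both you and the paper exploit the module structure over the ring to reduce to $M(p)_G$-locality, but the paper's route is shorter: writing $X$ for your $C$, it simply observes that the free--forgetful adjunction gives
\[
[X, R^\wedge_p]^G \simeq [X \wedge R, R^\wedge_p]_{R\text{-}\Mod}^G \subseteq [X \wedge R, R^\wedge_p]^G = 0,
\]
where the inclusion forgets the $R$-module structure and the last vanishing holds because $X \wedge R$ is $M(p)_G$-acyclic (that is exactly the hypothesis $X \wedge R/p \simeq \ast$) while $R^\wedge_p$ is $M(p)_G$-local. Your approach instead passes to $T$-modules, extracts the factor $S_G[p^{-1}]$ via the cotensor adjunction, and then invokes $M(p)_G$-locality of $F(C,T)$; this is more elaborate but has the virtue of making explicit \emph{why} the hypothesis is the right one (it says $p$ is inverted on the free module). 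The alternative you sketch at the end, working in $\Mod_R$, is essentially the paper's argument once one replaces ``$p$ is inverted on $C \wedge R$'' with the equivalent ``$C \wedge R$ is $M(p)_G$-acyclic'' and simply forgets the module structure rather than routing through $S_G[p^{-1}]$.
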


\begin{proof}
The usual proof goes through in the genuine equivariant setting.    Let $X$ be an $R/p$-acyclic $G$-spectrum, so that
    $X \wedge R/p \simeq X \wedge R \wedge M_G(p) \simeq \**$.
    Then $X \wedge R$ is $M_G(p)$-acyclic.
    Since $R^\wedge _p$ is $M_G(p)$-local, we have
    \[
        [X, R^\wedge_p]^G \simeq [X \wedge R, R^{\wedge}_p]_{R\text{-mod}}^G \subseteq [X \wedge R, R^\wedge_p]^G = 0.
    \]
\end{proof}

\begin{proposition}
\label{FiberSequenceProp}
For $G$ an odd $q$-group and $\ell$ a primitive root mod $|G| = q^k$ there is a fiber sequence
\begin{equation} \label{eq:fiberseq}
L_{\KUG/q}S_G \to (\KUG)^{\wedge}_q \lra{\psi^\ell-1} (\KUG)^{\wedge}_q.
\end{equation}
\end{proposition}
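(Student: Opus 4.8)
The plan is to identify $L_{\KUG/q}S_G$ as the fiber of $\psi^\ell - 1$ on $(\KUG)^\wedge_q$ by checking the two defining properties of a Bousfield localization: that the fiber is $\KUG/q$-local, and that the map $S_G \to \mathrm{fib}(\psi^\ell-1)$ is a $\KUG/q$-equivalence. First I would set $F := \mathrm{fib}\bigl((\KUG)^\wedge_q \xrightarrow{\psi^\ell-1} (\KUG)^\wedge_q\bigr)$. By \cref{FiberIsLocal}, $(\KUG)^\wedge_q$ is $\KUG/q$-local; since $\KUG/q$-local $G$-spectra are closed under fibers (they form the right-orthogonal complement of the $\KUG/q$-acyclics, hence a localizing subcategory closed under limits), $F$ is $\KUG/q$-local as well.

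The substantive point is to produce a map $S_G \to F$ and show it is a $\KUG/q$-equivalence. The unit map $S_G \to (\KUG)^\wedge_q$ composed with $\psi^\ell - 1$ is nullhomotopic (the Adams operation $\psi^\ell$ acts as the identity on the image of the unit, since $\psi^\ell$ fixes $1 \in RU(G)$ and, more structurally, $\psi^\ell$ is a ring map fixing the sphere), so we get a lift $S_G \to F$; I would want to pin this lift down using that the relevant obstruction/indeterminacy group vanishes, or simply cite the standard construction of the $K$-local sphere as in \cite{Bousfield}. To check it is a $\KUG/q$-equivalence, smash the fiber sequence $F \to (\KUG)^\wedge_q \xrightarrow{\psi^\ell - 1} (\KUG)^\wedge_q$ with $\KUG/q$ and compare with $S_G \wedge \KUG/q \to (\KUG)^\wedge_q \wedge \KUG/q$. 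The key input is that $\psi^\ell - 1$ becomes an equivalence after a suitable completion/localization away from its "eigenvalue $1$" part, so that $\KUG/q \wedge F \simeq \KUG/q$; concretely, one shows $\psi^\ell - 1$ is injective with cokernel killed appropriately on $\mf\pi_* \KUG/q$, using that $\psi^\ell$ acts on $\beta$ by multiplication by $\ell$ (recalled at the start of \cref{sec:stable}) and $\ell \not\equiv 1 \bmod q$ since $\ell$ is a primitive root mod $q^k$. This reduces to an algebraic statement about the $\psi^\ell$-action on $RU(G)^\wedge_q$ and its Tate/Bott-twisted variants.

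Alternatively — and this may be cleaner equivariantly — I would argue via the geometric fixed point functors together with the underlying statement. The classical (nonequivariant) result of Adams--Baird and Ravenel gives the fiber sequence $L_{KU/q}S \to KU^\wedge_q \xrightarrow{\psi^\ell-1} KU^\wedge_q$, and more generally $\Phi^H(\psi^\ell-1)$ on $\Phi^H(\KUG)^\wedge_q$ can be computed from \cref{CyclicGeoFP} and \cref{PhiNonCycKUzero}: it vanishes for non-cyclic $H$, and for $H = C_{q^j}$ it is an Adams-operation difference on a $KU$-module of the form $KU \otimes \overline{RU}(C_{q^j})[\tfrac1q]$. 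Since a map of $G$-spectra is an equivalence iff it is so on all geometric fixed points, and since $\Phi^H$ commutes with $\KUG/q$-localization (geometric fixed points send localizations to localizations, as noted in the introduction), one checks that the candidate map $S_G \to F$ becomes a $\Phi^H\KUG/q$-equivalence after applying each $\Phi^H$, invoking the nonequivariant result of Bousfield--Ravenel at each fixed-point level.

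The main obstacle I expect is the bookkeeping around the lift $S_G \to F$ and the verification of the $\KUG/q$-equivalence: nonequivariantly this rests on $KU^\wedge_q$ being the $K(1)$-local sphere's "resolution" and on $\psi^\ell - 1$ being surjective on homotopy (a fact that uses the specific arithmetic of $\ell$ being a primitive root), and equivariantly one must additionally control the new geometric fixed-point data, in particular that the extra $RU$-module summands appearing in \cref{CyclicGeoFP} do not contribute to the fiber after smashing with $\KUG/q$. Making the compatibility of $\Phi^H$ with $\psi^\ell$ and with $q$-completion precise — so that the reduction to the known nonequivariant fiber sequence is airtight — is the technical heart of the argument.
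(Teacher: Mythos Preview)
Your second approach (via geometric fixed points) is exactly the route the paper takes, and your overall structure is correct: show the fiber $F$ is $\KUG/q$-local via \cref{FiberIsLocal}, lift the unit through $F$ (the indeterminacy lives in $\mpi_1(\KUG)^\wedge_q = 0$, so the lift is canonical), and then check that $S_G \to F$ is a $\KUG/q$-equivalence on all $\Phi^H$.

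However, the ``main obstacle'' you flag at the end is not an obstacle at all, and missing this is the one real gap in your write-up. You correctly quote \cref{CyclicGeoFP} as giving $\Phi^{C_{q^j}}\KUG \simeq KU \otimes \overline{RU}(C_{q^j})[\tfrac1q]$, but you do not draw the immediate consequence: since $q$ is already a unit in this ring spectrum, its mod-$q$ quotient vanishes, i.e.\ $\Phi^{C_{q^j}}(\KUG/q) \simeq \ast$ for every \emph{nontrivial} cyclic $q$-subgroup. Combined with \cref{PhiNonCycKUzero} for the non-cyclic case, this means $\Phi^H(\KUG/q)\simeq \ast$ for \emph{every} nontrivial subgroup $H\leq G$. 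Consequently the map $\KUG/q \to \KUG/q \wedge F$ is trivially an equivalence after applying $\Phi^H$ for $H\neq e$, and the only case requiring any content is $H=e$, which is precisely the classical nonequivariant fiber sequence of Adams--Baird/Ravenel/Bousfield. There is no ``extra $RU$-module summand'' bookkeeping to do, and no need to analyze $\Phi^H(\psi^\ell)$ for nontrivial $H$ at all.

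Your first (purely algebraic) approach, tracking $\psi^\ell-1$ on $\mpi_*(\KUG/q)$ directly, could in principle be made to work but is strictly harder; the paper does not pursue it, and once you see the vanishing of $\Phi^H(\KUG/q)$ for $H\neq e$ there is no reason to.
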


\begin{proof}
The canonical map of equivariant ring spectra $\eta \colon S_G \to (\KUG)^{\wedge}_q$ factors through $L_{\KUG/q}S_G$, and the induced map $L_{\KUG/q}S_G \to (\KUG)^{\wedge}_q$ is a map of rings. We wish to identify $L_{\KUG/q}S_G$ with the fiber of $\psi^\ell-1$.

     To this end, let $F_G$ denote the fiber of $\psi^\ell - 1$. Since $(\KUG)^{\wedge}_q$ is a $q$-complete equivariant commutative ring spectrum, it is $\KUG/q$-local by \cref{FiberIsLocal}. It follows that the fiber $F_G$ is $\KUG/q$-local. To identify $L_{\KUG/q}S_G$ with $F_G$, we wish to show that the canonical map $S_G \to F_G$ is an equivalence after smashing with $\KUG/q$ (the map exists because $(\psi^\ell-1)\eta=0$ and is canonical because $\mpi_1(\KUG)^{\wedge}_q$ vanishes). That is, we want the map
\[  
    \KUG/q \to \KUG/q \wedge F_G
\]  
to be an equivalence. Since the geometric fixed point functors $\{\Phi^H|H\subseteq G\}$ are jointly conservative and symmetric monoidal, it suffices to check that
\[  
\Phi^H\KUG/q \to \Phi^H \KUG/q \wedge \Phi^H F_G 
\]
is an equivalence of spectra for each $H \subseteq G$. If $H \subseteq G$ is not cyclic, then Proposition \ref{PhiNonCycKUzero} implies that $\Phi^H\KUG/q \simeq 0$. When $H \subseteq G$ is nontrivial and cyclic, Proposition \ref{CyclicGeoFP} implies that $q$ is invertible in $\Phi^H \KUG$, so again $\Phi^H\KUG/q \simeq 0$. Thus we only need to check the case $H=e$, which is the classical statement \cite{Bousfield}*{Section~4} that the $KU/q$-local sphere is the fiber of $\psi^\ell-1 \colon KU^\wedge_q \to KU^\wedge_q$. 
\end{proof}

\begin{remark}
If we complete at $p \neq q$, the strategy above does not work. For a choice of $\ell$ such that $\psi^\ell \colon (KU_G)^{\wedge}_{p} \to (KU_G)^{\wedge}_{p}$ is stable, let $F_G = \fib(\psi^\ell-1)$. By applying the geometric fixed points functor for cyclic subgroups of $G$, one can show that the canonical map
\[
(KU_G)/p \longrightarrow F_G \wedge (KU_G)/p
\]
is not an equivalence, and thus $F_G$ is not the $(KU_G)/p$-local sphere.
\end{remark}

\begin{remark}
Another approach to \cref{FiberSequenceProp} was suggested by Balderrama (see also \cite{Balderrama}). One can show that the fiber sequence is the image of the fiber sequence
\[
L_{KU/q} S \to KU^{\wedge}_q \to KU^{\wedge}_q
\]
under the functor from spectra to $G$-spectra sending $X$ to the Borel equivariant spectrum for the trivial $G$-action on $X$. One reason this works is because $(KU_G)^{\wedge}_{q}$ is Borel-complete if $G$ is a $q$-group. This follows from the fact that there is a canonical isomorphism
\[
(KU^{\wedge}_{q})^0(BG) \cong RU(G) \otimes \Z_q,
\]
for $G$ a $q$-group.
\end{remark}

We now address the algebraic analogue of \cref{FiberSequenceProp}, and give a description of the kernel of $\mpi_0(\psi^\ell-1) \colon \mf{RU} \to \mf{RU}$ in terms of the Burnside Green functor $\mf{A}$.
We will abuse notation and write $\psi^\ell-1$ for both $\mpi_0(\psi^\ell-1)$ and $\pi_0(\psi^\ell-1)$. 

Recall that linearization defines a canonical map $\mf{A} \to \mf{RU}$. This map is induced by the map sending a finite $G$-set to the associated complex permutation representation. Let 
\[
\mf{J} = \ker(\mf{A} \to \mf{RU}).
\]
Using character theory, as in \cite[Proposition 3.8]{Szymik}, it is easy to see that $\mf{J}(G/H) = J(H)$ is the ideal of $A(H)$ generated by virtual $H$-sets $[X]$ with the property that $|X^h|=0$ for $h \in H$. Thus we have a canonical {injective} map of Green functors
\[
\mf{A}/\mf{J} \hookrightarrow \mf{RU}.
\]
Note that $\mf{J}$ is also the kernel of the canonical map $\mf{A} \to \mf{R\Q}$.

\begin{proposition} \label{prop:kernel}
For $G$ an odd $q$-group and $\ell$ a primitive root mod $|G| = q^k$, we have isomorphisms of Green functors
\[
\mf{A}/\mf{J} \cong \mf{R\Q} \cong \ker(\psi^\ell-1 \colon \mf{RU} \to \mf{RU}).
\]
\end{proposition}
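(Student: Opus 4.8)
The plan is to prove the two isomorphisms separately. For the first, $\mf{A}/\mf{J} \cong \mf{R\Q}$, recall that $\mf{J}$ was already identified as the kernel of the linearization map $\mf{A} \to \mf{RU}$, and also (as noted in the excerpt) as the kernel of $\mf{A} \to \mf{R\Q}$. So $\mf{A}/\mf{J} \hookrightarrow \mf{R\Q}$ injectively, and what needs to be shown is that this map is surjective, i.e.\ that for every subgroup $H \leq G$ the linearization map $A(H) \to R\Q(H)$ is surjective. This is where the hypothesis that $G$ is an odd $q$-group enters: for a $q$-group all rational representations are realized by permutation modules. This is a classical fact --- it follows from Artin's induction theorem together with the fact that a $q$-group has a unique conjugacy class of subgroups of each order in a cyclic subgroup, but more directly one can invoke a theorem in the literature (e.g.\ Ritter--Segal, or the statement that for $p$-groups the map $A(H) \to R\Q(H)$ is a split surjection). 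Since the maps are compatible with restriction and transfer, surjectivity at every level gives the isomorphism of Green functors.

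For the second isomorphism, $\mf{R\Q} \cong \ker(\psi^\ell - 1 \colon \mf{RU} \to \mf{RU})$, the key point is that $\psi^\ell$ acts on $RU(H)$ (for $H \leq G$, itself an odd $q$-group) via its action on characters, and on a character $\chi$ it sends $\chi(g) \mapsto \chi(g^\ell)$. An element of $RU(H)$ is fixed by $\psi^\ell$ precisely when its character is constant on the orbits of the map $g \mapsto g^\ell$ on conjugacy classes. Because $\ell$ is a primitive root mod $|G| = q^k$, and hence a primitive root mod $|H|$ for every subgroup $H$, the map $g \mapsto g^\ell$ generates the full group $(\Z/|\langle g\rangle|)^\times$ acting on the generators of $\langle g \rangle$; that is, $\psi^\ell$ generates the action of the Galois group $\Gal(\Q(\zeta_{|H|})/\Q)$ on $RU(H) \otimes \Q(\zeta)$, or more precisely the fixed points of $\psi^\ell$ on $RU(H)$ are exactly the virtual characters taking values fixed by all Galois automorphisms, i.e.\ taking rational values. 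Thus $\ker(\psi^\ell - 1 \colon RU(H) \to RU(H)) = RU(H) \cap \Cl(H,\Q) = R\Q_\chi(H)$. It then remains to observe that for an odd $q$-group $H$ one has $R\Q_\chi(H) = R\Q(H)$: every rational-valued virtual character of a $q$-group is actually the character of a rational representation. Again this is a classical fact about $q$-groups (the Schur indices are trivial, which for $p$-groups with $p$ odd follows from Roquette's theorem), and it is presumably proven or cited elsewhere in the paper; combined with compatibility with restriction and transfer this gives the Green functor isomorphism.

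I would organize the write-up as: (1) reduce to a statement at each $H \leq G$, noting $H$ is again an odd $q$-group and $\ell$ remains a primitive root mod $|H|$; (2) identify $\ker(\psi^\ell - 1)$ on $RU(H)$ with rational-valued virtual characters via the character formula and the primitivity of $\ell$; (3) invoke triviality of Schur indices / the Ritter--Segal type result to conclude both that $R\Q_\chi(H) = R\Q(H)$ and that $A(H) \to R\Q(H)$ is surjective; (4) assemble into Green functor isomorphisms using naturality.

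The main obstacle is step (2): making the Galois-theoretic argument precise --- one needs that the cyclic group generated by the permutation $g \mapsto g^\ell$ on the set of generators of a cyclic $q$-group $\langle g \rangle \cong C_{q^j}$ is all of $(\Z/q^j)^\times$, which is exactly the assertion that $\ell$ is a primitive root mod $q^j$, and this does follow from $\ell$ being a primitive root mod $q^k$ since the reduction $(\Z/q^k)^\times \to (\Z/q^j)^\times$ is surjective for $j \le k$ (using that $q$ is odd so these groups are cyclic). One must then translate "character constant on $\langle g\rangle$-Galois-orbits for all $g$" into "character $\Q$-valued", which is where oddness of $q$ is genuinely used to ensure the Galois action on all of $\Cl(H,\C)$ is governed by these power operations. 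Once that translation is clean, the rest is bookkeeping.
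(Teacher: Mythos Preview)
Your proposal is correct and follows essentially the same approach as the paper. The paper invokes the Ritter--Segal theorem for the first isomorphism (as you do), identifies the $\psi^\ell$-fixed points in $RU(H)$ with $R\Q_\chi(H)$ via the Galois action on characters (the paper routes this through Serre's identification $RU(G)\cong R\Q(\zeta_{q^k})(G)$ and tom Dieck's description of $\psi^\ell$, whereas you use the character formula $\chi(g)\mapsto\chi(g^\ell)$ directly, but the content is identical), and then cites Schilling's theorem for the equality $R\Q_\chi(H)=R\Q(H)$ where you cite Roquette/triviality of Schur indices; these are the same fact for odd $q$-groups.
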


\begin{proof}
First, the Ritter--Segal theorem \cites{Ritter, Seg-perm}, implies that, since $G$ is a $q$-group, we have an isomorphism of Green functors $\mf{A}/\mf{J} \iso \mf{R\Q}$.
We will show that
\[
\mf{R\Q}
\cong \ker(\psi^\ell-1 \colon \mf{RU} \to \mf{RU}). 
\]
It suffices to show that we have an isomorphism of rings
\[ R\Q(G) \cong \ker( \psi^\ell-1\colon RU(G) \rtarr RU(G)).\]
The kernel
\[
\ker(\psi^\ell-1) \colon RU(G) \to RU(G)
\]
consists of the fixed points for the action of the ring endomorphism $\psi^\ell$ on $RU(G)$.

\cite{serre}*{Proposition~33} implies that 
\[
RU(G) \cong 
R\Q(\zeta_{q^k})(G).
\]
By assumption, $\ell$ is a generator of $(\Z/q^k)^\times$. For a $G$-representation $\rho$ in $R\Q(\zeta_{q^k})(G)$, \cite{tomDieck}*{Proposition 3.5.2.(i)} implies that the $\ell$th Adams operation $\psi^\ell$ acts on the character $\chi(\rho)$ through the action of 
\[
\ell \in (\Z/q^k)^{\times} \cong \Gal(\Q(\zeta_{q^k})/\Q)
\]
on the coefficients.
It follows that there is an isomorphism
\[
(R\Q(\zeta_{q^k})(G))^{\psi^\ell} \cong R\Q_{\chi}(G),
\]
where $R\Q_{\chi}(G) = \chi(RU(G)) \cap \Cl(G,\Q) \subset \Cl(G,\C)$. Now Schilling's theorem \cite{Reiner}*{Theorem 41.9} applies to $R\Q_{\chi}(G)$ since $G$ is an odd $q$-group and implies that
$
R\Q(G) \cong R\Q_{\chi}(G).
$
\end{proof}

We are now prepared to prove the following result:

\begin{prop} \label{prop:maincalcp=q}
Let $G$ be an odd $q$-group. Then there is an isomorphism of Green functors
    \[
    \mpi_0 L_{\KUG/q} S_G \cong (\mf{A}/\mf{J})^{\wedge}_q,
    \]
and $\mpi_1 L_{\KUG/q} S_G$  is finite.
\end{prop}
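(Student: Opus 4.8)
The plan is to extract both statements from the fiber sequence in \cref{FiberSequenceProp} by taking $\mpi_0$ and $\mpi_1$ and comparing with the algebraic computation in \cref{prop:kernel}. First I would record the homotopy Mackey functors of the middle term: since $\mpi_* \KUG \cong \mf{RU}[\beta,\beta^{-1}]$ with $\mf{RU}(G/H) = RU(H)$ a finitely generated free abelian group, $q$-completion gives $\mpi_*(\KUG)^\wedge_q \cong \mf{RU}_q[\beta,\beta^{-1}]$, where $\mf{RU}_q = \mf{RU}\otimes\Z_q$ is concentrated in even degrees (no $\lim^1$ issue, as the homotopy groups are finitely generated and the completion is classical). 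In particular $(\KUG)^\wedge_q$ has homotopy concentrated in even degrees, so the long exact sequence of the fiber sequence breaks into short exact sequences
\[
0 \to \mpi_0 L_{\KUG/q}S_G \to \mf{RU}_q \xrightarrow{\psi^\ell-1} \mf{RU}_q \to \mpi_{-1}L_{\KUG/q}S_G \to 0
\]
and isomorphisms $\mpi_1 L_{\KUG/q}S_G \cong \ker\bigl(\psi^\ell-1 \colon \mpi_2(\KUG)^\wedge_q \to \mpi_2(\KUG)^\wedge_q\bigr)$, using that $\psi^\ell(\beta) = \ell\beta$ so the degree-$2$ copy is a twist of the degree-$0$ copy by the unit $\ell$.

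For the $\mpi_0$ statement, I would argue that completion is flat over $\Z$ on the finitely generated modules in sight, so $\ker(\psi^\ell-1)$ on $\mf{RU}_q$ is the $q$-completion of $\ker(\psi^\ell-1)$ on $\mf{RU}$; by \cref{prop:kernel} the latter is $\mf{R\Q}\cong \mf{A}/\mf{J}$, giving $\mpi_0 L_{\KUG/q}S_G \cong (\mf{A}/\mf{J})^\wedge_q$ as Green functors (the ring structure is transported along the canonical ring map $L_{\KUG/q}S_G \to (\KUG)^\wedge_q$ of \cref{FiberSequenceProp}). Concretely: tensoring the short exact sequence $0 \to \mf{R\Q} \to \mf{RU} \xrightarrow{\psi^\ell-1} \mf{RU}$ with $\Z_q$ stays exact since $\Z_q$ is flat, identifying the kernel on $\mf{RU}_q$ with $(\mf{R\Q})^\wedge_q$.

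For finiteness of $\mpi_1$, the key point is that $\mpi_1 L_{\KUG/q}S_G$ sits inside $\mf{RU}_q$ (via the degree-$2$ identification above) as the kernel of $\psi^\ell-1$ twisted by $\ell$. Evaluating at $G/H$, this is a subgroup of the finitely generated $\Z_q$-module $RU(H)\otimes\Z_q$; to see it is finite, I would show it is a torsion $\Z_q$-module, equivalently that $\psi^\ell - \ell \cdot \id$ (the map whose kernel computes $\mpi_2$, after dividing by $\beta$) is rationally injective on $RU(H)$. Rationally, $RU(H)\otimes\Q \cong \Cl(H,\C)^{\Gal}$-type data on which $\psi^\ell$ permutes characters via $g \mapsto g^\ell$ with $\ell$ a primitive root mod $q^k$; the eigenvalues of $\psi^\ell$ on $RU(H)\otimes\C$ are roots of unity (indeed they are the values $\zeta^{j\ell}/\zeta^j$-type ratios arising from the Galois action), so the eigenvalue $\ell$ never occurs as $\ell > 1$. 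Hence $\psi^\ell - \ell$ is injective rationally, its kernel on $RU(H)\otimes\Z_q$ is finite, and $\mpi_1 L_{\KUG/q}S_G$ is a finite Mackey functor. I expect the main obstacle to be the bookkeeping in this last eigenvalue argument — making precise that no eigenvalue of $\psi^\ell$ on $RU(H)\otimes\C$ equals $\ell$ — though this should follow cleanly from the description of $\psi^\ell$ as acting through the Galois group $(\Z/q^k)^\times$ on the $\Q(\zeta_{q^k})$-span of characters, exactly as used in the proof of \cref{prop:kernel}.
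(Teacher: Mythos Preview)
Your treatment of $\mpi_0$ is correct and matches the paper: read off the kernel of $\psi^\ell-1$ on $\mf{RU}^\wedge_q$ from the fiber sequence, invoke flatness of $\Z_q$, and apply \cref{prop:kernel}.

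The $\mpi_1$ argument, however, has the long exact sequence backwards. From the fiber sequence $L_{\KUG/q}S_G \to (\KUG)^\wedge_q \xrightarrow{\psi^\ell-1} (\KUG)^\wedge_q$ with $\mpi_{\mathrm{odd}}(\KUG)^\wedge_q=0$, the degree-$2$ piece reads
\[
0 \to \mpi_2 L_{\KUG/q}S_G \to \mpi_2(\KUG)^\wedge_q \xrightarrow{\psi^\ell-1} \mpi_2(\KUG)^\wedge_q \to \mpi_1 L_{\KUG/q}S_G \to 0,
\]
so $\mpi_1$ is the \emph{cokernel} of $\psi^\ell-1$ on $\mpi_2$, not the kernel; the kernel is $\mpi_2$ of the fiber. (You also slip between the two readings yourself, writing later ``the map whose kernel computes $\mpi_2$''.) There is a second small slip: after dividing out $\beta$, the map on $\mf{RU}_q$ is $\ell\,\psi^\ell - 1$, not $\psi^\ell-\ell$, since $\psi^\ell(r\beta)=\ell\,\psi^\ell(r)\,\beta$.

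Fortunately your eigenvalue idea survives the correction. On the finitely generated free $\Z_q$-module $RU(H)^\wedge_q$, injectivity of $\ell\,\psi^\ell-1$ is equivalent (over $\Q_q$) to surjectivity, hence to finiteness of the cokernel. Your observation that $\psi^\ell$ acts on $\Cl(H,\C)$ with eigenvalues that are roots of unity (since $\psi^\ell$ has finite order) shows $\ell^{-1}$ is never an eigenvalue, giving the required injectivity. This is essentially the paper's argument: it passes to class functions where $\psi^\ell$ becomes a permutation matrix $S$, so the map is $\ell S - \Id$, which is invertible mod $\ell$ and hence has nonzero determinant. So once you swap kernel for cokernel and fix the form of the twisted map, your approach coincides with the paper's.
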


\begin{proof}
With the fiber sequence \eqref{eq:fiberseq} in hand, we can easily calculate $\mpi_0\left( L_{\KUG/q}S_G\right)$ and $\mpi_1\left( L_{\KUG/q}S_G\right)$. Since $\mpi_1 (\KUG)^{\wedge}_q = 0$, we have 
\[
\mpi_0 \left(L_{\KUG/q}S_G \right) \cong \ker(\psi^\ell-1) \cong (\mf{A}/\mf{J})^{\wedge}_q
\] 
by Proposition \ref{prop:kernel}. 

Now 
\[
\mpi_1 \left(L_{\KUG/q}S_G\right) \iso \coker \left( \psi^\ell - 1\colon \mpi_2 (KU_G)^\wedge_q \to \mpi_2 (KU_G)^\wedge_q
\right). 
\]
To see that $\mpi_1L_{\KUG/q}S_G$ is finite, it suffices to show that $\psi^\ell-1$ is injective on $\mpi_2 (\KUG)^\wedge_q \cong \mf{RU}^\wedge_q\{\beta\}$, where $\beta$ is the ordinary Bott class. Since $\psi^\ell-1$ is base changed along the flat extension $\Z \to \Z_q$ from the action of $\psi^\ell-1$ on $\mf{RU}\{\beta\}$, it suffices to show that the action on $\mf{RU}\{\beta\}$ is injective. Since this action is levelwise, we may show that 
\[
\psi^\ell-1 \colon RU(G)\{\beta\} \to RU(G)\{\beta\}
\]
is injective. Since $RU(G)$ is a finitely generated free $\Z$-module, we may base change to $\C$ and work with class functions, giving us 
\[
\psi^\ell-1 \colon \Cl(G)\{\beta\} \to \Cl(G)\{\beta\}.
\]
We wish to show that this map is an isomorphism. We will consider the basis consisting of indicator functions. 
The indicator functions are permuted by the action of $\psi^\ell$. If the associated permutation matrix is $S$, then $\psi^\ell$ acts on $\Cl(G)\{\beta\}$ by $\ell S$. Then we are interested in the determinant of the integer matrix $\ell S - \Id$ in which $\ell \geq 2$. Since this matrix is invertible mod $\ell$, the determinant is nonzero. 
\end{proof}

\section{The splitting of $G$-spectra, away from the order of the group} \label{sec:splitting}

For the duration of this section, we fix a prime $p$ not dividing the order of the finite group $G$. We review the fact that the $p$-local $G$-equivariant stable homotopy category splits as a product of Borel-equivariant homotopy categories. 
This essentially appeared first in \cite{GrMa}*{Appendix~A}, and also more recently in \cites{Barnes,Liu}, and explicitly in \cite{Wimmer};
as we will need an explicit description of this splitting, we prove the result in full.

This splitting arises from a corresponding splitting of the $p$-local Burnside ring of $G$ \cite{KosLocal}. The $p$-local splitting of $A(G)$ arises from the existence of certain idempotents $e_H^G \in A(G)_{(p)}$, one for each conjugacy class of subgroups. The idempotent $e^G_H$ is of the form
\begin{equation}
\label{formulaIdempotent}
    e^G_H = \frac1{|W_G(H)|}G/H + \sum_{(K)} c^H_K G/K,
\end{equation}
where $K$ runs over conjugacy classes of $G$ that are properly subconjugate to $H$ and $c^H_K \in \Z_{(p)}$.

Given the isomorphism $\pi_0(S_G) \iso A(G)$, this allows us to define, for any $p$-local $G$-spectrum $X$, the $G$-spectrum
$e^G_H X$  as the telescope 
\[ e^G_H X = \hocolim( X \xrtarr{e^G_H} X \xrtarr{e^G_H} X \xrtarr{e^G_H} \dots).
\]

When $H=\{ 1 \}$ is the trivial subgroup, this idempotent is smashing with the free $G$-space $EG_+$.
To see this consider the cofiber sequence
$EG_+ \rtarr (G/G)_+ \rtarr \widetilde{EG}$ of based $G$-spaces, which gives rise to the cofiber sequence of $G$-spectra $EG_+ \rtarr S_G \rtarr \widetilde{EG}$.
The map $e_1^G$ is the composition of maps of $G$-spectra
\begin{equation}
\label{eG1composition}
    S_G \xrtarr{\frac{\mathrm{tr}}{|G|}} G_+ \rtarr S_G.
\end{equation}
Since the underlying spectrum of $\widetilde{EG}$ is contractible, it follows that $e^G_1 \widetilde{EG} \simeq \ast$ and $e^G_1 S_G \simeq e^G_1 EG_+$. 
However, on $EG_+$, the composition \cref{eG1composition} is the identity since $EG$ has only free cells, and we conclude that $e^G_1 S_G \simeq EG_+$. We prove a generalization of this equivalence to $H \subseteq G$ in \cref{DescriptionBorelIdempotents}.

\begin{thm}[\cites{Araki,GrMa,Barnes,Liu,Wimmer}] \label{thm:primetoq}
Let $p$ be a prime not dividing the order of the group $G$.
Then the collection of geometric fixed point functors, as $(H)$ runs over conjugacy classes of subgroups, yields a symmetric monoidal equivalence of categories
\[ 
\Ho \Sp^G_{(p)} \xrtarr{(\Phi^H)}
\bigoplus_{(H)} \Ho \Sp^{hW_G(H)}_{(p)}.
\]
\end{thm}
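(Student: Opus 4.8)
The plan is to exhibit the equivalence by building the inverse functor out of the idempotents $e^G_H$ and checking it is inverse to $(\Phi^H)$ on both sides. First I would recall that, since $p \nmid |G|$, the idempotents $\{e^G_H\}$ of \cref{formulaIdempotent} form a complete orthogonal family in $A(G)_{(p)} \cong \pi_0^G(S_{G,(p)})$, so that $S_{G,(p)}$ splits as a finite wedge $\bigvee_{(H)} e^G_H S_{G,(p)}$ of idempotent summands. Smashing an arbitrary $X \in \Sp^G_{(p)}$ with this decomposition gives a natural splitting $X \simeq \bigvee_{(H)} e^G_H X$, and since the $e^G_H$ are central idempotents this is a decomposition of $\Ho\Sp^G_{(p)}$ as a product of the full subcategories $e^G_H \Ho\Sp^G_{(p)}$, each of which is symmetric monoidal with unit $e^G_H S_{G,(p)}$. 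It then remains to identify the $H$-th factor with $\Ho\Sp^{hW_G(H)}_{(p)}$ compatibly with geometric fixed points.

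For the identification of the factors, I would argue that $\Phi^H$ restricted to $e^G_H \Ho\Sp^G_{(p)}$ lands in (and is detected by) Borel $W_G(H)$-spectra. The natural candidate for the inverse on the $H$-th factor is the composite that takes a Borel $W_G(H)$-spectrum $Y$, regards it via inflation along $N_G(H) \to W_G(H)$ and then induces up from $N_G(H)$ to $G$, and finally applies the idempotent $e^G_H$; I would phrase the explicit formula as $Y \mapsto e^G_H\bigl(G_+ \wedge_{N_G(H)} \inf Y\bigr)$, with the companion statement (the promised \cref{DescriptionBorelIdempotents}) that $e^G_H S_{G,(p)} \simeq G_+ \wedge_{N_G(H)} \phi_H^* (EW_G(H)_+)$, generalizing the computation $e^G_1 S_G \simeq EG_+$ already carried out in the text. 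With that model in hand, the composite $\Phi^H \circ (\text{this inverse})$ is computed by the usual geometric-fixed-point formulas: inflation composed with $\widetilde{E\mathcal F[H]} \wedge (-)$ and passage to $N_G(H)/H = W_G(H)$-fixed points, which on the free/Borel part returns $Y$ up to equivalence. Orthogonality of the idempotents kills the cross terms $\Phi^H(e^G_K X)$ for $(K)\neq(H)$, since $\Phi^H$ is symmetric monoidal and sends $e^G_K$ to $0$ in $\pi_0$ whenever $K$ is not conjugate to $H$ — this uses the double-coset/fixed-point description of $\Phi^H$ applied to the generators $G/K$ appearing in \cref{formulaIdempotent}.

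Finally I would assemble the two checks into a genuine equivalence of symmetric monoidal categories: $(\Phi^H)$ composed with the wedge of inverses is naturally isomorphic to the identity because the splitting $X \simeq \bigvee e^G_H X$ reassembles $X$ from its geometric fixed points on this factor; conversely, on each $\Ho\Sp^{hW_G(H)}_{(p)}$ the round trip is the identity by the fixed-point computation above. Symmetric monoidality of $(\Phi^H)$ is immediate since each $\Phi^H$ is symmetric monoidal and the product symmetric monoidal structure on the target is the obvious one; that the combined functor is lax-to-strong monoidal follows from the fact that $S_{G,(p)}$ maps to $(e^G_H S_{G,(p)})_{(H)}$, which is the unit of the product, via the splitting isomorphism.

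The main obstacle I expect is \Cref{DescriptionBorelIdempotents} itself — pinning down $e^G_H S_{G,(p)}$ as an explicit induced Borel spectrum and verifying that the geometric fixed point functor implements the projection to the $H$-factor cleanly, rather than merely up to an unidentified self-equivalence of $\Ho\Sp^{hW_G(H)}_{(p)}$. The subtlety is bookkeeping about normalizers and Weyl groups: one must be careful that inducing from $N_G(H)$ and then applying $\Phi^H$ really produces $W_G(H)$-Borel objects with the correct action, and that the idempotent $e^G_H$ precisely selects the summand on which this holds. Everything else — the orthogonality computations, the vanishing of cross terms, the naturality — is formal once that identification is secured.
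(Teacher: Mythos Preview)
Your outline shares the paper's architecture: split $\Ho\Sp^G_{(p)}$ via the complete orthogonal idempotents $e^G_H$, then identify each factor with $\Ho\Sp^{hW_G(H)}_{(p)}$ through $\Phi^H$, with symmetric monoidality inherited from that of each $\Phi^H$. The execution differs. The paper does not build and verify a two-sided inverse; it proves $(\Phi^H)$ is fully faithful by a chain of hom-set isomorphisms that lean on two results of Araki (\cite{Araki}*{Theorems~3.5 and 4.7}), one passing from $G$ to $N_G H$ through the idempotents and the other identifying $e^{NH}_H$-pieces with $e_1^{WH}\Phi^H$-pieces, and then proves essential surjectivity separately by exhibiting the preimage $\uparrow_{NH}^G(\widetilde{E\cF[H]}\smsh EWH_+\smsh Y)$ and computing its geometric fixed points via the double-coset formula. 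Your preimage $e^G_H(G_+\smsh_{N_GH}\inf Y)$ is a different (though related) model; incidentally, your guess about \cref{DescriptionBorelIdempotents} is off --- the actual statement is $(e^G_H X)^H\simeq\Phi^H X$ in $\Ho\Sp^{hW_G(H)}_{(p)}$, not an identification of $e^G_H S_G$ as an induced spectrum.

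The genuine gap in your plan is the direction you call ``reassembling $X$.'' Having checked $\Phi^H\circ(\text{inverse})\simeq\id$ gives only a one-sided inverse; to conclude an equivalence you still need either full faithfulness of $\Phi^H$ on the $e^G_H$-summand or a direct verification that $e^G_H(G_+\smsh_{N_GH}\inf\,\Phi^H X)\simeq e^G_H X$ naturally in $X$. The sentence ``the splitting $X\simeq\bigvee e^G_H X$ reassembles $X$'' does not supply this: it only says the summands add back up, not that your candidate inverse produces the correct summand from $\Phi^H X$. This is precisely the step the paper offloads to Araki's theorems, and you would need either to invoke them or to give an independent argument (for instance a cell-induction showing the counit of an adjunction is an equivalence on the $e^G_H$-summand). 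Everything else in your sketch --- the vanishing of cross terms $\Phi^H(e^G_K X)$ for $(K)\neq(H)$, the double-coset computation, the symmetric monoidality --- is in line with the paper and correct.
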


\begin{pf}
The fact that the collection of geometric fixed point functors is fully faithful is stated as \cite[Theorem~A.16]{GrMa}, for the case of rationalization. However, the argument is based on \cite{Araki}, which provides the needed results at the level of $p$-localization, as we now recall.

We have the chain of isomorphisms
\[
\begin{split}
[X,Y]^G_p &\iso 
\bigoplus_{(H)} [ e_H^G X, e_H^G Y]^G_p 
\iso \bigoplus_{(H)} [ e^{NH}_H X, e^{NH}_H Y]^{NH}_p \\
 &\iso \bigoplus_{(H)} [ e_{1}^{WH} \Phi^H X, e_{1}^{WH} \Phi^H Y]^{WH}_p \\
&\iso \bigoplus_{(H)} [ EWH_+ \smsh \Phi^H X, EWH_+ \smsh \Phi^H Y]^{WH}_p \\
&\iso \bigoplus_{(H)} [\Phi^H X, \Phi^H Y]^{hWH}_p. \\
\end{split}
\]
Here, the second isomorphism is given by \cite[Theorem~3.5]{Araki} and  the third by \cite[Theorem~4.7]{Araki}

To see that the collection of geometric fixed point functors is essentially surjective, we provide, for each $Y \in \Sp^{hWH}_p$, a $G$-spectrum whose $K$-geometric fixed points vanish unless $K=H$, up to conjugacy, and whose $H$-geometric fixed points is $Y$. 

For a subgroup $H\leq G$, denote by $\cF[H]$ the family of subgroups of $NH$ which do not contain $H$. Since $H$ is normal in $NH$, this is indeed a family, meaning that it is closed under subgroups and conjugation. 
We then claim that the $G$-spectrum 
\[
X = \uparrow_{NH}^G \left( \widetilde{E\cF[H]} \smsh EWH_+ \smsh Y \right)
\]
has the desired fixed point properties. 

First note that the $NH$-space $\widetilde{E\cF[H]} \smsh EWH_+$ satisfies
\[
\Big( \widetilde{E\cF[H]} \smsh EWH_+ \Big)^K \simeq 
\begin{cases}
S^0 & K=H \\
\ast & \text{else.}
\end{cases}
\]
We will write $E\langle H \rangle = \widetilde{E\cF[H]} \smsh EWH_+$.

Now for any $NH$-spectrum $Z$, the double coset formula gives
\[ 
\begin{split}
\downarrow^G_{NK} \uparrow_{NH}^G Z &\simeq\bigvee \uparrow^{NK}_{NK\cap NH^g} c_g \downarrow^{NH}_{NK^{g^{-1}} \cap NH} Z \\
&\simeq\bigvee \uparrow^{NK}_{NK\cap NH^g}  \downarrow^{NH^g}_{NK \cap NH^g} c_{g^{-1}} Z.
\end{split}
\]
Then
\[\begin{split}
    \Phi^K \Big( \downarrow^G_{NK} \uparrow_{NH}^G E\langle H \rangle \smsh Y \Big) 
    &\simeq \bigvee \Phi^K \Big(\uparrow^{NK}_{NK\cap NH^g}  \downarrow^{NH^g}_{NK \cap NH^g} c_{g^{-1}} E\langle H \rangle \smsh Y \Big) \\
    &\simeq \bigvee \uparrow^{WK}_{\frac{NK\cap NH^g}K}  \Phi^K \Big( \downarrow^{NH^g}_{NK \cap NH^g} c_{g^{-1}} E\langle H \rangle \smsh Y \Big) \\
    & \simeq \begin{cases}
      Y & K = H^g \\ 
      \ast & \text{else.}
    \end{cases}
\end{split}
\]
This verifies that the collection $(\Phi^H)$ of geometric fixed point functors is essentially surjective.
Finally, the equivalence is symmetric monoidal simply because each geometric fixed point functor is symmetric monoidal.
\end{pf}

In the proof of \cref{thm:primetoq}, we employed the $p$-local idempotents $e^G_H$.  We will use the following description of the interaction of the idempotents with fixed points.

\begin{prop}
\label{DescriptionBorelIdempotents}
For $H \leq G$, $X\in \Sp^G_{(p)}$, and $p$ not dividing the order of $G$, we have 
\[
(e^G_H X)^H \simeq \Phi^H(X)
\]
in the $p$-local Borel-equivariant category $\Ho \Sp^{h W_G(H)}_{(p)}$.
\end{prop}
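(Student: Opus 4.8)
The plan is to reduce the statement to the case $H\unlhd G$ and then establish that case by isotropy separation. Since the target $\Ho\Sp^{hW_GH}_{(p)}$ is the localization of $\Sp^{W_GH}$ at the underlying equivalences, throughout it will be enough to produce the comparison equivalence on underlying spectra, the $W_GH$-actions coming for free from working with maps of (categorical) fixed-point spectra.

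First I would reduce to $H$ normal. By the definition of the fixed-point functor for a non-normal subgroup (\cref{sec:ReviewFix}), $(e^G_H X)^H\simeq\bigl(\downarrow^G_{N_GH}(e^G_H X)\bigr)^H$, where now $(-)^H$ is the categorical $H$-fixed points for the normal inclusion $H\unlhd N_GH$, and likewise $\Phi^H X$ means $\Phi^H(\downarrow^G_{N_GH}X)$. The restriction of a $p$-local Burnside-ring idempotent is computed on marks, giving $\downarrow^G_{N_GH}e^G_H=\sum_{(K)}e^{N_GH}_K$, the sum over $N_GH$-conjugacy classes of subgroups $K\leq N_GH$ that are $G$-conjugate to $H$. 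Splitting $\downarrow^G_{N_GH}(e^G_H X)$ along this sum of orthogonal idempotents and applying $(-)^H$, each summand with $K\neq H$ becomes trivial in the Borel category: its underlying spectrum is $\pi^H_\ast$ of $e^{N_GH}_K\cdot\downarrow^G_{N_GH}X$, i.e.\ the $\downarrow^{N_GH}_H e^{N_GH}_K$-localization of $\pi^H_\ast(\downarrow^G_{N_GH}X)$, and $\downarrow^{N_GH}_H e^{N_GH}_K=0$ because no subgroup of $H$ other than $H$ itself — the unique $N_GH$-conjugate of $H$ — can be $N_GH$-conjugate to $K$. So only the $e^{N_GH}_H$ summand survives, and after replacing $G$ by $N_GH$ and $X$ by $\downarrow^G_{N_GH}X$ I may assume $H\unlhd G$; write $W:=G/H=W_GH$.

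For the normal case, I would smash the defining cofiber sequence $E\cF[H]_+\to S_G\to\widetilde{E\cF[H]}$ with $e^G_H X$. The term $E\cF[H]_+\smsh e^G_H X$ is genuinely contractible: $E\cF[H]$ is built from $G$-cells $G/K_+$ with $H\not\subseteq K$, and $G/K_+\smsh e^G_H X\simeq\uparrow^G_K\bigl((\downarrow^G_K e^G_H)\smsh\downarrow^G_K X\bigr)$ with $\downarrow^G_K e^G_H=0$ in $A(K)_{(p)}$, since a subgroup of $K$ that were $G$-conjugate to the normal subgroup $H$ would equal $H$, contradicting $H\not\subseteq K$; running up the skeleta gives $E\cF[H]_+\smsh e^G_H X\simeq\ast$. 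Hence $e^G_H X\xrightarrow{\ \sim\ }\widetilde{E\cF[H]}\smsh e^G_H X$, and applying $(-)^H$ identifies $(e^G_H X)^H\simeq\Phi^H(e^G_H X)$ in $\Sp^W$. It remains to see $\Phi^H(e^G_H X)\simeq\Phi^H X$ in $\Ho\Sp^{hW}_{(p)}$: since $\Phi^H$ is symmetric monoidal and commutes with the homotopy colimit defining $e^G_H X=\hocolim(X\xrightarrow{e^G_H}X\to\cdots)$, the canonical map $\Phi^H X\to\Phi^H(e^G_H X)$ is the inclusion of the initial stage of $\hocolim(\Phi^H X\xrightarrow{m}\Phi^H X\to\cdots)$, with $m$ multiplication by $\Phi^H_\ast(e^G_H)\in\pi_0\Phi^H S_G$. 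On underlying spectra $\Phi^H S_G$ restricts to $\Phi^H S_H$, and $\Phi^H_\ast(e^G_H)$ restricts to the image of $\downarrow^G_H e^G_H=e^H_H$ in $\pi_0\Phi^H S_H\cong\Z$, under which $A(H)\to\Z$ is the mark at $H$ (immediate from $E\cP_{H+}\to S_H\to\widetilde{E\cP_H}$ and the tom Dieck splitting of $(E\cP_{H+})^H$); since the mark of $e^H_H$ at $H$ is $1$, $m$ is the identity on underlying spectra, so $\Phi^H X\to\Phi^H(e^G_H X)$ is an underlying equivalence of $W$-spectra, hence an equivalence in $\Ho\Sp^{hW}_{(p)}$.

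The main difficulty will be this last step: tracking $e^G_H$ through restriction and geometric fixed points and invoking $\pi_0\Phi^H S_H\cong\Z$ together with $|(e^H_H)^H|=1$ to conclude that $\Phi^H$ of multiplication by $e^G_H$ is the identity underlyingly. The only other point needing care — easy to overlook — is that in the reduction step the off-diagonal idempotent summands are not genuinely contractible, only Borel-contractible; this is harmless because the statement lives in the Borel category anyway. The remaining bookkeeping with Burnside-ring idempotents and the skeletal filtration of $E\cF[H]$ is routine.
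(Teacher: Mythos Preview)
Your proof is correct and follows the same high-level outline as the paper (reduce to $H\unlhd G$, then use isotropy separation along $\cF[H]$), but the execution differs in two places. First, the paper encodes the Borel comparison at the $G$-spectrum level by smashing with $E(G/H)_+$ and proving
\[
E(G/H)_+ \smsh e^G_H X \simeq E(G/H)_+ \smsh \widetilde{E\cF[H]} \smsh X,
\]
using the identification $E\cF[H]\times E(G/H)\simeq E\cP_H$ so that only cells induced from \emph{proper} subgroups of $H$ need to be killed; you instead pass to $H$-fixed points immediately and kill all of $E\cF[H]_+\smsh e^G_H X$ in one stroke via $\downarrow^G_K e^G_H=0$ for every $K$ not containing $H$. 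Second, to see that $e^G_H$ acts as the identity after passing to $\Phi^H$, the paper invokes the explicit formula \cref{formulaIdempotent} and the fact that the transfer--projection composite $S_G\to G/H_+\to S_G$ is the identity on $E(G/H)_+$, whereas you compute $\Phi^H(e^G_H)$ underlyingly via the marks homomorphism $A(H)_{(p)}\to\pi_0\Phi^H S_H\cong\Z_{(p)}$. Your route avoids ever writing down the coefficients of $e^G_H$ and is arguably cleaner; the paper's route keeps everything at the level of $G$-spectra and universal spaces. Your reduction to the normal case is also more explicit than the paper's, which simply asserts that one may restrict to $N_GH$; your observation that the off-diagonal summands $e^{N_GH}_K$ are only Borel-trivially, not genuinely, contractible is correct and a nice point.
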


\begin{pf}
Since fixed points with respect to $H$ are computed by first restricting the $G$-action to the action of the normalizer $N_G(H)$, we may without loss of generality assume that $H$ is normal in $G$ and that $W_G(H) = G/H$. 

Recall that $\Phi^H(X) = (\widetilde E \mathcal{F}[H] \smsh X)^H$.
We will show that 
\[
E (G/H)_+ \smsh e^G_H X \simeq 
E (G/H)_+ \smsh \widetilde E \mathcal{F}[H] \smsh X
\]
in $\Sp^{G}_{(p)}$. 
The result then follows by passage to $H$-fixed points, since $H$ acts trivially on $E(G/H)$.
Note that, as a $G$-space, we can write $E (G/H) = E\mathcal{F}_H$, where
 $\mathcal{F}_H$ is the family of subgroups of $H$. Then 
 \[
 E \mathcal{F}[H] \times E (G/H) \simeq E \Big( \mathcal{F}[H] \cap  \mathcal{F}_H \Big) \simeq E \mathcal{P}_H, 
 \]
 where $\mathcal{P}_H$ is the family of proper subgroups of $H$.

Consider the cofiber sequence
\[
\Big( E \mathcal{F}[H] \times E (G/H) \Big)_+ \smsh X \rtarr E (G/H)_+ \smsh X \rtarr \widetilde E \mathcal{F}[H] \smsh E(G/H)_+ \smsh X.
\]
Again, the left term is $(E\mathcal{P}_H)_+\smsh X$, which is annihilated by the idempotent $e^G_H$, since all cells of $E\mathcal{P}_H$ are induced from proper subgroups of $H$. It follows that we have equivalences
\[ 
E(G/H)_+ \smsh e^G_H (X) =
e^G_H \left( E (G/H)_+ \smsh X \right) \simeq e^G_H \left( \widetilde E \mathcal{F}[H] \smsh E(G/H)_+ \smsh X \right).
\]
Since the restriction of $\widetilde E \mathcal{F}[H]$ to proper subgroups of $H$ is contractible, it follows from \cref{formulaIdempotent} that the idempotent $e^G_H$ is given on $\widetilde E \mathcal{F}[H]$ by smashing $\widetilde E \mathcal{F}[H]$ with the composition
\begin{equation}
\label{eGHcomposition}
    S_G \xrtarr{\frac{\mathrm{tr}}{[G\colon H]}} G/H_+ \rtarr S_G.
\end{equation}
On the other hand, on $E (G/H)_+$, the composition \cref{eGHcomposition} is the identity since it only has cells of type $G/H$. We conclude that 
\[
e^G_H \left( \widetilde E \mathcal{F}[H] \smsh E(G/H)_+ \smsh X \right) \simeq 
\widetilde E \mathcal{F}[H] \smsh E(G/H)_+ \smsh X. 
\]
\end{pf}

Recall that the Burnside ring $A(K)$ acts on $\mf M(G/K)$ for all $K \subseteq G$, so $A(G)$ acts on $\mf M(G/K)$ by restriction.
For $\mf M \in \Mack(G)_{(p)}$, we define
$e_H^G \mf M$ by 
$(e_H^G \mf M)(G/K) = e_H^G (\mf M(G/K))$.

The algebraic analogue of \cref{thm:primetoq}, which follows from the argument of \cite{GrMa}*{Theorem A.9 and Proposition A.12},
and \cite{BarnesKed}*{Corollary 7.3} for the monoidal structure, is as follows;

\begin{prop}
\label{MackeySplitting}
Let $p$ be a prime not dividing the order of the group $G$. Then the map
\[
\Mack(G)_{(p)} \xrightarrow{(V_H)} \bigoplus_{(H)} \Mod_{\Z_{(p)}[WH]}
\]
is a {symmetric monoidal} equivalence of categories,
where
$V_H(\underline{M}) := e^G_H \ul{M}(G/H)$.
\end{prop}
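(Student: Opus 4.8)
The plan is to prove this by the standard ``idempotent splitting plus Morita theory'' route, deferring the two genuinely technical inputs to the cited references. The only thing that must be set up from scratch is the decomposition of the category; after that, identifying each factor with a module category is formal, and the precise ring $\Z_{(p)}[W_G(H)]$ together with the monoidal compatibility can be extracted from \cite{GrMa} and \cite{BarnesKed}. Concretely: since $p\nmid|G|$, the mark homomorphism $A(G)_{(p)}\to\prod_{(H)}\Z_{(p)}$ is an isomorphism, so the elements $e^G_H$ of \eqref{formulaIdempotent} form a complete orthogonal system of idempotents. For every $K\le G$ the restriction $A(G)_{(p)}\to A(K)_{(p)}$ is a ring map, hence carries this system to a complete orthogonal system in $A(K)_{(p)}$; since the structure maps of a Mackey functor are $A(-)$-linear, the $e^G_H$ assemble into a complete orthogonal system of idempotent natural endotransformations of $\id_{\Mack(G)_{(p)}}$, whence $\Mack(G)_{(p)}\simeq\bigoplus_{(H)} e^G_H\Mack(G)_{(p)}$. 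Because the $e^G_H$ lie in $\End_{\Mack(G)}(\mf A)=A(G)$, which acts centrally on the box product, this decomposition is symmetric monoidal, the $(H)$-summand having unit $e^G_H\mf A$.

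Next I would identify the $(H)$-summand. Write $\mathbb{A}_G$ for the Burnside (span) category and set $P_H:=e^G_H\,\mathbb{A}_G(-,G/H)_{(p)}$, the image of the idempotent on the representable. As a retract of a representable Mackey functor, $P_H$ is a compact projective object of the summand, and the Yoneda lemma together with the idempotent splitting gives a natural isomorphism $\Hom_{e^G_H\Mack(G)_{(p)}}(P_H,\ul M)\cong e^G_H\ul M(G/H)=V_H(\ul M)$. So $V_H$ is corepresented by $P_H$, and by the standard description of module categories (a cocomplete abelian category with a compact projective generator $P$ is equivalent, via $\Hom(P,-)$, to right modules over $\End(P)$) it suffices to show that $P_H$ generates $e^G_H\Mack(G)_{(p)}$ and that $\End(P_H)\cong\Z_{(p)}[W_G(H)]$. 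For generation one uses that the representables generate $\Mack(G)_{(p)}$, so the $e^G_H\,\mathbb{A}_G(-,G/K)_{(p)}$ generate the summand, and then that $p$-locally the idempotent $e^G_H$ concentrates each of these at the orbit $G/H$; this is essentially \cite{GrMa}*{Theorem~A.9}. For the endomorphism ring one uses the orbit decomposition $\mathbb{A}_G(G/H,G/H)\cong\bigoplus_{g\in H\backslash G/H}A(H\cap {}^gH)$: the summands with $g\notin N_G(H)$ have $H\cap {}^gH$ a proper subgroup of $H$ and are annihilated by $e^G_H$, while for $g\in N_G(H)$ the summand $A(H)$ is cut down by $e^G_H$ to its $\Z_{(p)}$-component, and the composition law on the surviving pieces --- indexed by $H\backslash N_G(H)=W_G(H)$ --- is exactly the group multiplication. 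The resulting $W_G(H)$-action on $V_H(\ul M)$ is the one induced by the Weyl action on $\ul M(G/H)$, which preserves the $e^G_H$-summand because $e^G_H$ is conjugation-invariant.

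Finally, for the monoidal statement it remains to see that $V_H$ sends the box product of Mackey functors to $\otimes_{\Z_{(p)}}$ equipped with the diagonal $W_G(H)$-action. Since both functors are cocontinuous in each variable and $P_H$ generates, this may be checked on the generator: $V_H(\mf A)=\Z_{(p)}$ with trivial action is the monoidal unit, and the computation of $\End(P_H)$ above identifies the value of $V_H$ on the box product of $P_H$ with itself with $V_H(P_H)\otimes_{\Z_{(p)}}V_H(P_H)$ as $W_G(H)$-modules. I would cite \cite{BarnesKed}*{Corollary~7.3} for this comparison and \cite{GrMa}*{Theorem~A.9, Proposition~A.12} for the underlying additive equivalence and its functoriality in $G$.

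I expect the main obstacle to be exactly the pair of assertions in the second paragraph --- that $P_H$ generates the $(H)$-summand, and that $e^G_H\,\mathbb{A}_G(G/H,G/H)_{(p)}\,e^G_H\cong\Z_{(p)}[W_G(H)]$. Both encode the statement that, after inverting $|G|$, the idempotent $e^G_H$ retains precisely the ``$(H)$-isotypic'' data and that this data is controlled linearly by the Weyl group; this is the algebraic shadow of the topological splitting of \cref{thm:primetoq}, and is where the hypothesis $p\nmid|G|$ is genuinely used.
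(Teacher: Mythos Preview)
Your proposal is correct and matches the paper's approach: the paper does not give a self-contained proof but simply attributes the additive equivalence to \cite{GrMa}*{Theorem~A.9 and Proposition~A.12} and the monoidal compatibility to \cite{BarnesKed}*{Corollary~7.3}, exactly the references you invoke for the two substantive inputs (generation by $P_H$ and the identification $\End(P_H)\cong\Z_{(p)}[W_G(H)]$). Your sketch of the idempotent splitting and the Morita-theoretic identification of each factor is a faithful unpacking of what those references do, so there is nothing to correct.
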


{Here the monoidal structure on $\Mod_{\Z_{(p)}[WH]}$ is given by the underlying tensor product of $\Z_{(p)}$-modules, equipped with the diagonal action of $W_G(H)$.}

\begin{remark}
    \label{rem:tambara}
    The symmetric monoidal equivalence above yields an analogous splitting of the category of Green functors localized at the prime $p$
    \[
    \mathrm{Green}(G)_{(p)} \longto \bigoplus_{(H)} \CAlg(\Mod_{\Z_{(p)}[WH]}).
    \]
    However, following the discussion after \cite{BarnesKed}*{Corollary 7.3},
    we note that this idempotent splitting does not preserve the structure of a Tambara functor.
\end{remark}

Moreover, we have the following comparison.

\begin{proposition}
\label{SplittingComparison}
The diagram
\begin{equation}
    \label{SplittingComparisonEq}
    \begin{tikzcd}
        \Ho \Sp^G_{(p)} \arrow[r,"(\Phi^H)", "\sim" swap ] \ar[d,"\ul{\pi}_n" swap] &
        \displaystyle \bigoplus_{(H)} \Ho \Sp^{hW_G(H)}_{(p)} \ar[d,"\pi_n"] \\
        \Mack(G)_{(p)} \arrow[r, "(V_H)", "\sim" swap] & 
        \displaystyle \bigoplus_{(H)} \Mod_{\Z_{(p)}[W_GH]}
    \end{tikzcd}
\end{equation}  
commutes.
\end{proposition}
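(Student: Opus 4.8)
The plan is to show that the two composite functors $\Ho\Sp^G_{(p)} \to \bigoplus_{(H)} \Mod_{\Z_{(p)}[W_GH]}$ obtained by traversing the square are naturally isomorphic. Evaluating on $X \in \Sp^G_{(p)}$ and extracting the $(H)$-component, going down-then-across produces $V_H(\ul{\pi}_n X) = e^G_H\bigl(\ul{\pi}_n X(G/H)\bigr) = e^G_H\,\pi^H_n(X)$, while going across-then-down produces $\pi_n(\Phi^H X)$. So it suffices to construct, for each conjugacy class $(H)$, an isomorphism $\pi_n(\Phi^H X) \cong e^G_H\,\pi^H_n(X)$ of $\Z_{(p)}[W_GH]$-modules, natural in $X$.

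The homotopical input is \cref{DescriptionBorelIdempotents}, which supplies an equivalence $\Phi^H(X) \simeq (e^G_H X)^H$ in $\Ho\Sp^{hW_GH}_{(p)}$. Applying $\pi_n$ and using $\pi_n\bigl((e^G_H X)^H\bigr) = \pi^H_n(e^G_H X)$ reduces the problem to identifying $\pi^H_n(e^G_H X)$ with $e^G_H\,\pi^H_n(X)$. Recall that $e^G_H X$ is by definition the telescope $\hocolim\bigl(X \xrtarr{e^G_H} X \xrtarr{e^G_H} X \xrtarr{e^G_H} \dots\bigr)$ of the idempotent $e^G_H \in A(G)_{(p)} = \pi^G_0(S_G)_{(p)}$. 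Since $\Sigma^\infty \Sigma^n (G/H)_+$ is a compact object of $\Ho\Sp^G$, the functor $\pi^H_n(-)$ commutes with this sequential homotopy colimit, so $\pi^H_n(e^G_H X) \cong \colim\bigl(\pi^H_n X \xrtarr{e^G_H} \pi^H_n X \xrtarr{e^G_H} \dots\bigr)$, where $e^G_H$ now acts through the restriction $\Res^G_H \colon A(G)_{(p)} \to A(H)_{(p)}$ on the $A(H)_{(p)}$-module $\pi^H_n X = \ul{\pi}_n X(G/H)$. As the colimit of an idempotent endomorphism of an abelian group is its image, this colimit equals $e^G_H\,\pi^H_n(X)$, which is precisely $V_H(\ul{\pi}_n X)$ by the definition recalled before \cref{MackeySplitting}.

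It remains to verify that these identifications are natural in $X$ and compatible with the $W_GH$-actions. Naturality is automatic, since each ingredient --- the equivalence of \cref{DescriptionBorelIdempotents}, the exchange of $\pi_n$ with $(-)^H$ and with the telescope, and the passage to the image of $e^G_H$ --- is natural. For the $W_GH$-equivariance, I would recall that \cref{DescriptionBorelIdempotents} is proved by reducing to $H \unlhd G$ with $W_GH = G/H$ and establishing $E(G/H)_+ \smsh e^G_H X \simeq E(G/H)_+ \smsh \widetilde{E\cF}[H] \smsh X$; in that situation the maps $e^G_H$, the telescope structure maps, and the functor $(-)^H$ are all visibly $G/H$-equivariant, so the induced isomorphism on homotopy groups respects the residual $W_GH$-module structures on both sides. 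I expect this last bookkeeping --- tracking the Weyl-group action through the proof of \cref{DescriptionBorelIdempotents} and through the compactness argument --- to be the only genuinely delicate point, as the substantive content is entirely carried by \cref{DescriptionBorelIdempotents}.
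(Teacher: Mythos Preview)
Your proposal is correct and follows essentially the same route as the paper: both use \cref{DescriptionBorelIdempotents} to identify $\Phi^H X$ with $(e^G_H X)^H$, and then identify $\pi^H_n(e^G_H X)$ with $e^G_H\,\pi^H_n(X)$. The paper packages this by factoring the square through the intermediate categories $\bigoplus_{(H)} e^G_H \Sp^G_{(p)}$ and $\bigoplus_{(H)} e^G_H \Mack(G)_{(p)}$, declaring the left square to commute ``by construction'' and the right by definition of $\ul{\pi}_n$; you instead unwind these steps explicitly via the telescope description and the compactness of $\Sigma^\infty\Sigma^n(G/H)_+$, which is exactly what underlies the paper's ``by construction''.
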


\begin{proof}
Let $e^G_H \Sp^G_{(p)} \subseteq \Sp^G_{(p)}$ be the essential image of the functor $e^G_H$.  According to \cref{DescriptionBorelIdempotents}, 
    the horizontal maps factor as in the diagram
\[    \begin{tikzcd}
        \Ho \Sp^G_{(p)} \arrow[r ] \ar[d,"\ul{\pi}_n" swap] &
        \displaystyle \bigoplus_{(H)} \Ho e^G_H \Sp^G_{(p)} \ar[r,"(-)^H"] \ar[d,"\underline{\pi}_n"] &
        \displaystyle \bigoplus_{(H)} \Ho \Sp^{hW_G(H)}_{(p)} \ar[d,"\pi_n"] \\
        \Mack(G)_{(p)} \arrow[r] & 
        \displaystyle \bigoplus_{(H)} e^G_H \Mack(G)_{(p)} \arrow[r, "\mathrm{ev}_{G/H}"]  &
        \displaystyle \bigoplus_{(H)} \Mod_{\Z_{(p)}[W_GH]},
    \end{tikzcd} \] 
    where $\mathrm{ev}_{G/H}(\mf M) = \mf M(G/H)$.
    The first square commutes by construction, and
    the second square commutes by the definition of $\underline{\pi}_n$. 
\end{proof}

We will also need the following alternative description of the functor $V_H$, as suggested immediately preceding \cite{SchGlobal}*{3.4.22}.

\begin{prop}
\label{AlternativeVH}
Let $p$ be a prime not dividing the order of the group $G$. For $H~\leq~G$ and $\ul{M} \in \Mack(G)$, let 
$t_H \ul{M}\leq \ul{M}(G/H)$ be the subgroup 
generated by transfers from proper subgroups of $H$.
Assume further that $\ul{M} \in \Mack(G)_{(p)}$. 
Then the projection homomorphism
$\ul{M}(G/H) \onto e^G_H \ul{M}(G/H)=V_H(\ul{M})$
induces an isomorphism
$ \ul{M}(G/H)/t_H \ul{M} \iso V_H(\ul{M})$.
\end{prop}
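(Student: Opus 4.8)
The map in question is multiplication by the idempotent $e^G_H$ on $\ul M(G/H)$ followed by the identification of its image with $V_H(\ul M)$; since $e^G_H$ is idempotent it is the projection onto the summand $e^G_H\ul M(G/H)$, with kernel the complementary summand $(1-e^G_H)\ul M(G/H)$. So the plan is to prove the equality of $\Z_{(p)}$-submodules $(1-e^G_H)\ul M(G/H) = t_H\ul M$ inside $\ul M(G/H)$, after which the proposition is immediate.

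For the inclusion $(1-e^G_H)\ul M(G/H)\subseteq t_H\ul M$, I would expand $e^G_H$ via the formula \eqref{formulaIdempotent} and examine the action of each basis element $[G/K]\in A(G)$ on $\ul M(G/H)$. Recall that this action is that of $\Res^G_H[G/K]\in A(H)$, which by the double coset formula equals $\sum_{HgK}[H/(H\cap{}^gK)]$, and the operator attached to a summand $[H/L]$ is $\mathrm{tr}^H_L\Res^H_L$. When $K$ is properly subconjugate to $H$ one has $|{}^gK|=|K|<|H|$, so every $H\cap{}^gK$ is a proper subgroup of $H$ and the corresponding operator has image in $t_H\ul M$. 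For the leading term, the double cosets $HgH$ split into the $|W_GH|$ of the form $gH$ with $g\in N_GH$, each contributing $[H/H]$, together with the cosets of $g\notin N_GH$, each contributing $[H/L]$ with $L=H\cap{}^gH\subsetneq H$; hence $\tfrac1{|W_GH|}[G/H]$ acts as $\id$ plus an operator whose image lies in $t_H\ul M$. Altogether $e^G_H$ acts on $\ul M(G/H)$ as $\id+\theta$ with $\theta\bigl(\ul M(G/H)\bigr)\subseteq t_H\ul M$, so $(1-e^G_H)\ul M(G/H)=-\theta\bigl(\ul M(G/H)\bigr)\subseteq t_H\ul M$.

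For the reverse inclusion $t_H\ul M\subseteq\ker(e^G_H)=(1-e^G_H)\ul M(G/H)$, it suffices to show that $e^G_H$ annihilates $\mathrm{tr}^H_L(x)$ for every proper $L\subsetneq H$ and every $x\in\ul M(G/L)$. Frobenius reciprocity for $\ul M$ as a module over the Burnside Green functor gives
\[
e^G_H\cdot\mathrm{tr}^H_L(x)=\mathrm{tr}^H_L\bigl(\Res^H_L\Res^G_H(e^G_H)\cdot x\bigr)=\mathrm{tr}^H_L\bigl(\Res^G_L(e^G_H)\cdot x\bigr),
\]
so it is enough to see that $\Res^G_L(e^G_H)=0$ in $A(L)_{(p)}$. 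Since $p\nmid|G|$, the mark (ghost) homomorphism $A(L)_{(p)}\hookrightarrow\prod_{(J)}\Z_{(p)}$ is injective, the marks of $e^G_H$ are those of the indicator of the $G$-conjugacy class of $H$, and marks commute with restriction; as $|J|\le|L|<|H|$, no subgroup $J\le L$ is $G$-conjugate to $H$, so every mark of $\Res^G_L(e^G_H)$ vanishes and hence $\Res^G_L(e^G_H)=0$. Therefore $e^G_H\cdot\mathrm{tr}^H_L(x)=0$, and since $t_H\ul M$ is generated by such elements we get $t_H\ul M\subseteq\ker(e^G_H)$.

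Combining the two inclusions identifies $\ker\bigl(\ul M(G/H)\onto V_H(\ul M)\bigr)$ with $t_H\ul M$, which is the claim. The only mildly delicate inputs are the standard description of the $p$-local Burnside idempotents through the ghost map (so that their restrictions to subgroups are detected by fixed-point cardinalities, here via the argument already used for \cref{DescriptionBorelIdempotents}) and keeping the Frobenius-reciprocity bookkeeping straight; once $p\nmid|G|$ is in force neither of these is a real obstacle, and I expect the argument to go through essentially as sketched.
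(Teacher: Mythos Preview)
Your argument is correct. The inclusion $t_H\ul M\subseteq\ker(e^G_H)$ is handled exactly as in the paper: $\Res^G_L(e^G_H)=0$ for proper $L\subsetneq H$ (detected on marks), then Frobenius reciprocity. The other inclusion is where your route diverges. You work directly with the single idempotent $e^G_H$: expanding via \eqref{formulaIdempotent} and the double coset formula, you show that $e^G_H$ acts on $\ul M(G/H)$ as $\id+\theta$ with $\theta$ landing in $t_H\ul M$, so $1-e^G_H=-\theta$ already has image in $t_H\ul M$. The paper instead invokes the full orthogonal decomposition $1=\sum_{(K)}e^G_K$ in $A(G)_{(p)}$, writes the kernel as $\bigoplus_{(K)\neq(H)}e^G_K\ul M(G/H)$, and then argues term by term: the summands with $H$ subconjugate to $K$ vanish (since $\Res^G_H e^G_K=0$), while those with $K$ properly subconjugate to $H$ lie in $t_H\ul M$ because $e^G_K$ is transferred up from $K$ and Frobenius reciprocity applies. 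Your approach is slightly more self-contained, needing only the shape of $e^G_H$ itself rather than the whole system of idempotents; the paper's approach makes the structure of the kernel more transparent as a direct sum indexed by the other conjugacy classes. Either way the same ingredients (the formula \eqref{formulaIdempotent}, double cosets, Frobenius reciprocity, injectivity of the $p$-local ghost map) do the work.
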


\begin{pf}
The claim amounts to the statement that the kernel of the surjection $\ul{M}(G/H) \onto V_H(\ul{M})$ is precisely $t_H \ul{M}$. 
We first observe that if $K$ is (conjugate to) a proper subgroup of $H$, then the restriction $\downarrow^G_K(e^G_H) \in A(K)_{(p)}$ is 0. This implies that $e^G_H \ul{M}(G/K) = 0$ and the commuting square
\[
  \begin{tikzcd}
    \ul{M}(G/H) \arrow[r] & e^G_H \ul{M}(G/H) \\
    \ul{M}(G/K) \ar[u] \ar[r] & e^G_H \ul{M}(G/K) \ar[u]
  \end{tikzcd}
\]
shows that the image of the transfer $\ul{M}(G/K) \to \ul{M}(G/H)$ lies in the kernel. Allowing $K$ to vary over proper subgroups, we conclude that $t_H \ul{M}$ is contained in the kernel. 

On the other hand, using that $\mf M \in \Mack(G)_{(p)}$ splits as $\mf M(G/H) \cong \bigoplus_{(K)} e_K^G \mf M(G/H)$,
the kernel of the projection is a direct sum with terms $e^G_K \ul{M}(G/H)$, where $K$ is not conjugate to $H$. It remains to show that each of these lies in $t_H \ul{M}$. If $H$ is contained in $K$, up to conjugacy, then the term $e^G_K \ul{M}(G/H)$ vanishes. On the other hand, if $K$ is (conjugate to) a proper subgroup of $H$, then since $e^G_K\in A(G)_{(p)}$ is induced up from $K$, the Frobenius reciprocity axiom shows that $e^G_K \ul{M}(G/H)$ lies in the image of the transfer from the proper subgroup $K$.
\end{pf}

This description has the following consequence.

\begin{prop}
\label{prop:VHCYC}
Let $p$ be a prime not dividing the order of $G$, 
fix a $p$-local abelian group $T$ 
and let $\underline{M} \in \Mack(G)_{(p)}$. Then 
\[
    V_H(\ul{M}) \cong
    \begin{cases}
        T \  \text{with trivial $W_GH$-action} \qquad & \mbox{$H$ cyclic}\\
        0 & \mbox{else},
    \end{cases}
\]
if and only if $\ul{M} \cong \ul{A}/\ul{J} \otimes T$.
\end{prop}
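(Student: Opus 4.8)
The plan is to reduce everything to a single computation of $V_H(\mf A/\mf J)$ via the description of $V_H$ from \cref{AlternativeVH}, tensor up to $T$, and then read off both directions of the biconditional from the fact (\cref{MackeySplitting}) that $(V_H)$ is an equivalence of categories.

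First I would record the needed algebra at level $G/H$. Since $p\nmid|G|$ we also have $p\nmid|H|$, hence $p\nmid|W_H(K)|$ for every $K\leq H$, so the table of marks identifies $A(H)_{(p)}$ with $\prod_{(K\leq H)}\Z_{(p)}$, the primitive idempotents $e^H_K$ becoming the standard basis vectors. Under this identification I claim two things. (i) Because $\mf J=\ker(\mf A\to\mf{R\Q})$ and the composite $A(H)\to\Cl(H,\Q)$ sends a virtual set to the collection of its cyclic-subgroup marks $C\mapsto|X^C|$, the submodule $J(H)_{(p)}$ is exactly the span of those $e^H_K$ with $K$ non-cyclic. (ii) By the idempotent formula \eqref{formulaIdempotent} (applied inside $A(H)$), each $e^H_K$ with $K<H$ lies in the $\Z_{(p)}$-span of the transfers $\{[H/L]:L\leq K\}$, hence in $t_H\mf A$; conversely the top mark $|{-}^H|$ annihilates every transfer from a proper subgroup while $|(H/H)^H|=1$, so $t_H\mf A$ is precisely the kernel of the top mark, i.e.\ the span of the $e^H_K$ with $K<H$. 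Combining (i) and (ii), the quotient $(\mf A/\mf J)(G/H)_{(p)}/t_H(\mf A/\mf J)$ is $\Z_{(p)}$, generated by the class of $[H/H]$, when $H$ is cyclic, and $0$ otherwise; and the conjugation action of $W_G(H)$ is trivial since it fixes $[H/H]$. By \cref{AlternativeVH} this computes $V_H(\mf A/\mf J)$.

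Next I would pass to $\mf A/\mf J\otimes T$. Writing $V_H(\mf N)=\mf N(G/H)/t_H\mf N$ as the cokernel of the sum of transfers $\bigoplus_{(K):\,K<H}\mf N(G/K)\to\mf N(G/H)$, and noting that at level $G/K$ one has $(\mf A/\mf J\otimes T)(G/K)=(A(K)/J(K))_{(p)}\otimes_{\Z_{(p)}}T$ with transfers $\mathrm{tr}\otimes\id_T$, right exactness of $-\otimes_{\Z_{(p)}}T$ gives $V_H(\mf A/\mf J\otimes T)\cong V_H(\mf A/\mf J)\otimes_{\Z_{(p)}}T$. By the previous paragraph this is $T$ with trivial $W_G(H)$-action when $H$ is cyclic and $0$ otherwise, which is the forward implication.

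For the converse, suppose $\mf M\in\Mack(G)_{(p)}$ has the stated values of $V_H$. Then for every conjugacy class $(H)$ there is an isomorphism of $\Z_{(p)}[W_G(H)]$-modules $V_H(\mf M)\cong V_H(\mf A/\mf J\otimes T)$ by the computation above, so $(V_H(\mf M))_{(H)}$ and $(V_H(\mf A/\mf J\otimes T))_{(H)}$ are isomorphic objects of $\bigoplus_{(H)}\Mod_{\Z_{(p)}[W_GH]}$; applying the inverse of the equivalence $(V_H)$ of \cref{MackeySplitting} yields $\mf M\cong\mf A/\mf J\otimes T$. The main obstacle is claim (ii) of the second paragraph: pinning down the transfer submodule $t_H\mf A$ $p$-locally as the kernel of the top mark (equivalently as the span of the $e^H_K$ with $K<H$) and keeping track of how it meets $J(H)$. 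Once this bookkeeping with the table of marks and the idempotents \eqref{formulaIdempotent} is in place, the remaining steps are formal.
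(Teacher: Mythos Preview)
Your argument is correct and follows essentially the same route as the paper: identify $A(H)_{(p)}$ via the marks isomorphism, use \cref{AlternativeVH} to read off $V_H(\mf A/\mf J\otimes T)$, and then invoke the equivalence of \cref{MackeySplitting} for the converse. The only organizational difference is that you compute $V_H(\mf A/\mf J)$ first and then tensor with $T$ via right exactness, whereas the paper describes $(\mf A/\mf J\otimes T)(G/K)\cong\prod_{(C)\leq K\text{ cyclic}}T$ directly together with its restrictions and transfers; your more explicit handling of claim~(ii) supplies the justification the paper leaves implicit when it asserts that the transfers are ``the natural inclusions.''
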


\begin{proof}
Since the $p$-local marks homomorphism
\[
    A(G)_{(p)} = A(G) \otimes \mathbb Z_{(p)} \longto \prod_{(H) \leq G} \mathbb Z_{(p)}
\]
is an isomorphism \cite{tomDieck}*{Chapter 5}
and $T \cong \mathbb Z_{(p)} \otimes T$, 
we have that 
\[
    \Big(\left(\underline{A}/\underline{J} \right) \otimes T \Big) (G/K) 
    \cong 
    \prod_{(H)\leq K \text{ cyclic}} T,
\]
with restrictions and transfers the natural projections and inclusions, respectively.
\cref{AlternativeVH} then implies that
\[
    V_H\Big(\underline{A}/\underline{J} \otimes T \Big) \cong
    \begin{cases}
        T \qquad & \mbox{$H$ cyclic}\\
        0 & \mbox{else}.
    \end{cases}
\]  
Since $(V_H)$ is fully faithful by \cref{MackeySplitting}, the result follows.
\end{proof}

Combining \cref{SplittingComparison,prop:VHCYC} yields the following.
\begin{corollary}
\label{prop:ConstantPhiH}
Let $X \in \Sp^G_{(p)}$, and $T$ a fixed $p$-local abelian group. If
\[
    \pi_n \Phi^H X \cong 
    \begin{cases}
        T \  \text{with trivial $W_GH$-action} \qquad & \mbox{$H$ cyclic}\\
        0 & \mbox{else},
    \end{cases}
\]
then $\mf{\pi}_n X \cong \mf{A}/\mf{J} \otimes T$.

Further, if $X$ is a homotopy commutative equivariant ring spectrum and satisfies the condition above and $T$ is a $p$-local commutative ring, then
$\mf \pi_n X \cong \mf A / \mf J \otimes T$ as Green functors.
\end{corollary}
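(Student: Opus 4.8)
The plan is to stitch together \cref{SplittingComparison,prop:VHCYC}; the corollary is essentially a formal consequence once those are in hand. Since $X \in \Sp^G_{(p)}$, the Mackey functor $\mpi_n X$ lies in $\Mack(G)_{(p)}$, so the commuting square of \cref{SplittingComparison} applies and yields, for each conjugacy class $(H)$, an isomorphism of $\Z_{(p)}[W_GH]$-modules $V_H(\mpi_n X) \cong \pi_n \Phi^H X$. By hypothesis the right-hand side is $T$ with trivial $W_GH$-action when $H$ is cyclic and $0$ otherwise, so $\ul M := \mpi_n X$ satisfies exactly the hypothesis appearing in \cref{prop:VHCYC}; the forward implication of that proposition then gives the isomorphism of Mackey functors $\mpi_n X \cong \mf{A}/\mf{J} \otimes T$.

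For the Green functor refinement I would upgrade each step multiplicatively, taking $n = 0$ so that all the homotopy groups in sight carry ring structures. When $X$ is a homotopy commutative ring $G$-spectrum, $\mpi_0 X$ is a Green functor, and since $\Phi^H$ is symmetric monoidal, $\Phi^H X$ is again a homotopy commutative ring spectrum, so $\pi_0 \Phi^H X$ is a commutative ring --- equal to $T$ as a ring when $H$ is cyclic and to $0$ otherwise, by hypothesis. The point is that the comparison square of \cref{SplittingComparison} is one of (lax) symmetric monoidal functors: $(\Phi^H)$ and $(V_H)$ are symmetric monoidal (the latter by \cref{MackeySplitting}, so that it restricts to an equivalence $\mathrm{Green}(G)_{(p)} \simeq \bigoplus_{(H)} \CAlg(\Mod_{\Z_{(p)}[W_GH]})$ as in \cref{rem:tambara}), while $\mpi_0$ and $\pi_0$ are lax symmetric monoidal. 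Hence $V_H(\mpi_0 X) \cong \pi_0 \Phi^H X$ is an isomorphism of rings, and $V_H(\mpi_0 X)$ is $T$ as a ring (resp. $0$) according as $H$ is cyclic (resp. not). On the other side, the computation in the proof of \cref{prop:VHCYC} realizes $V_H(\mf{A}/\mf{J} \otimes T)$ as the quotient of the product ring $\prod_{(K) \le H \text{ cyclic}} T$ by the ideal of classes supported on proper cyclic subgroups, i.e.\ as the projection onto the top factor, which is a ring map; so $V_H(\mf{A}/\mf{J} \otimes T) \cong T$ as a ring as well. Since $(V_H)$ is an equivalence on Green functors and $\mpi_0 X$ and $\mf{A}/\mf{J} \otimes T$ have isomorphic images under it, they are isomorphic as Green functors.

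The argument has no serious obstacle, but two points deserve care. First, one must use the full strength of the hypothesis, namely that $\pi_n \Phi^H X$ is $T$ with \emph{trivial} $W_GH$-action: a hypothesis controlling only the underlying abelian group would not match the target of \cref{prop:VHCYC}, which posits the trivial action. Second, for the Green functor statement one must check that the square of \cref{SplittingComparison} really does commute as a square of lax symmetric monoidal functors --- equivalently, that the identifications $(e^G_H X)^H \simeq \Phi^H X$ of \cref{DescriptionBorelIdempotents} used to factor it are multiplicative --- so that the module-level isomorphism $V_H(\mpi_0 X)\cong \pi_0\Phi^H X$ is promoted to a ring isomorphism. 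Both are routine given that every functor in play is symmetric monoidal or lax symmetric monoidal.
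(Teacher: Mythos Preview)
Your proposal is correct and follows essentially the same approach as the paper: combine \cref{SplittingComparison} to get $V_H(\mpi_n X)\cong \pi_n\Phi^H X$ and then invoke \cref{prop:VHCYC} to conclude $\mpi_n X\cong \mf{A}/\mf{J}\otimes T$, with the Green functor refinement coming from the symmetric monoidality of the equivalences in \cref{SplittingComparisonEq}. Your write-up is slightly more detailed on the multiplicative step, but the strategy is identical.
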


Here, we are using that the category of Green functors is tensored over commutative rings \cite{Lewis}*{Example 2.2(g)}.

\begin{proof}
The ``if" direction of \cref{prop:VHCYC} implies that
\[
V_H(\mf{A}/\mf{J} \otimes T) \cong \pi_n \Phi^H X,
\]
and since $V_H \mf{\pi}_n X \cong \pi_n \Phi^H X$ by \cref{SplittingComparison},
the ``only if" direction of \cref{prop:VHCYC} yields the result.

The further result follows since the equivalences in \cref{SplittingComparisonEq} are symmetric monoidal
and the $p$-local marks homomorphism is an isomorphism of rings.
\end{proof}

\begin{remark}
    \cref{prop:VHCYC,prop:ConstantPhiH} hold if we
    replace the family of cyclic subgroups of $G$ with any other family $\mathcal F$ of subgroups of $G$,
    and replace $\mf{J}$ with the Mackey ideal $\mf J_{\mathcal F}$ where $\mf{J}_{\mathcal F}(G/K)$ is generated by virtual $K$-sets such that $|X^H| = 0$ for all $H \in \mathcal F$.
\end{remark}

\section{The case $p \neq q$}
\label{sec:pneqq}

We will only need the following formal lemma in the context of \cref{thm:primetoq}; however, we state it in the highest generality the argument allows.

\begin{lemma}
    \label{lem:LocalF}
    Let $F \colon \mathcal C \to \mathcal D$ be a symmetric monoidal functor between presentable stable symmetric monoidal $\infty$-categories such that
    $\Ho(F)$ has a left adjoint $G$ that sends $F(E)$-acyclics to $E$-acyclics.
    Then, for $E, X \in \mathcal C$,
    we have $L_{F(E)}F(X) \simeq F(L_E X)$.
\end{lemma}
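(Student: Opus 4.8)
The plan is to show directly that the map $F(X)\to F(L_E X)$ obtained by applying $F$ to the localization map $\lambda\colon X\to L_E X$ exhibits $F(L_E X)$ as the $F(E)$-localization of $F(X)$. Since $\mathcal D$ is presentable and stable, $F(E)$-localization exists there, and by its universal property it suffices to verify two conditions: that $F(L_E X)$ is $F(E)$-local, and that $F(\lambda)$ is an $F(E)$-equivalence. Uniqueness of Bousfield localizations then forces $F(L_E X)\simeq L_{F(E)}F(X)$.

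First I would check $F(E)$-locality. Let $W\in\mathcal D$ be $F(E)$-acyclic. Using the homotopy-category adjunction $G\dashv\Ho(F)$, there is a natural isomorphism
\[
\Hom_{\Ho(\mathcal D)}(W, F(L_E X)) \cong \Hom_{\Ho(\mathcal C)}(G(W), L_E X).
\]
By hypothesis $G(W)$ is $E$-acyclic, and $L_E X$ is $E$-local, so the right-hand side vanishes. Hence $F(L_E X)$ is $F(E)$-local. Note that only the homotopy-category adjunction enters here, which is exactly the level at which ``local'' and ``acyclic'' are defined, so assuming the adjoint $G$ only on $\Ho$ is no obstruction.

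Next I would check that $F(\lambda)$ is an $F(E)$-equivalence, i.e.\ that $F(E)\smsh F(\lambda)$ is an equivalence in $\mathcal D$. As a morphism of presentable symmetric monoidal $\infty$-categories, $F$ is strong symmetric monoidal, so there is a natural equivalence $F(E)\smsh F(-)\simeq F(E\smsh -)$. The localization map $\lambda$ is an $E$-equivalence, so $E\smsh\lambda$ is an equivalence in $\mathcal C$; applying $F$ (which preserves equivalences) and transporting along the monoidal comparison shows $F(E)\smsh F(\lambda)$ is an equivalence. Combining this with the previous paragraph completes the argument.

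The proof is essentially formal, so there is no serious obstacle; the two points deserving a little care are (a) that $G$ is assumed only on homotopy categories — harmless, as explained — and (b) that a morphism of presentable symmetric monoidal $\infty$-categories is strong, not merely lax, monoidal, which is what makes $F(E\smsh X)\simeq F(E)\smsh F(X)$ available. One could instead argue the second condition using exactness of $F$ together with the $F(E)$-acyclicity of $F(\cof(\lambda))\simeq \cof(F(\lambda))$, but invoking the monoidal comparison directly is cleaner and avoids needing $F(0)\simeq 0$ separately.
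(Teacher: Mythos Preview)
Your proof is correct and follows essentially the same approach as the paper: first use the adjunction $G \dashv \Ho(F)$ together with the hypothesis on $G$ to show $F(L_E X)$ is $F(E)$-local, then use strong monoidality of $F$ to deduce that $F(E)\smsh F(\lambda) \simeq F(E\smsh \lambda)$ is an equivalence. The paper phrases the second step via a commuting square rather than invoking the monoidal comparison in words, but the content is identical.
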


\begin{proof}

    Since $\mathcal C$ and $\mathcal D$ are presentable stable symmetric monoidal $\infty$-categories, there is a localization functor associated to any object.

    Next, if $Z \in \mathcal D$ is $F(E)$-acyclic,
    we have
    \[
    \Ho \mathcal D(Z, F(L_E X)) \cong \Ho \mathcal C(G(Z), L_E X) \cong \**.
    \]
    Thus $F(L_E X)$ is $F(E)$-local.
    
    It remains to check that the canonical map $F(X) \to F(L_E X)$ induces an equivalence after smashing with $F(E)$.
    We have a natural commuting diagram
    \[
    \begin{tikzcd}
    F(E) \wedge F(X) \arrow[r] \arrow[d, "\simeq"']
    &
    F(E) \wedge F(L_E X) \arrow[d, "\simeq"]
    \\
    F(E \wedge X) \arrow[r, "\simeq"'] 
    &
    F(E \wedge L_E X)
    \end{tikzcd}
    \]
    and so the top arrow is an equivalence.
\end{proof}

\begin{example}
\label{EX:EQUIV}
\cref{lem:LocalF} holds when $F = (\Phi^H)$ is the functor from \cref{thm:primetoq},
and more generally whenever a symmetric monoidal $F$ descends to an equivalence $F \colon \Ho(\mathcal C) \to \Ho(\mathcal D)$.
Let $G$ denote the inverse, 
and suppose $Z \in \mathcal D$ is $F(E)$-acyclic.
As $Z \simeq F(G(Z))$,
\[
    F(G(Z) \wedge E) \simeq F(G(Z)) \wedge F(E) \simeq Z \wedge F(E) \simeq \**.
\]  
Since $\Ho(F)$ is fully faithful, we conclude $G(Z) \wedge E \simeq \**$, i.e. $G(Z)$ is $E$-acyclic, as desired.
\end{example}

\begin{example}
\label{EX:PROJ}
\cref{lem:LocalF} holds for any projection map $F \colon \prod \mathcal C_i \to \mathcal C_j$ between symmetric monoidal $\infty$-categories.
Indeed, the right (and left) adjoint to $F$ is given by 
\[
    G(X_j)_i = \begin{cases}
        X_j \qquad & i=j\\
        \** & \mbox{otherwise.}
    \end{cases} 
\]
Then, for any $E = (E_i) \in \prod \mathcal C_i$, it is clear that if $Z_j$ is $F((E_i))=E_j$-acyclic then $G(Z_j)$ is $E$-acyclic.
\end{example}

\begin{example}
\label{EX:U}
\cref{lem:LocalF} holds for the forgetful map $u\colon \Sp^{hG} \to \Sp$ from Borel $G$-spectra to underlying spectra.
The left adjoint sends $X$ to $EG_+ \wedge \mathrm{inf}_{G/G}^G X$, 
and $u(E) \wedge Z \simeq \**$ in $\Sp$ iff $E \wedge(EG_+ \wedge \mathrm{inf}_{G/G}^G Z) \simeq \**$ in $\Sp^{hG}$. 
\end{example}

The $p \neq q$ analogue of \cref{prop:maincalcp=q}
now follows from the calculation of $\pi_n L_{KU/p}S$ from \cite[Corollary 4.5]{Bousfield} and the following stronger result:
\begin{proposition}
    \label{prop:pneqq}
    Let $G$ be an odd $q$-group, and $p\neq q$. Then 
    we have an isomorphism of graded Green functors
    \begin{equation}
        \label{eq:pneqq}
        \mf{\pi}_* L_{KU_G/p}S_G \cong \mf{A}/\mf{J} \otimes \pi_* L_{KU/p}S
    \end{equation}
    and moreover an equivalence of (homotopy) commutative equivariant ring spectra
    \begin{equation}
        \label{eq:ECYCL}
    L_{KU_G/p} S_G \simeq ECyc_+ \wedge \mathrm{inf}_{G/G}^G L_{KU/p}S,
    \end{equation}
    where $ECyc$ is the universal space for the family of cyclic subgroups of $G$.
\end{proposition}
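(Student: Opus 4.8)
The plan is to combine the splitting machinery of \cref{sec:splitting} with \cref{lem:LocalF} applied to the geometric fixed point equivalence of \cref{thm:primetoq}. First I would compute the geometric fixed points $\Phi^H(L_{KU_G/p}S_G)$ for each conjugacy class of subgroups $H\leq G$. By \cref{lem:LocalF} and \cref{EX:EQUIV} (using that $\Phi^H$ is symmetric monoidal and the collection of all $\Phi^H$ is an equivalence after $p$-localization), we have $\Phi^H(L_{KU_G/p}S_G)\simeq L_{\Phi^H(KU_G/p)}\Phi^H(S_G) \simeq L_{\Phi^H(KU_G)/p}(S_{W_GH})$ in $\Sp^{hW_GH}_{(p)}$, where I use $\Phi^H(S_G)\simeq S_{W_GH}$. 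Now \cref{PhiNonCycKUzero} gives $\Phi^H KU_G\simeq\ast$ when $H$ is non-cyclic, so $\Phi^H(L_{KU_G/p}S_G)\simeq\ast$ in that case; and when $H$ is nontrivial cyclic, \cref{CyclicGeoFP} shows $p$ is invertible in $\Phi^H KU_G$ (as it is a $KU[\tfrac1q]$-module and $p\neq q$), hence $\Phi^H(KU_G)/p\simeq\ast$, so the localization is trivial; and when $H=e$ it is $L_{KU/p}S$. One subtlety: for nontrivial cyclic $H$ the localization at a trivial spectrum is a terminal object, so $\Phi^H(L_{KU_G/p}S_G)\simeq\ast$ there too.

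Next I would assemble these using the essential-surjectivity half of \cref{thm:primetoq}: writing $\Theta^G_H(Y)=\uparrow_{NH}^G(\widetilde{E\cF[H]}\smsh EWH_+\smsh\phi_H^*Y)$ for the inverse functors constructed there, the fact that the only nonvanishing geometric fixed points of $L_{KU_G/p}S_G$ occur at $H=e$ with value $L_{KU/p}S$ forces, via the equivalence of categories, an equivalence
\[
L_{KU_G/p}S_G \simeq \Theta^G_e(L_{KU/p}S).
\]
For $H=e$ we have $NH=G$, $WH=G$, $\cF[e]$ is the family containing only the trivial subgroup so $\widetilde{E\cF[e]}\simeq\ast$... that is wrong---rather $\cF[e]$ is the family of subgroups not containing $e$, which is empty, so $E\cF[e]\simeq\ast$ and $\widetilde{E\cF[e]}\simeq S_G$. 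Thus $\Theta^G_e(Y)\simeq EG_+\smsh\mathrm{inf}_{G/G}^G Y$, and since $EG\simeq ECyc$ restricted appropriately---more precisely, since $L_{KU_G/p}S_G$ is concentrated (in the sense of geometric fixed points) on the family of cyclic subgroups, I should instead directly identify the $G$-spectrum $ECyc_+\smsh\mathrm{inf}_{G/G}^G L_{KU/p}S$ as having the same geometric fixed points: $\Phi^H(ECyc_+\smsh\mathrm{inf} X)\simeq (ECyc^H)_+\smsh X$ which is $X$ for $H$ cyclic and $\ast$ otherwise, but this disagrees with our computation at nontrivial cyclic $H$. The resolution is that the natural map $L_{KU_G/p}S_G\to ECyc_+\smsh\mathrm{inf}^G_{G/G}L_{KU/p}S$ (built from the unit $S_G\to\mathrm{inf}\, L_{KU/p}S$, using that $L_{KU/p}S$ is $KU/p$-local and $KU_G/p$ is built from its underlying spectrum up to the cyclic family, cf.\ the remarks after \cref{FiberSequenceProp}) is checked to be an equivalence on geometric fixed points at all $H$: for non-cyclic $H$ both sides are $\ast$, and for cyclic $H$ one uses that $KU_G/p$ smashed with $\widetilde{E\cF}$ for $\cF$ the family of non-conjugates of $H$ vanishes, so the localization does not see the $H$-cells---making the comparison an equivalence by conservativity of $\{\Phi^H\}$.

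For the graded homotopy statement \eqref{eq:pneqq}, I would apply \cref{prop:ConstantPhiH} with $T=\pi_n L_{KU/p}S$: the geometric fixed point computation above shows $\pi_n\Phi^H(L_{KU_G/p}S_G)$ is $\pi_n L_{KU/p}S$ with trivial $W_GH$-action when $H$ is cyclic and $0$ otherwise, so \cref{prop:ConstantPhiH} gives $\mf\pi_n L_{KU_G/p}S_G\cong\mf A/\mf J\otimes\pi_n L_{KU/p}S$; and since $L_{KU_G/p}S_G$ is a homotopy commutative ring spectrum and $\pi_*L_{KU/p}S$ is a graded commutative ring, the ``further'' clause of \cref{prop:ConstantPhiH} upgrades this to an isomorphism of (graded) Green functors. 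The main obstacle I anticipate is the bookkeeping in the equivalence \eqref{eq:ECYCL}: one must carefully match the abstract inverse functor $\Theta^G_e$ from the proof of \cref{thm:primetoq} with the explicit model $ECyc_+\smsh\mathrm{inf}$, and verify that the natural comparison map (rather than just an abstract equivalence) is the one realizing the identification, which requires the Borel-completeness input for $(KU_G)^\wedge_p$ together with a geometric-fixed-point conservativity argument; the ring structure is then automatic since all functors in sight are symmetric monoidal.
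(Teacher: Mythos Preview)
There is a genuine error in your geometric fixed point computation at nontrivial cyclic subgroups. You write that for nontrivial cyclic $H$, ``\cref{CyclicGeoFP} shows $p$ is invertible in $\Phi^H KU_G$ (as it is a $KU[\tfrac1q]$-module and $p\neq q$)''. This is backwards: \cref{CyclicGeoFP} gives $\Phi^H KU_G \simeq KU \otimes \overline{RU}(H)[\tfrac1q]$, so it is $q$ that is invertible, not $p$. Since $p\neq q$, the mod-$p$ reduction $\Phi^H(KU_G)/p$ is a \emph{nonzero} free $KU/p$-module, and hence
\[
\Phi^H\bigl(L_{KU_G/p}S_G\bigr) \simeq L_{\Phi^H(KU_G)/p} S \simeq L_{KU/p} S
\]
for \emph{every} cyclic $H$, not only $H=e$. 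You in fact detect the mismatch yourself (``but this disagrees with our computation at nontrivial cyclic $H$'') and then in your final paragraph silently assert the correct answer; the discrepancy is not a subtlety of the $ECyc_+$ model but your earlier miscalculation. Once you have $\Phi^H(L_{KU_G/p}S_G)\simeq L_{KU/p}S$ for all cyclic $H$ and $\ast$ otherwise, \cref{prop:ConstantPhiH} gives \eqref{eq:pneqq}, and the geometric fixed points of $ECyc_+\smsh \mathrm{inf}_{G/G}^G L_{KU/p}S$ visibly agree, so \eqref{eq:ECYCL} follows once you have a comparison map.

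Your sketch of the comparison map is also off. Borel-completeness of $(KU_G)^\wedge_p$ is not the relevant input here (that pertains to the $p=q$ story). The paper instead produces an $E_\infty$-map $\mathrm{inf}_{G/G}^G L_{KU/p}S \to L_{KU_G/p}S_G$ by showing that $(L_{KU_G/p}S_G)^G$ is $KU/p$-local: if $X$ is $KU/p$-acyclic, then $\mathrm{inf}_{G/G}^G(X/p)\wedge \mathrm{inf}_{G/G}^G KU\simeq \ast$, and base change along the ring map $\mathrm{inf}_{G/G}^G KU \to KU_G$ shows $\mathrm{inf}_{G/G}^G X$ is $KU_G/p$-acyclic. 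With this map in hand, the equivalence \eqref{eq:ECYCL} follows from conservativity of the $\Phi^H$ as you suggest.
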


\begin{proof}
\cref{lem:LocalF} implies that
applying \cref{EX:EQUIV,,EX:PROJ,,EX:U} to the composite
\[
\Ho \Sp^G_{(p)} 
\xrtarr{(\Phi^H)}
\bigoplus_{(H)} \Ho \Sp^{hW_G(H)}_{(p)}
\longto
\Ho \Sp^{hW_G(H)}_{(p)} 
\longto
\Ho \Sp_{(p)},
\]
yields an equivalence
\[
\Phi^H L_{(KU_G)/p} S_G  \simeq L_{\Phi^H (KU_G)/p} \Phi^H S_G
\]
as non-equivariant spectra.
Since $\Phi^H$ preserves cofiber sequences, \cref{CyclicGeoFP,PhiNonCycKUzero} imply that
$\Phi^H ((KU_G)/p) \simeq \Phi^H(KU_G)/p$ is a free $KU/p$-module for $H$ cyclic, and contractible otherwise.
Thus, as non-equivariant spectra,
\begin{equation}
\label{eq:PHIHSKUG}
\Phi^H L_{(KU_G)/p} S_G  \simeq 
\begin{cases}
L_{\bigvee KU/p} S \simeq  L_{KU/p} S \qquad & \mbox{$H$ cyclic}\\
\** & \mbox{otherwise.}
\end{cases} 
\end{equation}
\cref{prop:ConstantPhiH} then implies \cref{eq:pneqq}.

Towards \cref{eq:ECYCL}, we note that
\[
    \Phi^H(ECyc_+ \wedge \mathrm{inf}^G_{G/G} L_{KU/p}S) \simeq 
    (ECyc^H)_+ \wedge \Phi^H(\mathrm{inf}^G_{G/G} L_{KU/p}S) \simeq
    \begin{cases}
        L_{KU/p}S \qquad & \mbox{$H$ cyclic}\\
        \** & \mbox{else},
    \end{cases}
\]
so that the geometric fixed points of $L_{KU_G/p}S_G$ agree with those of $ECyc_+ \wedge \mathrm{inf}^G_{G/G} L_{KU/p}S$.
It remains only to produce a map of $E_\infty$-rings of $G$-spectra 
\[ \mathrm{inf}_{G/G}^G L_{KU/p}S \rtarr L_{KU_G/p}S_G,\]
or equivalently a map of $E_\infty$-rings
\[
L_{KU/p}S \rtarr (L_{KU_G/p}S_G)^G.
\]
In other words, it suffices to show that $(L_{KU_G/p}S_G)^G$ is $KU/p$-local.

Thus let $X$ be a $KU/p$-acyclic. We wish to show that $[ X, (L_{KU_G/p}S_G)^G]=0$. The assumption is equivalent to the statement that $X/p$ is $KU$-acyclic. The vanishing is equivalent to the vanishing of 
\[
[ \mathrm{inf}_{G/G}^G X, L_{KU_G/p}S_G ]^G.
\]
Thus it suffices to show that $\mathrm{inf}_{G/G}^G X$ is $KU_G/p$-acyclic, or equivalently that $\mathrm{inf}_{G/G}^G X/p$ is $KU_G$-acyclic. But
\[
\mathrm{inf}_{G/G}^G X/p \wedge \mathrm{inf}_{G/G}^G KU \simeq \mathrm{inf}_{G/G}^G (X/p \wedge KU) \simeq \ast,
\]
so the result follows by base change along the $E_\infty$-ring map $\mathrm{inf}_{G/G}^G KU \rtarr KU_G$.
\end{proof}

\section{Computing using the arithmetic fracture square}
\label{sec:fracturesquare}

We need one final lemma.
\begin{lemma}
\label{lem:Q}
For all finite groups $G$ there is an equivalence of rational equivariant spectra
\[
\mathbb Q \otimes L_{KU_G} S_G \simeq H(\Q \otimes \underline{A}/\underline{J}),
\]
where $H(\Q \otimes \underline{A}/\underline{J})$ is the equivariant Eilenberg-MacLane spectrum.
\end{lemma}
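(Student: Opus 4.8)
The plan is to compute both sides after applying geometric fixed points $\Phi^H$ for every subgroup $H \le G$, using that the collection $(\Phi^H)$ is jointly conservative and symmetric monoidal, and that geometric fixed points commute with the rational localization. The key input is that $\Q \otimes KU_G$ is computed subgroup-by-subgroup: by \cref{CyclicGeoFP} and \cref{PhiNonCycKUzero}, $\Phi^H(\Q \otimes KU_G) \simeq \Q \otimes \Phi^H KU_G$ is a nonzero rational $KU$-module (a wedge of copies of $\Q \otimes KU$) when $H$ is cyclic, and is contractible otherwise.

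First I would reduce to geometric fixed points. Since $\Q \otimes (-)$ is a localization and the $\Phi^H$ descend to an equivalence with a product of Borel categories $p$-locally — but here we are working rationally, so I invoke \cref{thm:primetoq} (or its rational precursor from \cite{GrMa}) together with \cref{lem:LocalF}/\cref{EX:EQUIV} — we get
\[
\Phi^H \bigl(\Q \otimes L_{KU_G} S_G\bigr) \simeq L_{\Phi^H(\Q \otimes KU_G)} \Phi^H(\Q \otimes S_G) \simeq L_{\Q \otimes \Phi^H KU_G} (\Q \otimes \bS).
\]
For $H$ cyclic, $\Phi^H KU_G$ is a nonzero rational $KU$-module, so localizing $\Q \otimes \bS$ against it is the same as localizing against $\Q \otimes KU$, which (since $\Q \otimes \bS$ is already a rational, hence $KU_\Q$-local, spectrum) just recovers $\Q \otimes \bS \simeq H\Q$. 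For $H$ noncyclic, $\Phi^H KU_G \simeq \ast$, so the localization is contractible. Hence $\Phi^H(\Q \otimes L_{KU_G} S_G) \simeq H\Q$ for $H$ cyclic and $\ast$ otherwise.

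Next I would compute the right-hand side. The Mackey functor $\Q \otimes \underline{A}/\underline{J}$ has, by the argument in the proof of \cref{prop:VHCYC}, value $\prod_{(H) \le K \text{ cyclic}} \Q$ at $G/K$, with restrictions and transfers the evident projections and inclusions; this is precisely the rational Mackey functor whose geometric fixed points (i.e. the summand $V_H$) are $\Q$ for $H$ cyclic and $0$ otherwise. Since rationally the homotopy category of $G$-spectra is equivalent to the category of rational $G$-Mackey functors (Eilenberg–MacLane), we get $\Phi^H H(\Q \otimes \underline{A}/\underline{J}) \simeq H\Q$ for $H$ cyclic and $\ast$ otherwise, matching the left-hand side degreewise in homotopy. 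It remains to promote this levelwise agreement to an actual equivalence: I would produce a natural comparison map. Concretely, $L_{KU_G} S_G$ is a commutative ring $G$-spectrum, so $\mpi_0$ is a Green functor receiving a map from $\underline{A} = \mpi_0 S_G$; rationalizing, the unit $S_G \to \Q \otimes L_{KU_G} S_G$ factors through $H(\Q \otimes \underline{A})$ and then through $H(\Q \otimes \underline{A}/\underline{J})$ once one checks (via the $\Phi^H$ computation above, or via character theory as for $\mf J = \ker(\mf A \to \mf{R\Q})$) that $\Q \otimes \mf J$ dies in $\mpi_0$. This gives a map $H(\Q \otimes \underline{A}/\underline{J}) \to \Q \otimes L_{KU_G} S_G$ which is an equivalence on each $\Phi^H$ by the two computations above, hence an equivalence.

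\textbf{Main obstacle.} The routine parts are the two $\Phi^H$ computations; the genuinely delicate step is the identification of $L_{\Q \otimes \Phi^H KU_G}(\Q \otimes \bS)$ — i.e. verifying that a nonzero rational $KU$-module has the same Bousfield class as $KU_\Q$ on rational spectra, so that the localization is trivial — and, more importantly, constructing the comparison map as a map of $G$-spectra (rather than merely matching homotopy Mackey functors) so that joint conservativity of the $(\Phi^H)$ actually applies. The cleanest route for the latter is to note that both sides are rational, hence equivalent to $H$ of their $\mpi_0$, reducing everything to the isomorphism of Green functors $\mpi_0(\Q \otimes L_{KU_G} S_G) \cong \Q \otimes \underline{A}/\underline{J}$, which follows from \cref{prop:ConstantPhiH} (the rational analogue) applied to $X = \Q \otimes L_{KU_G} S_G$ with $T = \Q$.
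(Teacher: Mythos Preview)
Your proposal is correct and follows essentially the same route as the paper: identify $\Q\otimes L_{KU_G}S_G$ with $L_{\Q\otimes KU_G}S_G$, compute $\Phi^H$ of this using \cref{lem:LocalF} and the rational splitting, observe the answer is $H\Q$ for $H$ cyclic and $\ast$ otherwise, and then invoke the rational analogue of \cref{prop:ConstantPhiH}. The only difference is cosmetic: you spend effort worrying about producing a comparison map before realizing (in your final paragraph) that this is unnecessary because rational $G$-spectra are determined by their homotopy Mackey functors---the paper goes straight to this last step without the detour.
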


\begin{proof}
It follows from \cite[Proposition 2.11]{Bousfield}, that $\Q \otimes L_{KU_G} S_G \simeq L_{\Q \otimes KU_G} S_G$. The functor $(\Phi^H)$ is an equivalence from the category of rational $G$-equivariant spectra to the product of the rational Borel-equivariant categories.

Now, by \cref{lem:LocalF}, we have equivalences
$\Phi^H(L_{\Q \otimes KU_G} S_G) \simeq L_{\Phi^H(\Q \otimes KU_G)} \Phi^HS_G \simeq L_{\Phi^H(\Q \otimes KU_G)} S$
of non-equivariant spectra.
Moreover, we have
\[
     L_{\Phi^H(\Q \otimes KU_G)} S \simeq
    \begin{cases}
    L_{H\mathbb Q}S \simeq H \mathbb Q \qquad & \mbox{$H$ cyclic}\\
    \** \qquad & \mbox{else},
    \end{cases}
\]
since $\Q \otimes \Phi^H KU_G \simeq *$ when $H$ is not cyclic,
and 
$\Q \otimes \Phi^H KU_G$ is a nontrivial $H\Q$-module when $H$ is cyclic.
The rational analogue of \cref{prop:ConstantPhiH} then implies that 
$\mathbb Q \otimes L_{KU_G} S_G \simeq H(\Q \otimes \underline{A}/\underline{J})$.
\end{proof}

We can now prove our main result.

\begin{proof}[Proof of \cref{thm:main}]
Adapting \cref{eq:fracsq} with $X = L_{KU_G}S_G$ yields the following homotopy pullback square of 
$E_\infty$-rings in $G$-spectra
\begin{equation}
\label{FractureSquare}
    \begin{tikzcd}
        L_{KU_G} S_G \arrow[d] \arrow[r] 
        &
        \displaystyle\prod_{p} L_{KU_G/p} S_G \arrow[d,"g"]
        \\
        \mathbb Q \otimes L_{KU_G} S_G \arrow[r,"f"]
        &
        \mathbb Q \otimes \displaystyle\prod_{p} L_{KU_G/p} S_G.
    \end{tikzcd}    
\end{equation}
This yields a long-exact sequence in Mackey functor-valued homotopy. Since $\underline{\pi}_1 L_{KU_G/p}S_G$ is trivial except when $p=2$ or $p=q$, when it is torsion by \cite{Bousfield}*{Corollary~4.5} with \cref{prop:pneqq} or \cref{prop:maincalcp=q}, we have the long-exact sequence:
\[
    0 \longto
    \underline{\pi}_0 L_{KU_G}S_G \longto
    \left( \mathbb Q \otimes \underline{\pi}_0 L_{KU_G} S_G \right) \oplus \left( \prod \underline{\pi}_0 L_{KU_G/p} S_G \right) \overset{f-g}{\longto}
    \mathbb Q \otimes \prod \underline{\pi}_0 L_{KU_G/p} S_G.
\]
Applying \cref{lem:Q,,prop:maincalcp=q,,prop:pneqq}, this is the long exact sequence of Mackey functors
\[
0 \longto 
\underline{\pi}_0 L_{KU_G}S_G \longto 
(\Q \otimes \underline{A}/\underline{J}) \oplus (\prod_{p} (\mf{A}/\mf{J})^{\wedge}_p) \times (\mf{A}/\mf{J} \otimes \mathbb F_2)
\xrightarrow{f-g}
\Q \otimes \left( \prod_{p} (\mf{A}/\mf{J})^{\wedge}_p \times (\mf{A}/\mf{J} \otimes \mathbb F_2) \right).
\]
The factor containing $\F_2$ arises from the fact that $\pi_0L_{KU/2}S \cong \Z_2 \oplus \F_2$. Note that there is an isomorphism of Mackey functors 
\[
\Q \otimes \left( \prod_{p} (\mf{A}/\mf{J})^\wedge_p \times (\mf{A}/\mf{J} \otimes \mathbb F_2) \right)
\cong \Q \otimes \left( \prod_{p} (\mf{A}/\mf{J})^{\wedge}_p \right). 
\]
It follows that $\mf{A}/\mf{J} \otimes \mathbb F_2$ is in the kernel of $f-g$. 
The remaining part of the exact sequence is the arithmetic fracture square for $\underline{A}/\underline{J}$. As $A(H)/J(H)$ is a finitely generated free abelian group for each $H \subseteq G$, we have $\underline{A}/\underline{J} \oplus (\mf{A}/\mf{J} \otimes \mathbb F_2) = \ker(f-g)$. 
\end{proof}

Given \cref{thm:main}, it is natural to wonder if there is an isomorphism 
\[
\ul{\pi}_i L_{KU_G} S_G \cong \ul{A}/\ul{J} \otimes \pi_i L_{KU} S
\]
or even if $(L_{KU_G}S_G)^H \simeq L_{KU}S \otimes A(H)/J(H)$. This is already false for $i=1$.

\begin{prop}
For $G = C_3$, we have 
\[
\ul{\pi}_1 L_{KU_G} S_G \cong \big( \ul{A} \otimes \pi_1 L_{KU}S \big) \oplus \ul{T},
\]
where $\ul{T}$ is the unique $C_3$-Mackey functor with $\ul{T}(G/e) = 0$ and $\ul{T}(G/G) = \Z/3$. 
\end{prop}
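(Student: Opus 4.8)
The plan is to feed $X = L_{KU_{C_3}}S_{C_3}$ into the arithmetic fracture square \eqref{FractureSquare} and extract $\ul{\pi}_1$ from the associated Mayer--Vietoris long exact sequence. By \cref{lem:Q} the rational corner $\Q \otimes L_{KU_{C_3}}S_{C_3} \simeq H(\Q \otimes \ul{A}/\ul{J})$ has homotopy concentrated in degree $0$, so for each $n \geq 1$ the sequence reads
\[
\ul{\pi}_{n+1}\Big(\Q \otimes \textstyle\prod_p L_{KU_{C_3}/p}S_{C_3}\Big) \longto \ul{\pi}_n L_{KU_{C_3}}S_{C_3} \longto \textstyle\prod_p \ul{\pi}_n L_{KU_{C_3}/p}S_{C_3} \longto \ul{\pi}_n\Big(\Q \otimes \textstyle\prod_p L_{KU_{C_3}/p}S_{C_3}\Big).
\]
First I would observe that for every prime $p$ and every $m \geq 1$, the Mackey functor $\ul{\pi}_m L_{KU_{C_3}/p}S_{C_3}$ is levelwise finite and vanishes for all but finitely many $p$: for $p \neq 3$ it is $\ul{A}/\ul{J} \otimes \pi_m L_{KU/p}S$ by \cref{prop:pneqq}, and $\pi_m L_{KU}S$ is a finite group for $m \geq 1$ (\cite{Bousfield}*{Corollary~4.5}); for $p = 3$ finiteness follows from \cref{FiberSequenceProp} together with the injectivity-with-finite-cokernel of $\psi^\ell - 1$ on the positive-degree homotopy of $(KU_{C_3})^{\wedge}_3$ as in the proof of \cref{prop:maincalcp=q}. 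Consequently each $\prod_p \ul{\pi}_m L_{KU_{C_3}/p}S_{C_3}$ with $m \geq 1$ is levelwise finite, so its rationalization vanishes; hence the two outer terms above are zero for $n = 1$ and we get $\ul{\pi}_1 L_{KU_{C_3}}S_{C_3} \cong \prod_p \ul{\pi}_1 L_{KU_{C_3}/p}S_{C_3}$.

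Next I would identify the (at most three) nonzero factors. For $p \geq 5$ we have $\pi_1 L_{KU/p}S = 0$, so that factor is trivial. For $p = 2$, \cref{prop:pneqq} gives the factor $\ul{A}/\ul{J} \otimes \pi_1 L_{KU/2}S$; here I would note that $\ul{J}$ vanishes for $G = C_3$, since a virtual $C_3$-set $a[C_3/C_3] + b[C_3/e]$ has $|X^e| = a + 3b$ and $|X^g| = a$ at a generator $g$, which both vanish only for $a = b = 0$. Hence $\ul{A}/\ul{J} = \ul{A}$, and since $\pi_1 L_{KU}S$ is a finite $2$-group (it has no odd torsion because $\pi_1 L_{KU/p}S = 0$ for odd $p$) we have $\pi_1 L_{KU/2}S = \pi_1 L_{KU}S$. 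So the $p = 2$ factor is $\ul{A} \otimes \pi_1 L_{KU}S$.

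The final and most involved step is the factor at $p = q = 3$, which by \cref{FiberSequenceProp} and $\ul{\pi}_1 (KU_{C_3})^{\wedge}_3 = 0$ equals $\coker\big(\psi^\ell - 1 \colon \ul{\pi}_2 (KU_{C_3})^{\wedge}_3 \to \ul{\pi}_2 (KU_{C_3})^{\wedge}_3\big)$ with $\ul{\pi}_2 (KU_{C_3})^{\wedge}_3 \cong \ul{RU}^{\wedge}_3\{\beta\}$. At level $C_3/e$ the map is multiplication by $\ell - 1$, a unit in $\Z_3$ since $\ell \equiv 2 \pmod 3$, so its cokernel is $0$ there. At level $C_3/C_3$, identifying $RU(C_3)^{\wedge}_3 \cong \Z_3[x]/(x^3 - 1)$ with $\psi^\ell(x) = x^\ell = x^2$ and $\psi^\ell(\beta) = \ell\beta$, a short Smith-normal-form computation with the $3 \times 3$ matrix of $\psi^\ell - 1$ in the basis $\{\beta, x\beta, x^2\beta\}$ yields cokernel $\Z_3/(\ell^2 - 1)$. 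The one genuinely delicate point --- and the main obstacle --- is that this must come out to $\Z/3$ regardless of the choice of primitive root: it uses that $\ell$ is primitive modulo $9$ (and higher powers), not just modulo $3$, which forces $v_3(\ell + 1) = 1$ and hence $v_3(\ell^2 - 1) = 1$. Since every restriction and transfer into or out of $C_3/e$ is necessarily zero, a $C_3$-Mackey functor is determined by its two values, so this factor is exactly $\ul{T}$. Collecting the three factors gives $\ul{\pi}_1 L_{KU_{C_3}}S_{C_3} \cong (\ul{A} \otimes \pi_1 L_{KU}S) \oplus \ul{T}$.
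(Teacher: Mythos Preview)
Your proof is correct and follows essentially the same route as the paper: reduce via the arithmetic fracture square to the primes $p=2$ and $p=3$, handle $p=2$ via \cref{prop:pneqq} together with $\ul{J}=0$ for cyclic $G$, and compute the $p=3$ contribution as the cokernel of $\psi^\ell-1$ on $\ul{RU}^\wedge_3\{\beta\}$. The only notable difference is that the paper simply fixes $\ell=2$ and reads off the cokernel from an explicit $3\times 3$ matrix, whereas you keep $\ell$ general, obtain $\Z_3/(\ell^2-1)$, and then correctly invoke primitivity of $\ell$ modulo $9$ (implicit in the classical input from \cite{Bousfield} used in \cref{FiberSequenceProp}) to force $v_3(\ell^2-1)=1$; your Mayer--Vietoris bookkeeping in degrees $1$ and $2$ is also spelled out more carefully than in the paper.
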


\begin{pf}
We first recall \cite{Bousfield}*{Corollary~4.5, Corollary~4.6} that $\pi_1 L_{KU} S\iso \pi_1 L_{KU/2}S \iso (\Z/2)^2$ and that $\pi_1 L_{KU/p} S \iso 0$ if $p\neq 2$.
The pullback square \cref{FractureSquare} implies that \[ \mpi_1 \left( L_{KU_G}S_G \right) \iso \mpi_1 \Big( \displaystyle\prod_{p} L_{KU_G/p} S_G \Big) \iso \prod_p \mpi_1 \left( L_{KU_G/p} S_G \right). 
\]
\cref{prop:pneqq} gives that, for $p\neq 3$, 
\[ 
\mpi_1 L_{KU_G/p} S_G  \iso \mf{A} \otimes \pi_1 L_{KU/p} S,
\]
since the ideal $\ul{J}$ vanishes if $G$ is cyclic. 
Thus it follows that 
\[
\ul{\pi}_1 L_{KU_G} S_G \iso \mpi_1\left( L_{KU_G/3} S_G \right) \oplus \left(\ul{A} \otimes \pi_1 L_{KU/2}S\right).
\]
It remains to determine $\mpi_1\left( L_{KU_G/3} S_G \right)$.

We may compute $\mpi_1 \left( L_{KU_G/3} S_G\right) $
by use of the fiber sequence \cref{eq:fiberseq}.
As in \cref{prop:maincalcp=q}, this Mackey functor can be computed as
\[
\mpi_1 \left(L_{\KUG/3}S_G\right) \iso \coker \left( \psi^\ell - 1\colon (\ul{RU}_G)^\wedge_3\{\beta\} \to (\ul{RU}_G)^\wedge_3\{\beta\}
\right). 
\]
Here, we may take $\ell=2$.
At the underlying level, this is the classical $\pi_1(L_{KU/3}S)$, which vanishes. At the fixed point level, let us again write $RU(C_3) = \Z[x]/(x^3-1)$. Then $\psi^2(x\cdot \beta) = 2 x^2 \cdot \beta$ and similarly $\psi^2(x^2\cdot \beta) = 2x\cdot \beta$. The homomorphism
\[
\psi^2-1\colon RU(C_3)^\wedge_3 \{\beta\} \rtarr RU(C_3)^\wedge_3 \{\beta\}
\]
may therefore be represented by the matrix
\[
\begin{pmatrix}
2-1 & 0 & 0 \\
0 & -1 & 2 \\
0 & 2 & -1
\end{pmatrix}
\sim
\begin{pmatrix}
1 & 0 & 0 \\
0 & -1 & 2 \\
0 & 0 & 3
\end{pmatrix}.
\]
We conclude that the cokernel is isomorphic to $\Z/3$.
\end{pf}

\section{The $G$-$E_\infty$-ring structure on $L_{\KUG}S_G$}
\label{sec:Hillforthewin}

In this final section, we apply the results of \cref{sec:geometric} and \cite{Hill19} to show that $L_{\KUG}S_G$ is a $G$-$E_\infty$-ring spectrum when $G$ is an odd $q$-group. This implies that $\mpi_0L_{\KUG}S_G$ is a Tambara functor.
Moreover, we determine this structure in the case where $G$ is cyclic.

We will make use of the norm construction
$ N_H^G \colon Sp^H \rtarr Sp^G$
(see \cite{HHR}*{Section~2.2.3}).
This lifts to a functor on $H$-$E_\infty$-rings, where it participates in an adjunction
\begin{equation}
\label{NormResAdjunction}
\begin{tikzcd}
H\mathrm{-}E_{\infty}\mathrm{-ring}(Sp^H) \arrow[r, shift left = 1ex, "N_H^G"]
&
G\mathrm{-}E_{\infty}\mathrm{-ring}(Sp^G). \arrow[l, shift left = 1ex, "\downarrow^G_H"]
\end{tikzcd}
\end{equation}
We will also follow \cite{Hill19} in writing $N^{G/H}$ for the composite functor $N_H^G \circ \downarrow^G_H$ on $G$-spectra. More generally, by decomposing a finite $G$-set $T$ into a disjoint union of orbits, the norm $N^T$ can be interpreted as the smash product of norms of the form $N^{G/H_i}$, as in \cite{Hill19}*{Definition~2.2}.

\begin{proposition}
\label{LKUGSH_GEinf}
For $G$ an odd $q$-group, $L_{\KUG}S_G$ admits the structure of a $G$-$E_{\infty}$-ring.
\end{proposition}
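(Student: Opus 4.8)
The plan is to apply the main criterion of \cite{Hill19}, which says that a Bousfield localization $L_E X$ of a $G$-$E_\infty$-ring $X$ is again a $G$-$E_\infty$-ring provided the localization is \emph{compatible with all the norms}: for every pair of subgroups $H \leq K \leq G$, the norm $N_H^K$ must carry $E$-acyclic (equivalently $L_E$-equivalences) $H$-spectra to $K$-local equivalences, or more precisely, one checks that smashing the localization map $X \to L_E X$ with the relevant localized unit is still an equivalence after applying $N^T$ for every finite $G$-set $T$. Since $S_G$ is a $G$-$E_\infty$-ring and, by \cite{Hill19}*{Corollary~3.12}, the localization $L_{KU_G} S_G$ is at least $E_\infty$, the only thing to verify is this norm-compatibility condition for the Bousfield class of $KU_G$.

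First I would reduce the norm-compatibility condition to a statement about geometric fixed points. The key input is that the collection of geometric fixed point functors $\{\Phi^H : H \leq G\}$ is jointly conservative and symmetric monoidal, and that geometric fixed points interact with norms via the diagonal formula $\Phi^K N_H^K(Y) \simeq \Phi^H(Y)$ (for $H \leq K$, after the appropriate identifications; more generally $\Phi^H N^{G/K}(X) \simeq \Phi^{?}(X)^{\wedge ?}$, the usual norm–geometric-fixed-points interchange of \cite{HHR}). Using this, checking that $N^T$ preserves $KU_G$-equivalences reduces to checking, for each $H \leq G$, that the Bousfield class of $\Phi^H KU_G$ in non-equivariant spectra behaves well under the finite smash powers that appear when one unwinds $\Phi^{H} N^{T}$.

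Next I would feed in the computations of \cref{sec:geometric}. By \cref{PhiNonCycKUzero}, $\Phi^H KU_G \simeq \ast$ whenever $H$ is non-cyclic, so those geometric fixed points impose no condition — the localization is trivial there. When $H$ is cyclic, \cref{CyclicGeoFP} identifies $\Phi^H KU_G \simeq KU \otimes \overline{RU}(H)[\tfrac1q]$, which is a (finite, free) wedge of copies of $KU[\tfrac1q]$; in particular its Bousfield class is that of $KU$ (smashing out the $\mathbb{Z}[\tfrac1q]$ is irrelevant Bousfield-theoretically since $KU$ already has $\mathbb{Z}[\tfrac1q]$ as a retract-free summand — more carefully, $KU \otimes \overline{RU}(H)[\tfrac1q]$ and $KU$ have the same acyclics because a finite free module over $KU$ is Bousfield-equivalent to $KU$). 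So on every geometric fixed point the relevant localization is either trivial or the ordinary $L_{KU}$ localization, and the classical fact that the $KU$-localization of the sphere is an $E_\infty$-ring — together with the smash-power stability of the Bousfield class of $KU$ — supplies the needed compatibility. Assembling these across all $H$ via the joint conservativity of $\{\Phi^H\}$ verifies the Hill criterion, and hence $L_{KU_G} S_G$ is a $G$-$E_\infty$-ring.

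The main obstacle I anticipate is the bookkeeping in the second step: making the interchange $\Phi^H \circ N^T$ precise (including the non-normal case, handled by first restricting to normalizers as in \cref{sec:ReviewFix}) and confirming that the Bousfield class of $KU$ — equivalently, of a finite free $KU$-module — is stable under the finite smash powers $E \mapsto E^{\wedge n}$ that occur, so that an $E$-equivalence of $H$-spectra is sent by $N_H^K$ to a $\Phi^K KU_G$-equivalence of $K$-spectra. Once that stability is in hand (it follows from $KU^{\wedge n} \simeq KU$ after suitable completion/localization, or more robustly from the fact that $KU$-acyclicity is detected by $KU_*(-) = 0$, which is visibly closed under smashing with anything), the rest is a direct invocation of \cite{Hill19}.
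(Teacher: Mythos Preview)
Your strategy is exactly the paper's: invoke \cite{Hill19}*{Theorem~3.9}, reduce to showing $N^{G/H}$ preserves $KU_G$-acyclics, and check this on geometric fixed points via the interchange formula $\Phi^K N^{G/H}(X) \simeq \bigwedge_{KgH} \Phi^{K^g\cap H} X$ (this is \cite{Hill19}*{Lemma~2.1}), feeding in \cref{CyclicGeoFP,PhiNonCycKUzero}.

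There is one genuine slip. You claim that for nontrivial cyclic $H$ the Bousfield class of $\Phi^H KU_G \simeq KU \otimes \overline{RU}(H)[\tfrac1q]$ is that of $KU$, on the grounds that it is ``a finite free module over $KU$''. It is not: $\overline{RU}(H)[\tfrac1q]$ is free over $\Z[\tfrac1q]$, not over $\Z$, so $\Phi^H KU_G$ is a finite free $KU[\tfrac1q]$-module and is Bousfield-equivalent to $KU[\tfrac1q]$, which is strictly smaller than $\langle KU\rangle$ (e.g.\ $S/q$ distinguishes them). Hence from $X$ being $KU_G$-acyclic you only get that $\Phi^H X$ is $KU[\tfrac1q]$-acyclic for $H\neq e$, together with $\Phi^e X$ being $KU$-acyclic.

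This does not break the argument, but the bookkeeping must respect it. In the smash product $\bigwedge \Phi^{K^g\cap H} X$, every subgroup $K^g\cap H$ is cyclic; for $K$ nontrivial cyclic you need only $KU[\tfrac1q]$-acyclicity of the result, and since each factor is at least $KU[\tfrac1q]$-acyclic (the $K^g\cap H=e$ factors are $KU$-acyclic, hence a fortiori $KU[\tfrac1q]$-acyclic), you are done. For $K=e$ all factors are $\Phi^e X$, giving $KU$-acyclicity. With this correction your proof is the paper's proof.
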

\begin{proof}
By \cite[Theorem 3.9]{Hill19}, it suffices to show that for $L \subseteq G$ and each $L$-set $T$, the norm $N^{G \times_L T}(-)$ preserves $\KUG$-acyclics. After decomposing $T$ into transitive $L$-sets $\coprod_i L/H_i$, we have
\[
N^{G \times_L T}(X) \cong \bigwedge_i N^{G/H_i}(X).
\]
Thus it suffices to show that each norm $N^{G/H_i}(-)$ preserves $\KUG$-acyclics. Therefore we may assume that $G \times_L T \cong G/H$.

Let $X$ be a $\KUG$-acyclic $G$-spectrum. Since $\Phi^K \KUG \simeq \ast$ for $K \subseteq G$ noncyclic, this is equivalent to the statement that $\Phi^K X$ is $\Phi^K \KUG$-acyclic for $K \subseteq G$ cyclic. Further, since $\Phi^K \KUG$ is a free $KU[\frac{1}{q}]$-modules for $K$ nontrivial cyclic and $\Phi^e \KUG$ is a free $KU$-module, we have that $\Phi^K X \wedge KU[\frac{1}{q}] \simeq \ast$ for $K$ nontrivial  cyclic and $\Phi^e X \wedge KU \simeq \ast$. 

Thus, to show that $N^{G/H}X$ is $\KUG$-acyclic, it suffices by \cite{Hill19}*{Proposition 3.2} to show that 
$\Phi^K(N^{G/H}X)$ is $KU[\frac{1}{q}]$-acyclic for $K$ nontrivial cyclic and that $\Phi^e(N^{G/H}X)$ is $KU$-acyclic. \cite[Lemma 2.1]{Hill19} provides us with an equivalence
\[
\Phi^K(N^{G/H}X) \simeq \bigwedge_{KgH \in K\\ G/H} \Phi^{K^g \cap H} X,
\]
where $K^g = gKg^{-1}$. For $K$ a nontrivial cyclic subgroup of $G$, $K^g \cap H$ is cyclic and may be trivial. Either way, it follows that $\Phi^{K^g \cap H} X \wedge KU[\frac{1}{q}] \simeq \ast$. If $K$ is trivial, then all of the factors are $\Phi^eX$ and these are $KU$-acyclic. Thus $\Phi^K(N^{G/H}X)$ is $\Phi^K \KUG$-acyclic for each cyclic subgroup $K \subseteq G$ and $N^{G/H} X$ is $\KUG$-acyclic.
\end{proof}

Recall from \cite{HHR}*{Section~2.3.3} that if $X$ is a $G$-$E_\infty$-ring, then 
for $x \in \mf{\pi}_0^H(X)$, the norm on $x$ may be calculated as the composition
\[ 
S_G \simeq N_H^G(S_H) \xrtarr{N_H^G(x)} N_H^G(\downarrow^G_H X) \xrightarrow{\varepsilon} X,
\]
where $\varepsilon$ is the counit of the adjunction \eqref{NormResAdjunction}.

In the case where $G = C_{q^k}$ is a cyclic $q$-group, we will simplify our notation and write
\[
    N_i^j = N_{C_{q^i}}^{C_{q^j}} \qquad \text{and} \qquad R_i^j = R_{C_{q^i}}^{C_{q^j}}
\]
for the norm and restriction maps of a $C_{q^k}$-Tambara functor.

The following lemma was suggested to us by Balderrama. 
\begin{lemma}[Balderrama]
    \label{lemma:Balderrama}
    Let $G = C_{q^k}$ be a cyclic odd $q$-group. For $0 \leq i \leq k$, 
    let $x_i$ be the generator of 
    \begin{equation}
        \label{eq:LKUSCyc}
        {\pi}_0^{C_{q^i}} L_{KU_{C_{q^k}}}S_{C_{q^k}} \cong 
        A(C_{q^i})[x_i]/(x_i^2,2x_i).
    \end{equation}
    Then for all $0 \leq i \leq j \leq k$, $N_{i}^{j}(x_i) \neq 0$.
\end{lemma}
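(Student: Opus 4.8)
The plan is to detect $N_i^j(x_i)$ after applying the geometric fixed point functor $\Phi^{C_{q^j}}$, exploiting the interaction of geometric fixed points with norms. Since norms within $C_{q^j}$ are computed in the restricted Tambara functor and restriction commutes with $KU_G$-localization (apply \cref{lem:LocalF} to $\downarrow^{C_{q^k}}_{C_{q^j}}$, whose left adjoint is induction, using the projection formula), we may assume $G=C_{q^j}$ and identify $x_i$ with the polynomial generator of $\pi_0^{C_{q^i}}\cong A(C_{q^i})[x_i]/(x_i^2,2x_i)$, the identification coming from \cref{thm:main} (note $\mf{J}$ vanishes on cyclic groups). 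As $x_i$ is $2$-power torsion and $N_i^j(x_i)$ is again torsion (it rationalizes to the norm of $0$, which is $0$), it suffices to work $2$-locally. The HHR diagonal $\Phi^{G}N_H^G\simeq\Phi^H$ then identifies $\Phi^{C_{q^j}}\!\big(N_i^j(x_i)\big)$ with $\kappa_i^j\big(\Phi^{C_{q^i}}(x_i)\big)$, where $\kappa_i^j\colon \Phi^{C_{q^i}}\!\downarrow L_{\KUG}S_{G}\to\Phi^{C_{q^j}}L_{\KUG}S_{G}$ is the ``geometric norm,'' namely $\Phi^{C_{q^j}}$ of the counit $N_i^j\!\downarrow\to\id$, a map of $E_\infty$-rings. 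So I must show $\Phi^{C_{q^i}}(x_i)\neq 0$ and that $\kappa_i^j$ is injective on $\pi_0$.

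For the first claim: \cref{CyclicGeoFP} together with \cref{lem:LocalF} applied to $\Phi^{C_{q^i}}$ (its left adjoint, geometric inflation, sends $\Phi^{C_{q^i}}\KUG$-acyclics to $\KUG$-acyclics, as one checks by testing on geometric fixed points) gives a $2$-local equivalence $\Phi^{C_{q^i}}(L_{KU_{C_{q^i}}}S_{C_{q^i}})_{(2)}\simeq L_{(\Phi^{C_{q^i}}KU_{C_{q^i}})_{(2)}}S_{(2)}\simeq(L_{KU}S)_{(2)}$ for every $i$, since $\Phi^{C_{q^i}}KU_{C_{q^i}}$ is a free $KU[\tfrac1q]$-module of positive rank for $i\geq1$ (and is $KU$ for $i=0$) and $q$ is already invertible in $\Z_{(2)}$. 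Under this equivalence \cref{SplittingComparison} and \cref{AlternativeVH} say that the map $\pi_0^{C_{q^i}}(L_{KU_{C_{q^i}}}S_{C_{q^i}})_{(2)}=A(C_{q^i})_{(2)}[x]/(x^2,2x)\to\pi_0\Phi^{C_{q^i}}(\cdots)_{(2)}$ is the quotient by transfers from proper subgroups, i.e. the $C_{q^i}$-mark tensored with the identity; hence $x_i=[C_{q^i}/C_{q^i}]\cdot x\mapsto x\neq0$ in $\Z_{(2)}[x]/(x^2,2x)$.

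For the second claim I would show every $\kappa_i^j$ is the identity $2$-locally. By transitivity of norms and of counits, $\kappa_i^j=\kappa_{j-1}^j\circ\cdots\circ\kappa_i^{i+1}$, so it suffices to treat one step $\kappa_m^{m+1}\colon(L_{KU}S)_{(2)}\to(L_{KU}S)_{(2)}$. Restricting the localization map $\lambda\colon S_G\to L_{\KUG}S_{G}$ and using the counit–unit triangle identity for $N_{C_{q^m}}^{C_{q^{m+1}}}\dashv{\downarrow}$, one finds that $\kappa_m^{m+1}$ precomposed with the localization $S_{(2)}\to(L_{KU}S)_{(2)}$ equals that localization map itself: under $\Phi^{C_{q^{m+1}}}$ and the HHR identification the norm contribution collapses to $\Phi^{C_{q^m}}(\lambda)$, which is again a $KU_{(2)}$-localization map. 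Since $(L_{KU}S)_{(2)}$ is $KU_{(2)}$-local and $S_{(2)}\to(L_{KU}S)_{(2)}$ is the $KU_{(2)}$-localization, restriction along it induces a bijection $[(L_{KU}S)_{(2)},(L_{KU}S)_{(2)}]\xrightarrow{\ \sim\ }[S_{(2)},(L_{KU}S)_{(2)}]$, forcing $\kappa_m^{m+1}=\id$. Therefore $\Phi^{C_{q^j}}\!\big(N_i^j(x_i)\big)=\Phi^{C_{q^i}}(x_i)\neq0$, so $N_i^j(x_i)\neq0$.

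The main obstacle is the bookkeeping linking norms, geometric fixed points and localizations: verifying that the HHR diagonal respects the $E_\infty$-ring structures (so that $\kappa_i^j$ is genuinely a ring map, pinned down by the Hom-set argument) and checking the hypotheses of \cref{lem:LocalF} for the geometric inflation functors. One should note why the naive approach fails and why this one works: $x$ is $2$-torsion, hence invisible to the $C_q$-Tate construction, so the Tate-valued Frobenius does not detect it; but passing to the honest geometric fixed points of $L_{KU_{C_{q^m}}}S_{C_{q^m}}$ and working $2$-locally, inverting $q$ in $L_{KU}S$ changes nothing, which is exactly what makes each $\kappa_i^j$ an equivalence.
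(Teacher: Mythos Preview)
Your approach is correct and shares the paper's core detection strategy---apply $\Phi^{C_{q^j}}$ and invoke the HHR diagonal $\Phi^{G}N_H^G\simeq\Phi^H$---but the paper takes a shorter path. Rather than analyzing $N_i^j(x_i)$ directly, the paper first reduces to the single case $N_0^k(x_0)$: since $x_0^2=0$ one has $N_0^i(x_0)^2=0$, and as $A(C_{q^i})$ is reduced this forces $x_i\mid N_0^i(x_0)$; writing $N_0^j=N_i^j\circ N_0^i$ then shows that $N_0^j(x_0)\neq 0$ implies $N_i^j(x_i)\neq 0$. With $i=0$ the HHR diagonal becomes $\Phi^{C_{q^k}}N_e^{C_{q^k}}\simeq\mathrm{id}$, so $\Phi^{C_{q^k}}$ of the composite \cref{eq:NX} is simply $S\xrightarrow{x_0}L_{KU}S\to L_{KU}S[\tfrac1q]$, and one only has to observe that $x_0$, being $2$-torsion with $q$ odd, survives inverting $q$.

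What your route buys is a direct argument for each $N_i^j(x_i)$ without the divisibility trick, at the cost of identifying every $\Phi^{C_{q^i}}(L_{KU_G}S_G)_{(2)}$ and checking that each geometric norm $\kappa_m^{m+1}$ is the identity via the universal property of localization. The paper's reduction to $i=0$ sidesteps all of that: there is no intermediate $\Phi^{C_{q^i}}$ to compute and no $\kappa$ to pin down, only the single evident ring map $L_{KU}S\to L_{KU}S[\tfrac1q]$. Both arguments ultimately rest on the same two inputs (\cref{CyclicGeoFP} and the HHR diagonal), but the paper's is noticeably more economical.
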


\begin{proof}
As $x_0^2 = 0 $, it follows that $N_{0}^{i}(x_0)^2=0.$
Since $A(G)$ has no nilpotents, we conclude that $x_i$ divides  $N_{0}^{i}(x_0)$. 
Thus, it suffices to prove that $N_{0}^{k}(x_0) \neq 0$. 
By the discussion above, we see that $N_{0}^{k}(x_0)$ is the composite
\begin{equation}
    \label{eq:NX}
    S_{C_{q^k}} \xrtarr{N_{e}^{C_{q^k}}(x_0)} 
    N_{e}^{C_{q^k}}(\downarrow^{C_{q^k}}_e L_{KU_{C_{q^k}}}S_{C_{q^k}}) \xrightarrow{\varepsilon} L_{KU_{C_{q^k}}}S_{C_{q^k}}.
\end{equation} 
Applying the geometric fixed point functor $\Phi^{C_{q^k}}(-)$ to this gives the composite
\[
S \xrightarrow{x_0} L_{KU}S \to L_{KU}S \left[\frac{1}{q}\right]
\]
by \cite{HHR}*{Section~2.5.4} and \cref{CyclicGeoFP}.
Since $x_0$ is not $q$-torsion (since $q$ is odd), this map is nonzero. Thus the original composite \cref{eq:NX} must be nonzero as well.
\end{proof}

We recall that $A(C_{q^i})$ is a free abelian group with generators $y_j = [C_{q^i}/C_{q^{j}}]$ for $0 \leq j \leq i$. Then $y_i=1$ is the unit for the ring structure, and $A(C_{q^i})$ is generated as a ring by the $y_j$ with $j\neq i$.
The restriction map $R^{i+1}_i \colon A(C_{q^{i+1}}) \to A(C_{q^i})$ 
is given on the multiplicative generators by $R^{i+1}_i(y_j)=q y_j$, where $0 \leq j < i+1$.

Since the Tambara functor structure on $\mf{A}$ is known, the following proposition determines the Tambara functor structure on $\mf{\pi}_0 L_{KU_{C_{q^k}}}S_{C_{q^k}}$.

\begin{proposition}
    Let $G = C_{q^k}$ be a cyclic odd $q$-group.
    With notation as in \cref{eq:LKUSCyc} and above, $N_i^{i+1}(x_i) = x_{i+1}(1+y_i)$. 
\end{proposition}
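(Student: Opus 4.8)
The plan is to pin down $N_i^{i+1}(x_i)$ inside the ring
\[
R := \pi_0^{C_{q^{i+1}}}L_{KU_{C_{q^k}}}S_{C_{q^k}} \cong A(C_{q^{i+1}})[x_{i+1}]/(x_{i+1}^2,2x_{i+1})
\]
by first constraining its shape with a multiplicativity argument and then eliminating the remaining ambiguity using restriction to the maximal proper subgroup together with \cref{lemma:Balderrama}. Since $L_{KU_{C_{q^k}}}S_{C_{q^k}}$ is a $G$-$E_\infty$-ring by \cref{LKUGSH_GEinf}, its $\mpi_0$ is a Tambara functor and $N_i^{i+1}$ is a well-defined multiplicative map. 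Every element of $R$ is uniquely of the form $a + b\,x_{i+1}$ with $a \in A(C_{q^{i+1}})$ and $b \in A(C_{q^{i+1}})\otimes\F_2$, and I write $N_i^{i+1}(x_i) = a + b\,x_{i+1}$ in this form.

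First I would show $a = 0$, repeating the argument from the proof of \cref{lemma:Balderrama}: multiplicativity of the norm gives $N_i^{i+1}(x_i)^2 = N_i^{i+1}(x_i^2) = N_i^{i+1}(0) = 0$, while $(a+b\,x_{i+1})^2 = a^2$ since $2x_{i+1} = x_{i+1}^2 = 0$; hence $a^2 = 0$ in $A(C_{q^{i+1}})$, and as the Burnside ring is reduced (its mark homomorphism into a product of copies of $\Z$ is injective, \cite{tomDieck}*{Chapter~5}) we conclude $a=0$. Thus $N_i^{i+1}(x_i) = b\,x_{i+1}$, and it remains to identify $b$ modulo $2$.

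The key step is to compute $R_i^{i+1}N_i^{i+1}(x_i)$. Since $C_{q^i}\unlhd C_{q^{i+1}}$, the norm--restriction (Mackey) formula for Tambara functors identifies this with $\prod_{\gamma}\gamma(x_i)$, the product taken over the Weyl group $C_{q^{i+1}}/C_{q^i}\cong C_q$ acting on $\pi_0^{C_{q^i}}L_{KU_{C_{q^k}}}S_{C_{q^k}} \cong A(C_{q^i})[x_i]/(x_i^2,2x_i)$. Each $\gamma$ is a ring automorphism fixing $A(C_{q^i})$ pointwise (conjugation in $C_{q^{i+1}}$ fixes every subgroup $C_{q^j}\leq C_{q^i}$), so it preserves the ideal $(x_i)$ — which is precisely the set of square-zero elements and is free of rank one over $A(C_{q^i})\otimes\F_2$ — and therefore sends the generator $x_i$ to a unit multiple of $x_i$; hence $R_i^{i+1}N_i^{i+1}(x_i)$ is a unit multiple of $x_i^q = 0$ (using $q\geq 2$). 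On the other hand $R_i^{i+1}(x_{i+1}) = x_i$, because by \cref{thm:main} together with the vanishing of $\mf J$ for cyclic $G$ the class $x$ is the global class in $\mf A[x]/(x^2,2x)$ and hence restricts compatibly. Combining these, and using that multiplication by $x_i$ embeds $A(C_{q^i})\otimes\F_2$ as the summand $x_i\cdot A(C_{q^i})$ of $\pi_0^{C_{q^i}}$, we get $R_i^{i+1}(b)\equiv 0\pmod 2$.

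Finally I would solve this congruence. Writing $b = \sum_{j=0}^{i+1}c_j y_j$ with $c_j\in\F_2$ in the basis $y_0,\dots,y_{i+1}=1$ of $A(C_{q^{i+1}})$, the relations $R_i^{i+1}(y_j) = q y_j \equiv y_j\pmod 2$ for $j\leq i$ and $R_i^{i+1}(y_{i+1}) = 1 = y_i$ give $R_i^{i+1}(b)\equiv \sum_{j=0}^{i-1}c_j y_j + (c_i+c_{i+1})y_i\pmod 2$; since the $y_j$ form an $\F_2$-basis of $A(C_{q^i})\otimes\F_2$, the vanishing forces $c_0=\dots=c_{i-1}=0$ and $c_i=c_{i+1}$, so $b\equiv c_i(1+y_i)$. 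As $N_i^{i+1}(x_i) = b\,x_{i+1}\neq 0$ by \cref{lemma:Balderrama}, we must have $c_i = 1$, giving $N_i^{i+1}(x_i) = x_{i+1}(1+y_i)$. I expect the main obstacle to be stating and applying the norm--restriction formula correctly and justifying the vanishing of $\prod_\gamma\gamma(x_i)$; the remainder is bookkeeping, where the one point needing care is keeping the generators $y_j$ of $A(C_{q^i})$ and of $A(C_{q^{i+1}})$ distinct.
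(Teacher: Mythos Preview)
Your proof is correct and follows essentially the same approach as the paper's: write $N_i^{i+1}(x_i)=b\,x_{i+1}$ via the nilpotence argument, use $R_i^{i+1}N_i^{i+1}(x_i)=x_i^q=0$ to pin $b$ down to $c(1+y_i)$ with $c\in\F_2$, and invoke \cref{lemma:Balderrama} to force $c=1$. The only difference is that you derive $R_i^{i+1}N_i^{i+1}(x_i)=0$ via the Mackey double-coset formula $\prod_{\gamma}\gamma(x_i)$ and an argument that each $\gamma(x_i)$ is a unit multiple of $x_i$; since $C_{q^{i+1}}$ is abelian the Weyl action is already trivial, so in fact $\gamma(x_i)=x_i$ and the product is literally $x_i^q$, which is what the paper writes directly.
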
    

\begin{proof}
    Since $x_i^2 = 0$ and $A(G)$ has no nilpotents, we know that
    \[
    N_i^{i+1}(x_i) = x_{i+1}\left( a_{i+1} + a_i y_i + a_{i-1}y_{i-1} \dots + a_0 y_0 \right).
    \]
    Here, the coefficients $a_j$ can be taken to be 0 or 1, as $2 x_{i+1}=0$.
    We then have
    \begin{align*}
        R^{i+1}_i N_i^{i+1}(x_i) 
        &= x_i \left( a_{i+1} + a_{i}q + a_{i-1}q y_{i-1} + \dots + a_0 q y_0 \right)\\
        &= x_i \left(a_{i+1} + a_i + a_{i-1}y_{i-1} + \dots + a_0 y_0 \right),
    \end{align*}
    where the last equality follows from the fact that $q$ is odd. 
    Since $R^{i+1}_i N_i^{i+1}(x_i) = x_i^q = 0$, 
    it follows that $a_{i-1} = a_{i-2} = \dots = a_0 = 0$ and $a_{i+1} + a_i \in 2 \mathbb Z$.
    But by \cref{lemma:Balderrama}, $N_i^{i+1}(x_i) \neq 0$, and so we must have $a_{i+1} = a_i = 1$.
    Thus
    $N_i^{i+1}(x_i) = x_{i+1}(1+y_i)$.
\end{proof}

\begin{bibdiv}
\begin{biblist}

\bib{ProbSess}{article}{
   label = {Ad},
   author={Adams, J. F.},
   title={Problem session for homotopy theory},
   conference={
      title={Algebraic topology},
      address={Arcata, CA},
      date={1986},
   },
   book={
      series={Lecture Notes in Math.},
      volume={1370},
      publisher={Springer, Berlin},
   },
   date={1989},
   pages={437--456},
   review={\MR{1000395}},
}

\bib{Araki}{article}{
   label = {Ar},
   author={Araki, Sh\^{o}r\^{o}},
   title={Equivariant stable homotopy theory and idempotents of Burnside
   rings},
   journal={Publ. Res. Inst. Math. Sci.},
   volume={18},
   date={1982},
   number={3},
   pages={1193--1212},
   issn={0034-5318},
   review={\MR{688954}},
   doi={10.2977/prims/1195183305},
}

\bib{At}{article}{
   label = {At},
   author={Atiyah, M. F.},
   title={Bott periodicity and the index of elliptic operators},
   journal={Quart. J. Math. Oxford Ser. (2)},
   volume={19},
   date={1968},
   pages={113--140},
   issn={0033-5606},
   review={\MR{228000}},
   doi={10.1093/qmath/19.1.113},
}

\bib{AtKthy}{book}{
   label={At2},
   author={Atiyah, M. F.},
   title={$K$-theory},
   series={Advanced Book Classics},
   edition={2},
   note={Notes by D. W. Anderson},
   publisher={Addison-Wesley Publishing Company, Advanced Book Program,
   Redwood City, CA},
   date={1989},
   pages={xx+216},
   isbn={0-201-09394-4},
   review={\MR{1043170}},
}

\bib{AtSe}{article}{
   author={Atiyah, Michael},
   author={Segal, Graeme},
   title={On equivariant Euler characteristics},
   journal={J. Geom. Phys.},
   volume={6},
   date={1989},
   number={4},
   pages={671--677},
   issn={0393-0440},
   review={\MR{1076708}},
   doi={10.1016/0393-0440(89)90032-6},
}

\bib{Balderrama}{article}{
   label={B},
    author = {Balderrama, William},
    title = {The {B}orel {$C_2$}-equivariant {$K(1)$}-local sphere},
   eprint={https://arXiv.org/abs/2103.13895},    
}

\bib{BalmerSanders}{article}{
   author={Balmer, Paul},
   author={Sanders, Beren},
   title={The spectrum of the equivariant stable homotopy category of a
   finite group},
   journal={Invent. Math.},
   volume={208},
   date={2017},
   number={1},
   pages={283--326},
   issn={0020-9910},
   review={\MR{3621837}},
   doi={10.1007/s00222-016-0691-3},
}

\bib{Barnes}{thesis}{
    label = {Ba},
    author = {Barnes, David},
    year = {2008},
    title = {Rational Equivariant Spectra},
    note = {PhD Thesis},
    eprint = {https://arXiv.org/abs/0802.0954},
}

\bib{BarnesKed}{article}{
   author={Barnes, David},
   author={Kedziorek, Magdalena},
   title={An introduction to algebraic models for rational $G$-spectra},
   conference={
      title={Equivariant topology and derived algebra},
   },
   book={
      series={London Math. Soc. Lecture Note Ser.},
      volume={474},
      publisher={Cambridge Univ. Press, Cambridge},
   },
   date={2022},
   pages={119--179},
   review={\MR{4327100}},
}

\bib{BBS}{article}{
   author={Barthel, Tobias},
   author={Berwick-Evans, Daniel},
   author={Stapleton, Nathaniel},
   title={Power operations in the Stolz--Teichner program},
   pages={48},
   status={accepted for publication in Geom. \& Topol.},
   eprint={https://arXiv.org/abs/2006.09943},
}

\bib{Bousfield}{article}{
    label = {Bo},
   author={Bousfield, A. K.},
   title={The localization of spectra with respect to homology},
   journal={Topology},
   volume={18},
   date={1979},
   number={4},
   pages={257--281},
   issn={0040-9383},
   review={\MR{551009}},
   doi={10.1016/0040-9383(79)90018-1},
}

\bib{Carrick}{article}{
  author = {Carrick, Christian},
  title = {Smashing Localizations in Equivariant Stable Homotopy},
  eprint={https://arxiv.org/abs/1909.08771} 
}

\bib{tomDieck}{book}{
   author={tom Dieck, Tammo},
   title={Transformation groups and representation theory},
   series={Lecture Notes in Mathematics},
   volume={766},
   publisher={Springer, Berlin},
   date={1979},
   pages={viii+309},
   isbn={3-540-09720-1},
   review={\MR{551743}},
}

\bib{Gr}{article}{
   author={Greenlees, J. P. C.},
   title={Equivariant forms of connective $K$-theory},
   journal={Topology},
   volume={38},
   date={1999},
   number={5},
   pages={1075--1092},
   issn={0040-9383},
   review={\MR{1688426}},
   doi={10.1016/S0040-9383(98)00047-0},
}

\bib{GrMa}{article}{
   author={Greenlees, J. P. C.},
   author={May, J. P.},
   title={Generalized Tate cohomology},
   journal={Mem. Amer. Math. Soc.},
   volume={113},
   date={1995},
   number={543},
   pages={viii+178},
   issn={0065-9266},
   review={\MR{1230773}},
   doi={10.1090/memo/0543},
}

\bib{Hill19}{article}{
   author={Hill, Michael A.},
   title={Equivariant chromatic localizations and commutativity},
   journal={J. Homotopy Relat. Struct.},
   volume={14},
   date={2019},
   number={3},
   pages={647--662},
   issn={2193-8407},
   review={\MR{3987553}},
   doi={10.1007/s40062-018-0226-2},
}

\bib{HHR}{article}{
   author={Hill, M. A.},
   author={Hopkins, M. J.},
   author={Ravenel, D. C.},
   title={On the nonexistence of elements of Kervaire invariant one},
   journal={Ann. of Math. (2)},
   volume={184},
   date={2016},
   number={1},
   pages={1--262},
   issn={0003-486X},
   review={\MR{3505179}},
   doi={10.4007/annals.2016.184.1.1},
}

\bib{HillMazur}{article}{
   author={Hill, Michael A.},
   author={Mazur, Kristen},
   title={An equivariant tensor product on Mackey functors},
   journal={J. Pure Appl. Algebra},
   volume={223},
   date={2019},
   number={12},
   pages={5310--5345},
   issn={0022-4049},
   review={\MR{3975068}},
   doi={10.1016/j.jpaa.2019.04.001},
}

\bib{HKBPO}{article}{
   author={Hu, Po},
   author={Kriz, Igor},
   title={The homology of $BPO$},
   conference={
      title={Recent progress in homotopy theory},
      address={Baltimore, MD},
      date={2000},
   },
   book={
      series={Contemp. Math.},
      volume={293},
      publisher={Amer. Math. Soc., Providence, RI},
   },
   date={2002},
   pages={111--123},
   review={\MR{1887531}},
   doi={10.1090/conm/293/04945},
}

\bib{KosLocal}{article}{
   author={Kosniowski, Czes},
   title={Localizing the Burnside ring},
   journal={Math. Ann.},
   volume={204},
   date={1973},
   pages={93--96},
   issn={0025-5831},
   review={\MR{364416}},
   doi={10.1007/BF01433407},
}

\bib{Lewis}{article}{
   label={Le},
   author={Lewis, L. G., Jr.},
   title={The theory of Green functors},
   year={1980},
   eprint={https://people.math.rochester.edu/faculty/doug/otherpapers/Lewis-Green.pdf}
}

\bib{LMS}{book}{
   author={Lewis, L. G., Jr.},
   author={May, J. P.},
   author={Steinberger, M.},
   author={McClure, J. E.},
   title={Equivariant stable homotopy theory},
   series={Lecture Notes in Mathematics},
   volume={1213},
   note={With contributions by J. E. McClure},
   publisher={Springer-Verlag, Berlin},
   date={1986},
   pages={x+538},
   isbn={3-540-16820-6},
   review={\MR{866482}},
   doi={10.1007/BFb0075778},
}

\bib{Liu}{article}{
   label={Li},
    author = {Liu, Yutao},
    title = {Universal Spaces and Splittings of Equivariant Spectra},
   eprint={https://arXiv.org/abs/2110.07695},    
}

\bib{Alaska}{book}{
   author={May, J. P.},
   title={Equivariant homotopy and cohomology theory},
   series={CBMS Regional Conference Series in Mathematics},
   volume={91},
   note={With contributions by M. Cole, G. Comeza\~{n}a, S. Costenoble, A. D.
   Elmendorf, J. P. C. Greenlees, L. G. Lewis, Jr., R. J. Piacenza, G.
   Triantafillou, and S. Waner},
   publisher={Published for the Conference Board of the Mathematical
   Sciences, Washington, DC; by the American Mathematical Society,
   Providence, RI},
   date={1996},
   pages={xiv+366},
   isbn={0-8218-0319-0},
   review={\MR{1413302}},
   doi={10.1090/cbms/091},
}

\bib{Rav}{article}{
   label = {Ra},
   author={Ravenel, Douglas C.},
   title={Localization with respect to certain periodic homology theories},
   journal={Amer. J. Math.},
   volume={106},
   date={1984},
   number={2},
   pages={351--414},
   issn={0002-9327},
   review={\MR{737778}},
   doi={10.2307/2374308},
}

\bib{Reiner}{book}{
   label = {Re},
   author={Reiner, I.},
   title={Maximal orders},
   series={London Mathematical Society Monographs. New Series},
   volume={28},
   note={Corrected reprint of the 1975 original;
   With a foreword by M. J. Taylor},
   publisher={The Clarendon Press, Oxford University Press, Oxford},
   date={2003},
   pages={xiv+395},
   isbn={0-19-852673-3},
   review={\MR{1972204}},
}

\bib{Ritter}{article}{
   label = {Ri},
   author={Ritter, J\"{u}rgen},
   title={Ein Induktionssatz f\"{u}r rationale Charaktere von nilpotenten
   Gruppen},
   language={German},
   journal={J. Reine Angew. Math.},
   volume={254},
   date={1972},
   pages={133--151},
   issn={0075-4102},
   review={\MR{470058}},
   doi={10.1515/crll.1972.254.133},
}

\bib{SchGlobal}{book}{
   label= {Sch},
   author={Schwede, Stefan},
   title={Global homotopy theory},
   series={New Mathematical Monographs},
   volume={34},
   publisher={Cambridge University Press, Cambridge},
   date={2018},
   pages={xviii+828},
   isbn={978-1-108-42581-0},
   review={\MR{3838307}},
   doi={10.1017/9781108349161},
}

\bib{Seg}{article}{
   label = {Seg1},
   author={Segal, Graeme},
   title={Equivariant $K$-theory},
   journal={Inst. Hautes \'{E}tudes Sci. Publ. Math.},
   number={34},
   date={1968},
   pages={129--151},
   issn={0073-8301},
   review={\MR{234452}},
}

\bib{Seg-perm}{article}{
   label = {Seg2},
   author={Segal, Graeme},
   title={Permutation representations of finite $p$-groups},
   journal={Quart. J. Math. Oxford Ser. (2)},
   volume={23},
   date={1972},
   pages={375--381},
   issn={0033-5606},
   review={\MR{322041}},
   doi={10.1093/qmath/23.4.375},
}

\bib{serre}{book}{
   label = {Ser},
   author={Serre, Jean-Pierre},
   title={Linear representations of finite groups},
   series={Graduate Texts in Mathematics, Vol. 42},
   note={Translated from the second French edition by Leonard L. Scott},
   publisher={Springer-Verlag, New York-Heidelberg},
   date={1977},
   pages={x+170},
   isbn={0-387-90190-6},
   review={\MR{0450380}},
}

\bib{Szymik}{article}{
   label = {Sz},
   author={Szymik, Markus},
   title={The chromatic filtration of the Burnside category},
   journal={Math. Proc. Cambridge Philos. Soc.},
   volume={154},
   date={2013},
   number={2},
   pages={287--302},
   issn={0305-0041},
   review={\MR{3021815}},
   doi={10.1017/S0305004112000527},
}

\bib{Wimmer}{article}{
author = {Wimmer, Christian},
title = {A model for genuine equivariant commutative ring spectra away from the group order},
eprint = {https://arXiv.org/abs/1905.12420},
}

\end{biblist}
\end{bibdiv}

\end{document}